\newcommand{\cal}{\mathcal}
\newcommand{\nats}{\mbox{$\mathbb N$}}
\newcommand{\ints}{\mbox{$\mathbb Z$}}
\newcommand{\power}{\mbox{$\mathbb P$}}
\newcommand{\spn}{{\mathop{\mathrm{Span}}\nolimits}}
\newcommand{\soc}{{\mathop{\mathrm{Soc}}\nolimits}}
\newcommand{\doc}{{\mathop{\partial\mathrm{oc}}\nolimits}}
\newcommand{\ann}{{\mathop{\mathrm{Ann}}\nolimits}}
\newcommand{\m}{{\mathop{\mathbf{m}}\nolimits}}
\newcommand{\q}{{\mathop{\mathbf{q}}\nolimits}}
\newcommand{\comment}[1]{}
\def\squarebox#1{\hbox to #1{\hfill\vbox to #1{\vfill}}}
\def\qed{\hspace*{\fill}
        \vbox{\hrule\hbox{\vrule\squarebox{.667em}\vrule}\hrule}\smallskip}
\theoremstyle{plain}
\newtheorem{lemma}{Lemma}[section]
\newtheorem{theorem}[lemma]{Theorem}
\newtheorem{corollary}[lemma]{Corollary}
\newtheorem{proposition}[lemma]{Proposition}
\theoremstyle{definition}
\newtheorem{claim}[lemma]{Claim}
\newtheorem{observation}[lemma]{Observation}
\newtheorem{definition}[lemma]{Definition}
\newtheorem{convention}[lemma]{Convention}
\newtheorem{question}[lemma]{Question}
\newtheorem{example}[lemma]{Example}
\newtheorem*{rmk*}{Remark}
\newtheorem*{rmks*}{Remarks}
\newtheorem*{conventions*}{Conventions}
\newtheorem*{convention*}{Convention}
\def\squareforqed{\hbox{\rlap{$\sqcap$}$\sqcup$}}
\def\qed{\ifmmode\squareforqed\else{\unskip\nobreak\hfil
\penalty50\hskip1em\null\nobreak\hfil\squareforqed
\parfillskip=0pt\finalhyphendemerits=0\endgraf}\fi}
\newlength{\tablength}
\newlength{\spacelength}
\newcommand{\tabstar}{\hspace*{\tablength}}
\newcommand{\spacestar}{\hspace*{\spacelength}}
\def\obeytabs{\catcode`\^^I=\active}
{\obeytabs\global\let^^I=\tabstar}
{\obeyspaces\global\let =\spacestar}
\newenvironment{display}{\begingroup\obeylines\obeyspaces\obeytabs}{\endgroup}
\newenvironment{prog}{\begin{display}\parskip0pt\sf}{\end{display}}
\newcommand{\ra}{\rightarrow}
\newcommand{\into}{\hookrightarrow}
\DeclareMathOperator{\im}{im}
\DeclareMathOperator{\Ass}{Ass}
\author{Geir Agnarsson}
\address{Department of Mathematical Sciences \\ 
George Mason University \\ Fairfax, VA  22030}
\email{geir@math.gmu.edu}
\author{Neil Epstein}
\address{Department of Mathematical Sciences \\ 
George Mason University \\ Fairfax, VA  22030}
\email{nepstei2@gmu.edu}
\title{}
\title{On posets, monomial ideals, Gorenstein ideals and their combinatorics}
\subjclass[2010]{13B25, 13P10, 06A07}
\keywords{polynomial ring,
  inverse System (IS),
  socle,
  artinian ideal}
\date{\today}
\begin{document}

\begin{abstract}
In this article we first compare the set of elements in the socle of an ideal of
a polynomial algebra $K[x_1,\ldots,x_d]$ over a field $K$ that are not
in the ideal itself with Macaulay's inverse systems of such polynomial algebras
in a purely combinatorial way for monomial ideals, and then develop 
some closure operational properties for the related poset
${{\nats}_0^d}$. We then derive some algebraic propositions of $\Gamma$-graded
rings that then have some combinatorial consequences.
Interestingly, some of the results from this part that uniformly hold for polynomial
rings are always false when the ring is local. 
We finally delve into some direct computations, in relation to a given term
order of the monomials, for general zero-dimensional 
Gorenstein ideals, and we deduce a few explicit observations and results for the inverse
systems from some recent results about socles.
\end{abstract}

\maketitle

\section{Introduction}
\label{sec:intro}

This paper was originally inspired, in part, by some specific questions raised
about what
is similar and what is dissimilar between monomial ideals obtained from
a given antichain of monomials and monomial ideals obtained from their
inverse systems~\cite{Dall}. Most of these questions had known answers, 
but they still spun some novel observations and further
thoughts worth noticing and jotting down. This paper contains these very observations.  

By viewing a given antichain of monomials as generators of the socle of a monomial ideal 
that are not contained in the ideal itself, one can obtain a variety of
combinatorial duality properties purely from a lattice and poset point of 
view. This was done in~\cite{A-Epstein}.
The socle of a commutative ring has been used in a variety
of algebraic ways in the studies of local rings, Cohen-Macaulay rings,
and Gorenstein rings, to name a few~\cite{Bruns-Herzog,Eisenbud,MonAlg}.
This paper is a further study of the socle of an ideal, both from combinatorial
and algebraic points of view.  It can be viewed as a continuation or as an extended 
addendum to~\cite{A-Epstein}. As such, the sections of this paper are largely 
independent, but they all share the common theme of the socle of an ideal, 
particularly that of a monomial ideal. Hence, the focus of the paper is to
demonstrate the interplay between the discrete and combinatorial setting (monomial ideals) and the algebraic setting (inverse systems and general
artinian ideals).

Despite providing an interesting dictionary between commutative algebra and 
the study of differential equations, Macaulay's original inverse system cannot be considered a mainstream modern topic in commutative algebra (though a variant, \emph{Matlis duality}, is important there), let 
alone in combinatorics and the the study of finite posets. In 
algebraic geometry, a modernized version of the inverse system has seen 
some general widely used applications, in particular  providing a Matlis-Macaulay
duality between certain ideals and modules.  This paper is however not 
about algebraic geometry, but rather, and in part, on some related observations regarding 
combinatorial properties of monomial ideals (see for example Observation~\ref{obs:I=J} 
and Corollary~\ref{cor:summary}) and closure operators on them with restricted poset structure (see for example
Theorem~\ref{thm:sqsubseteq-closure-op} and its Corollary~\ref{cor:closure}).
It is also about when these combinatorial properties can be extended to ideals
of polynomial rings in general, and when it makes sense to view them as inverse ideals
of elements in their socles that are not in the ideals 
(see for example Corollary~\ref{cor:uniqueupsetdecomp}, which shows that a corresponding
statement is false for general local rings, and Proposition~\ref{prp:P} and its
Corollary~\ref{cor:ann-iff-monomial}.)
In particular, the {\em docle} of a monomial ideal $I$, denoted $\doc(I)$, 
containing the generators of the
socle of the monomial ideal that are not in the ideal itself
(see Definition~\ref{def:doc(I)})
is a well-defined set for monomial ideals, but not so for ideals of polynomial rings
in general.
For a monomial ideal one can therefore ask:
(a) is the docle $\doc(I)$ in bijective correspondence 
with the generators of the inverse system (IS) for the ideal? Our first objective is to
give a combinatorial and
elementary proof that this is indeed the case in Corollary~\ref{cor:summary}. 
We suspect this to be known, but since we have not seen it stated in a clear way
we begin by providing an elementary proof. A second
question is then:
(b) When does the docle $\doc(I)$ of a non-monomial ideal make sense?
And, when it does make sense, what is the best way to define it? We will see in 
Section~\ref{sec:Gor-inv} that we need to fix a term order, in our case we will
choose the {\em lexicographical} term order (LEX), in order for the docle of any
ideal to be well-defined. This allows us to state our main results Proposition~\ref{prp:P} and its
Corollary~\ref{cor:ann-iff-monomial} in this section.

In order to discuss how our treatment of monomial ideals and their
corresponding posets relate to inverse systems, we need to briefly 
discuss some fundamental definitions and facts about Macaulay's inverse 
systems (IS). We will here, for the most part, use the presentation of the 
inverse system as given in the Lecture notes by Stephan Stolz~\cite{Stolz},
mainly since the treatment there is the most direct and compatible
to our poset discussions that follow. In addition, the treatment there
is close to that of the original theme of Macaulay of treating differential
equations~\cite{Macaulay} (for which the article~\cite{Pommaret} contains many relevant discussions and references).

In what follows we mention a few works, albeit marginally relevant
to our discussion at best, that have influenced the style of writing about IS in this
current article. As stated in~\cite{Stolz}, the main motivation for
writing his lecture notes was the work of Iarrobino and Emsalem who
utilized Macaulay's IS in part:
(i) to study $0$-dimensional subschemes of ${\power}^n$,
(ii) to translate certain bounds on Hilbert functions to vanishing problems of 
inverse systems, and 
(iii) to study the Waring problem (of determining for each given natural number $k$ the smallest 
natural number $s(k)$ such that every natural number can be written as a sum 
of at most $s(k)$ $k$-th powers of natural numbers) 
in~\cite{Iarrobino-Emsalem-I}, \cite{Iarrobino-III} and~\cite{Iarrobino-II} respectively. 
Cho and Iarrobino further studied $0$-dimensional subschemes of ${\power}^n$ 
using IS in~\cite{Cho-Iarrobino}. More related to our work here is 
\cite{VanTuyi-Zanello}, where the authors utilize  
Macaulay's IS to compute the socle of an artinian algebra that is related to 
a given simplicial complex. This directly relates to the motivating work
for this current paper and~\cite{A-Epstein}, namely to the work of Anna-Rose Wolff in her 
MS Thesis~\cite{Anna-Rose-Thesis} where she analyzed the survival complex of $R/I$, 
where $I$ is a zero-dimensional monomial ideal. There the {\em survival complex} is a 
simplicial complex whose vertices are the monomials of $R$ that are not in $I$, where 
a simplex consists of a set of monomials whose product is not in $I$. She 
showed~\cite[18, Prop.~2.2.1]{Anna-Rose-Thesis}
that the truly isolated points of that complex correspond to the monomial basis of the socle 
of $R/I$, that is the docle $\doc(I)$. 

Other useful applications of Macaulay's IS can be found in the literature on
algebraic geometry, For example, a clever use of IS in classical algebraic 
geometry is applied in~\cite{Park-Stanley-Zanello}. Using a well-known 
one-to-one inclusion reversing correspondence, also known as
{\em Macaulay's correspondence} or 
{\em Macaulay-Matlis duality}~\cite[Thm 21.6]{Eisenbud} one can describe
artinian (zero-dimensional) Gorenstein algebras over fields, something we
will come back to later on in this article. This correspondence is 
used to present structure theorems for certain artinian Gorenstein 
algebras~\cite{Elias-Rossi-Isom}. These same authors extend their studies,
using IS, in~\cite{Elias-Rossi-IS-Gorenstein}. It should be noted that the
notion of Macaulay's IS given in~\cite[p.~526]{Eisenbud} is not the classical
one, but rather a polished equivalent version of it that relies less on the
characteristic of the ground field $K$ of the $K$-algebras in question. 
As mentioned above, and as we will see shortly, we have instead chosen to use the 
classical presentation of Macaulay's IS as differential operators.
Our main use of Macaulay's IS will be to obtain an if and 
only if statement in Corollary~\ref{cor:ann-iff-monomial}, the proof of 
which depends on the classical version of Macaulay's IS.

The rest of this article is organized as follows:

In Section~\ref{sec:defs}, we recall some basic concepts for posets and state
a few claims and observations that we will use later in the paper. We then
define the inverse system (IS) in the way we will be using it in the paper,
as well as the dual concept of the inverse ideal. 

In Section~\ref{sec:socle-IS}, we discuss monomial ideals in $R = K[x_1,\ldots,x_d]$,
the monoid $[x_1,\ldots,x_d]$, and the socle of a monomial ideal of $R$, all from a
combinatorial lattice point of view. Given a set $M$ of non-comparable monomials
of $R$, we start by demonstrating in a combinatorial and elementary way that the 
inverse ideal $I$ corresponding to $M$ is the unique 
zero-dimensional monomial ideal $I$ with its docle $\doc(I) = M$ 
(see Observation~\ref{obs:I=J}). We then derive in
a purely combinatorial way some dual properties of the inverse system and inverse ideals.
These dual properties have algebraic analogies. 

In Section~\ref{sec:closure-op}, we discuss a sort of closure
operator on monomial ideals, or equivalently on upsets of a corresponding poset,
be it the multiplicative monoid $[x_1,\ldots,x_d]$ or the additive monoid 
${\nats}_0^d$, and we derive some of its properties. 

In Section~\ref{sec:alg-observations}, we present some properties of the docle
of a monomial ideal that are derived in purely algebraic ways. In particular,
we consider a $\Gamma$-graded ring $R$ such that the ideal generated by the
homogeneous non-units is not all of $R$ and 
we present
an algebraic proof of the fact that if $I\subseteq J$ are zero-dimensional monomial
ideals with $\doc(I)\subseteq\doc(J)$ then $I = J$ 
(see Proposition~\ref{prp:idealanddoc}). This has then a purely combinatorial consequence 
for upsets in a poset $P$ in Corollary~\ref{cor:poset-idealanddoc}. We then prove an
algebraic generalization of what we know holds for monomial ideals with common socle
in Theorem~\ref{thm:expandtomprimary}, which then allows us to present a given upset 
$U$, with a non-empty $\doc(U)$, in a unique way as an intersection $U = V\cap W$ of 
two upsets where $\doc(W)$ is empty and $V$ is {\em cofinite} (that is $P\setminus V$ 
is finite and corresponds to a zero-dimensional monomial ideal, see next section) 
in Corollary~\ref{cor:uniqueupsetdecomp}. Finally,
it is interesting to note that 
the \emph{uniqueness} of
the mentioned intersection in Corollaries~\ref{cor:uniqueidealdecomp} 
and~\ref{cor:uniqueupsetdecomp} is \emph{always 
false} in the case where the ring is local \cite{HRS-emb1}.

In Section~\ref{sec:Gor-inv}, we use a known form for homogeneous zero-dimensional 
Gorenstein ideals of the polynomial ring $R = K[x_1,\ldots,x_d]$ to first
prove some observations about such ideals, where we we heavily utilize 
computations with respect to the given term order LEX. This allows us to obtain
some novel explicit results on the presentations of such a homogeneous zero-dimensional
Gorenstein ideal, as the inverse ideal of an explicit polynomial we call
the {\em $d;k$-antipodal polynomial}, in Proposition~\ref{prp:P}. This, in return,
will yield an if-and-only-if statement on when exactly we have the property
from Observation~\ref{obs:I=J} for homogeneous zero-dimensional Gorenstein 
ideals: namely the ideal is the inverse ideal of the sole element of its
docle if and only if the ideal is monomial. 

In Section~\ref{sec:universal}, we show that the form
of our homogeneous zero-dimensional Gorenstein ideal of the polynomial ring
$R = K[x_1,\ldots,x_d]$ presented in Section~\ref{sec:Gor-inv} has a more 
universal form in terms of analytic functions and not merely polynomials.
That is, the inverse ideal which
yields the given homogeneous zero-dimensional Gorenstein ideal can be 
chosen relatively freely, as described in Proposition~\ref{prp:any-f}.

Finally, in Section~\ref{sec:some-q}, we present a brief summary and present
a few questions for further study.

\section{Basic definitions and properties}
\label{sec:defs}

The set of integers will be denoted by $\ints$, the
set of positive natural numbers by $\nats$, and the set of
non-negative integers $\nats\cup\{0\}$ by ${\nats}_0$. For each
$n\in\nats$ we let $[n] := \{1,\ldots,n\}$. 

For a poset $(P,\leq)$ recall that $C\subseteq P$
is a {\em chain} if $C$ forms a linearly or totally ordered
set within $(P,\leq)$. A subset $N\subseteq P$ is an {\em antichain}
if no two elements in $N$ are comparable
in $(P,\leq)$. We call a subset $U\subseteq P$ an {\em upset} of $P$ if $x\geq y\in U \Rightarrow x\in U$.
We call a subset $D\subseteq P$ a {\em downset} of $P$ if $x\leq y\in D \Rightarrow x\in D$.
These definitions directly imply the following.
\begin{claim}
\label{clm:up-down}
If $P$ is a poset with a set partition $P = U\cup D$,
then $U$ is an upset if and only if $D$ is a downset.
\end{claim}
For a subset $A\subseteq P$, 
let $U(A) := \{x\in P : x\geq a \mbox{ for some } a\in A\}$
be the {\em upset generated by $A$}, and 
$D(A) := \{x\in P : x\leq a \mbox{ for some } a\in A\}$
the {\em downset generated by $A$}. If $A = \{a_1,\ldots,a_n\}$
is finite, then we will write $U(a_1,\ldots,a_n)$ (resp.~$D(a_1,\ldots,a_n)$)
for $U(A)$ (resp.~$D(A)$) and say that $U$ (resp.~D) is {\em finitely generated}.

A poset $P$ satisfies the {\em ascending chain condition (ACC)} if there 
is no infinite sequence $U_1\subset  U_2 \subset U_3 \subset \cdots$ of upsets in $P$. 
Similarly to the condition of a ring being Noetherian we have the following.
\begin{lemma}
 \label{lmm:ACC}
A poset $(P,\leq)$ satisfies the ACC if and only if each 
upset is finitely generated.
\end{lemma}
For the convenience of the reader we include a proof:
\begin{proof}
Suppose each upset in $P$ is finitely generated and let
$U_1\subseteq U_2\subseteq U_3\subseteq \cdots$ be a chain of upsets in $P$. 
Then the union $U = \bigcup_{i\geq 1} U_i$ is an upset in $P$ and 
therefore finitely generated, say $U = U(a_1,\ldots,a_n)$. It follows that there is an 
$m$ such that $\{a_1,\ldots,a_n\}\subseteq U_m$ in which case we have 
$U_m = U_{m+1} = U_{m+2} = \cdots$ which implies that that the chain is 
stationary.

Conversely, suppose $U$ is an upset in $P$ that is not finitely generated.
In this case there is a sequence $(a_i)_{i\geq 1}$
of elements in $U$ such that $U(a_1)\subset U(a_1,a_2)\subset U(a_1,a_2,a_3)\subset\cdots\subset U$
is a strictly increasing sequence of upsets in $U$ and hence in $P$. 
\end{proof}

Let $P$ be a poset satisfying the ACC and $U$ an upset of $P$. Since
$U$ is finitely generated, say $U = U(a_1,\ldots,a_n)$, we can assume the
set $\{a_1,\ldots,a_n\}$ to be minimal in the sense that no proper subset
of this set generates $U$. In this case, $\{a_1,\ldots,a_n\}$ forms an antichain in $P$.
Hence, we can always assume the generators of an upset form an antichain in $P$ if
it satisfies the ACC. Clearly, this antichain is uniquely determined by $U$ and consists
of the minimal elements of $U$.

We say that set $X\subseteq P$ is {\em cofinite} if $P\setminus X$ is
a finite subset of $P$. So, an upset $U$ of $P$ is cofinite if and only if 
the downset $D = P\setminus U$ is finite. In that case, $D$ is clearly also
finitely generated, say $D = D(a_1,\ldots,a_n)$, and assuming the generating set
$\{a_1,\ldots,a_n\}$ is minimal, it will form an antichain. This antichain
is uniquely determined by the downset $D$ as it consists of the maximal elements
of $D$.

Let $R = K[x_1,\ldots,x_d]$ denote the polynomial ring
over a field $K$ in $d$ variables. By the {\em socle} of an 
ideal $I\subseteq R$ with respect to the maximal ideal 
$\m = (x_1,\ldots,x_d)$ of $R$, we will mean the ideal 
\[
\soc(I) = \soc_{\m}(I) := (I:\m) = \{f\in R: x_if\in I \mbox{ for each }i\}.
\]
Note that for a monomial ideal $I\subseteq R$, the set
$\soc(I)\setminus I$ contains monomials $a\in R$ such that
(i) $a\not\in I$ and (ii) $x_ia\in I$ for every $i\in[d]$,
and vice versa, if a monomial $a\in R$ satisfies these two
conditions, then $a\in\soc(I)\setminus I$.
\begin{definition}
\label{def:doc(I)}
For a monomial ideal $I$, let $\doc(I)$ denote the {\em docle} of $I$, defined to be the set of monomials in $\soc(I)\setminus I$.
\end{definition}

Note that ${\ints}^d$ has a natural partial order $\leq'$
where for $\tilde{a} = (a_1,\ldots,a_d)$ and $\tilde{b} = (b_1,\ldots,b_d)$
we have $\tilde{a}\leq'\tilde{b} \Leftrightarrow a_i\leq b_i$ for
each $i\in \{1,\ldots,d\}$. We will from now on use ``$\leq$'' for this 
partial order ``$\leq'$''.
The usual componentwise addition $+$ that makes $({\ints}^d,+)$ an
abelian group respects this partial order $\leq$. The map 
$\tilde{p} = (p_1,\ldots,p_d)\mapsto x_1^{p_1}\cdots x_d^{p_d}
= {\tilde{x}}^{\tilde{p}}$ is an isomorphism between
the additive monoid ${\nats}_0^d$ and the multiplicative
monoid $G = [x_1,\ldots,x_d]$. This map is also
an order isomorphism from the lattice $({\nats}_0^d,\leq)$
to $[x_1,\ldots,x_d]$ as a lattice given by divisibility and
so it is an order preserving monoid isomorphism.
Via this isomorphism
we have a bijective correspondence between monomial ideals of $R$ and
upsets of the lattice ${\nats}_0^d$.

Note that the elements in the docle $\doc(I)$ (see Definition~\ref{def:doc(I)}) are 
the maximal monomials from $G$ with respect to divisibility that are not in $I$,
that is $\doc(I) = \max(G\setminus I)$. With these motivating terminologies 
and definitions from algebra, the corresponding definitions for lattices
and posets are natural. For a poset $P$ and an upset $U\subseteq P$ we let
$\doc(U) := \max(P\setminus U)$. 
By Claim~\ref{clm:up-down}, $P\setminus D(M)$ is an upset of $P$. 
The following is also clear.

\begin{claim}
\label{clm:M=doc}
If $P$ is a poset and $U\subseteq P$ is an upset then
(i) $M = \doc(U)$ is an antichain of $P$,
(ii) $D(M)\subseteq P\setminus U$ are both downsets of $P$, and
(iii) $\max(D(M)) = M = \max(P\setminus U)$.
\end{claim}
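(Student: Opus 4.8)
The plan is to verify the three parts in sequence, using only the definitions of $\doc$, the generated downset $D(\cdot)$, upsets and downsets, together with Claim~\ref{clm:up-down}. No finiteness or chain condition on $P$ is needed.

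For (i), I would simply note that $M = \doc(U) = \max(P\setminus U)$ is by definition a set of maximal elements of a subset of $P$, hence an antichain: if $m \le m'$ with $m, m' \in M$, then maximality of $m$ in $P\setminus U$, together with $m' \in P\setminus U$, forces $m = m'$. This is a one-line observation.

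For (ii), the set $D(M)$ is a downset straight from its definition, since $x \le y$ and $y \le a$ for some $a \in M$ give $x \le a$; and $P\setminus U$ is a downset by Claim~\ref{clm:up-down} applied to the set partition $P = U \cup (P\setminus U)$ in which $U$ is an upset. The inclusion $D(M) \subseteq P\setminus U$ then follows because $M \subseteq P\setminus U$ (a maximal element of a set belongs to the set), so for any $x \le m$ with $m \in M$ the downset property of $P\setminus U$ puts $x \in P\setminus U$ as well; equivalently, $P\setminus U$ is a downset containing $M$ and hence contains the downset $D(M)$ generated by $M$.

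For (iii), the equality $M = \max(P\setminus U)$ is exactly the definition of $\doc(U)$, so the only real content is $\max(D(M)) = M$. Here I would invoke part (i): given $m \in M$, we have $m \in D(M)$, and any $x \in D(M)$ with $m \le x$ satisfies $x \le m'$ for some $m' \in M$, so $m \le m'$; the antichain property then gives $m = m'$ and hence $x = m$, showing $m \in \max(D(M))$. Conversely, any $y \in \max(D(M))$ has $y \le m$ for some $m \in M \subseteq D(M)$, and maximality of $y$ forces $y = m \in M$. The whole argument is routine bookkeeping; the only point deserving a moment's care is that the antichain property of part (i) is precisely what collapses the chain $m \le x \le m'$, so the three parts are best proved in this order.
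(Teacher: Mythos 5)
Your proof is correct, and it matches the paper's (implicit) reasoning: the paper simply asserts the claim is ``clear'' and gives no written argument, and your routine verification -- (i) from the definition of $\max$, (ii) from Claim~\ref{clm:up-down} together with $M\subseteq P\setminus U$, (iii) from the antichain property of (i) collapsing any chain $m\le x\le m'$ -- is exactly the standard filling-in that ``clear'' invites. No gaps.
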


\begin{rmk*}
A subset $A\subseteq G = [x_1,\ldots,x_d]$ of monomials
generates an upset $U(A)$ of the monoid $G$, when $G = (G,|)$ is
viewed as the lattice given by divisibility (which is, as we just mentioned,
order isomorphic to the lattice $({\nats}_0^d,\leq)$), and it also generates
a monomial ideal
$I(A)$ of $R = K[x_1,\ldots,x_d]$. With our conventions above the sets
$\doc(U(A))\subseteq G$ and $\doc(I(A))\subseteq R$ are equal sets dead on;
they contain the same set of monomials from $G\subseteq R$. This lattice
$(G,|)$, from the multiplicative monoid $G$, was first studied by
Gasharov, Peeva and Welker in~\cite{Gasharov-Peeva-Welker}
\end{rmk*}

We briefly describe {\em Macaulay's inverse system (IS)} in modern
notation that we will be using. As mentioned earlier, we will mostly follow 
the notation of Stephan Stolz in his Lecture Notes in the section on the 
inverse system~\cite{Stolz} which
is modern, yet contains most of the original flavor.

Let $R = K[x_1,\ldots,x_d]$ and $S = K[y_1,\ldots,y_d]$ be polynomial 
rings, over a field $K$ of characteristic zero, on disjoint sets 
of variables of the same cardinality $d$. 
The ring $R$ acts on $S$ in a $K$-linear fashion
where $x_i\circ y_j := (\partial/\partial y_i)y_j = \delta_{i\/j}$,
the Kronecker delta function, and where this action extends to 
all of $R$ in the natural way, using the standard formal
method of differentiation. This action makes $S$ into an $R$-module. As both $R = \bigoplus_{n\geq 0}R_n$  and
$S = \bigoplus_{n\geq 0}S_n$ are graded $K$-algebras, we then get
a restricted action of $R_i \times S_j \rightarrow S_{j-i}$ for each pair $i,j$. 
This action of $R$ on $S$ satisfies the following
\begin{equation}
\label{eqn:xoy}
\tilde{x}^{\tilde{p}}\circ \tilde{y}^{\tilde{q}} = \left\{
\begin{array}{ll}
0 & \mbox{ if } \tilde{p}\not\leq\tilde{q}, \\
\prod_{i=1}^d\frac{(q_i)!}{(q_i-p_i)!}\tilde{y}^{\tilde{q}-\tilde{p}}
 & \mbox{ if } \tilde{p}\leq\tilde{q}.
\end{array}
\right.
\end{equation}

\begin{example}
  \label{exa:diff}
Consider the case $d=2$. Since $f = x_2^2 - x_1x_2\in K[x_1,x_2]$ is
homogeneous of degree $2$ and $g' = y_1^2y_2\in S = K[y_1,y_2]$ homogeneous
of degree $3$, we see that $(f,g')\in R_2 \times S_3$ and
$(f,g')\mapsto f\circ g'\in S_1$. According to the rules of differentiation,
\[
f\circ g' =  (x_2^2 - x_1x_2)\circ(y_1^2y_2)
= \left(\left(\frac{\partial}{\partial y_2}\right)^2 -
\left(\frac{\partial}{\partial y_1}\right)
\left(\frac{\partial}{\partial y_2}\right)\right)y_1^2y_2
= -2y_1,
\]
\end{example}

For a subset $X\subseteq R$ let 
$\ann(X) = \{g'\in S : f\circ g' = 0 \mbox{ for each } f\in X\}$
be the set of elements of $S$ that are annihilated by each element 
in $X$. If $I = I(X)$ is the ideal of $R$ generated by $X$, then clearly
$\ann(X) = \ann(I(X))$. The set $\ann(X)$ is an $R$-submodule of $S$.
Likewise, for a subset $X'\subseteq S$ let
$\ann(X') = \{f\in R : f\circ g' = 0 \mbox{ for each } g'\in X'\}$
be the set of elements in $R$ that annihilate each element 
in $X'$. If $R(X') = R\circ X'$ is the $R$ submodule of $S$ generated by $X'$
then clearly $\ann(X') = \ann(R(X'))$. The set $\ann(X')$ is
an ideal of $R$.

\begin{convention}
\label{conv:apostrophe}
As indicated in the above paragraph, we will, when
appropriate, denote the elements, subsets or ideals of $R$ by $X$,
with no apostrophe, and denote the corresponding elements, subsets or submodules of
$S$ by $X'$, with an apostrophe. Many times, if $X$ is given, then $X'$
will denote the corresponding subset in $S$ by replacing each $x_i$ in $X$ by
the variable $y_i$.
\end{convention}

\begin{definition}
\label{def:IS}
For an ideal $I\subseteq R$, we let $I^{-1} := \ann(I)\subseteq S$ 
and call it the {\em inverse system}\footnote{Sometimes in the 
  literature $I$ is assumed to be a homogeneous ideal when the inverse system
  is defined. We should mention that the notion of the inverse ideal is a non-conventional name
for the dual notion of the inverse system.} of $I$. Dually, for
an $R$-submodule $A'$ of $S$ we let ${A'}^{-1} := \ann(A')\subseteq R$
and call it the {\em inverse ideal} of $A'$.
\end{definition}

\begin{rmk*}
We should note that sometimes, as
in~\cite[p.~526]{Eisenbud}, Macaulay's IS is defined as an
action of  $R = K[x_1,\ldots,x_d]$ on $R^{(-1)} := K[x_1^{-1},\ldots,x_d^{-1}]$ by the usual
multiplication of monomials if the result actually lies in $R^{-1}$ 
and zero otherwise:
\begin{equation}
\label{eqn:yoy}
\tilde{x}^{\tilde{p}}\circ (\tilde{x}^{-1})^{\tilde{q}} = \left\{
\begin{array}{ll}
0 & \mbox{ if } \tilde{p}\not\leq\tilde{q}, \\
(\tilde{x}^{-1})^{\tilde{q}-\tilde{p}}
 & \mbox{ if } \tilde{p}\leq\tilde{q}.
\end{array}
\right.
\end{equation}
The action in (\ref{eqn:yoy}) is the same action as in (\ref{eqn:xoy})
except that the coefficients have been removed.
Although (\ref{eqn:yoy}) is
not the explicit differential action, it is more compatible
with ordinary $K$-algebras, as opposed to those with a divided power structure
added.
\end{rmk*}

\section{The socle and the inverse system for monomial ideals}
\label{sec:socle-IS}

For monomial ideals of $R = K[x_1,\ldots,x_d]$ it is particularly
simple to describe their inverse systems in $S$, and vise versa, 
to obtain the ideal in $R$ from 
a given set of monomials in $S$ when viewed as the inverse system of 
an ideal. The inverse system in $S$ of an ideal $I$ is generated as an $R$-module by a set of monomials 
if and only if $I$ is a monomial ideal. This is in great part 
because we can view monomials 
purely from the point of a monoid $G = [x_1,\ldots,x_d]$, with its  
lattice structure where the partial order is given by divisibility.
This will be the main approach in this section.

With Definition~\ref{def:doc(I)} in mind, consider the following 
theorem from~\cite{A-Epstein}.

\begin{theorem}
\label{thm:eye-catcher}
For any non-empty set $M$ of non-comparable monomials  of
$R = K[x_1,\ldots,x_d]$ and a monomial $m$ of $R$ with  
$M\subseteq (m)$, there is a unique monomial ideal 
$I\subseteq (m)$ with $\doc(I) = M$ and $(m)/I$ finite
dimensional as a vector space over $K$.

In particular, for any non-empty set $M$ of non-comparable monomials  of
$R = K[x_1,\ldots,x_d]$, there is a unique zero-dimensional monomial ideal
$I$ of $R$ with $\doc(I) = M$.
\end{theorem}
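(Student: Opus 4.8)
The statement has two parts: existence and uniqueness of a monomial ideal $I\subseteq(m)$ with $\doc(I)=M$ and $\dim_K (m)/I<\infty$, plus the ``in particular'' case where $m=1$ and ``$(m)/I$ finite-dimensional'' becomes ``$I$ zero-dimensional.'' The plan is to work entirely on the combinatorial side, using the order isomorphism between $[x_1,\ldots,x_d]$ and $({\nats}_0^d,\le)$, so that a monomial ideal $I\subseteq(m)$ corresponds to an upset $U$ of ${\nats}_0^d$ contained in the principal upset $U(\tilde m)$, and $\doc(I)$ corresponds to $\max({\nats}_0^d\setminus U)$. Since the second (zero-dimensional) assertion is just the case $\tilde m=\tilde 0$, I will prove the first assertion and then observe that the finiteness of $(m)/I$ is equivalent to $U(\tilde m)\setminus U$ being finite, which in the case $\tilde m = \tilde 0$ says $\nats_0^d \setminus U$ is finite, i.e.\ $I$ is zero-dimensional.

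\textbf{Existence.} Given the antichain $M$ (equivalently an antichain $\tilde M\subseteq U(\tilde m)$ in $\nats_0^d$), I would simply \emph{define} the candidate ideal via its complement downset: let $D := D(\tilde M)$ be the downset generated by $\tilde M$ inside $\nats_0^d$, and set $U := U(\tilde m)\setminus D$. I would first check $U$ is an upset of $\nats_0^d$: if $x\ge y\in U$ then $x\ge\tilde m$ (as $y\ge\tilde m$), and $x\notin D$ since $x\le$ some element of $\tilde M$ would force $y\le$ that same element, contradicting $y\notin D$; hence $x\in U$. Then I must verify $\doc(U)=\max(\nats_0^d\setminus U)=\tilde M$. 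Here one uses that $\nats_0^d\setminus U = D \cup (\nats_0^d\setminus U(\tilde m))$, and that every element of this set is $\le$ some element of $\tilde M$ (elements of $D$ by definition; elements not in $U(\tilde m)$ because $M\subseteq(m)$ means each $\tilde a\in\tilde M$ satisfies $\tilde a\ge\tilde m$, and one checks any $x\not\ge\tilde m$ lies below some $\tilde a$ — this is the one spot needing a small argument, using coordinatewise comparison and $\tilde a \ge \tilde m$). Thus the maximal elements of $\nats_0^d\setminus U$ are exactly the maximal elements of $\tilde M$, which is $\tilde M$ itself since $\tilde M$ is an antichain. Finiteness of $U(\tilde m)\setminus U = D\cap U(\tilde m)$ holds because $D$ is generated by the finite antichain $\tilde M$, hence $D$ is a finite union of boxes $\{x : \tilde 0\le x\le\tilde a\}$, each finite.

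\textbf{Uniqueness.} Suppose $I,J\subseteq(m)$ are monomial ideals with $\doc(I)=\doc(J)=M$ and both $(m)/I$, $(m)/J$ finite-dimensional; let $U_I,U_J$ be the corresponding upsets. I would show $U_I=U_J$ by showing each is determined by $M$ and $\tilde m$. The key point is: if $U\subseteq U(\tilde m)$ is an upset with $U(\tilde m)\setminus U$ finite and $\max(\nats_0^d\setminus U)=\tilde M$, then $U = U(\tilde m)\setminus D(\tilde M)$. One inclusion is easy: $U\cap D(\tilde M)=\emptyset$ because any point in $D(\tilde M)\setminus U(\tilde m)$ is automatically outside $U$, while a point $x\in D(\tilde M)\cap U(\tilde m)$ lies below some $\tilde a\in\tilde M\subseteq\nats_0^d\setminus U$, so $x\in U$ would contradict $U$ being an upset — hence $U\subseteq U(\tilde m)\setminus D(\tilde M)$. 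For the reverse inclusion, take $x\in U(\tilde m)\setminus D(\tilde M)$ and suppose $x\notin U$; then $x$ sits below some maximal element $z$ of $\nats_0^d\setminus U$ (here finiteness of $U(\tilde m)\setminus U$, hence of $(\nats_0^d\setminus U)\cap U(\tilde m)$, guarantees such a maximal $z$ exists above $x$ — I should be a little careful that $z$ can be taken in $U(\tilde m)$, which follows since $x\in U(\tilde m)$ and $z\ge x$), but $z\in\max(\nats_0^d\setminus U)=\tilde M$, so $x\le z\in\tilde M$ puts $x\in D(\tilde M)$, a contradiction. Therefore $U = U(\tilde m)\setminus D(\tilde M)$ is uniquely determined, proving $U_I=U_J$ and hence $I=J$.

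\textbf{Main obstacle.} The genuinely substantive point — as opposed to bookkeeping about upsets and downsets — is the argument that the finiteness hypothesis on $(m)/I$ lets one ``complete'' any $x\notin U$ to a \emph{maximal} element of $\nats_0^d\setminus U$ lying above it and still inside $U(\tilde m)$; without finiteness one could have infinite ascending chains in the complement and the docle would fail to capture $U$. I would make this precise by noting $(\nats_0^d\setminus U)\cap U(\tilde m) = D\cap U(\tilde m)$ is finite (once existence is in hand, or a priori from the hypothesis), so every element of it is dominated by a maximal one, and maximal elements of $\nats_0^d\setminus U$ automatically land in $U(\tilde m)$ once we know they dominate something in $U(\tilde m)$ — combined with the observation that points outside $U(\tilde m)$ are always dominated within $\nats_0^d\setminus U$ by points of $\tilde M$. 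Everything else is a routine translation through the monoid isomorphism, and the ``in particular'' clause is immediate on taking $m=1$.
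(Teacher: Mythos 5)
The paper does not actually supply its own proof of Theorem~\ref{thm:eye-catcher} (it is quoted from~\cite{A-Epstein}), so I'll assess your argument on its own merits. Your uniqueness direction is correct, including the careful point about why a maximal element of $\nats_0^d\setminus U$ lying above a given $x\in U(\tilde m)$ can be found inside $U(\tilde m)$ and is then globally maximal. But the existence verification has a genuine gap: you assert that every element of $\nats_0^d\setminus U = D(\tilde M)\cup\bigl(\nats_0^d\setminus U(\tilde m)\bigr)$ lies below some $\tilde a\in\tilde M$, and in particular that any $x\not\geq\tilde m$ is $\leq$ some $\tilde a$. That is false once $d\geq 2$: with $\tilde m = (1,1)$ and $\tilde M = \{(1,1)\}$, the point $x = (0,100)$ satisfies $x\not\geq\tilde m$ but also $x\not\leq(1,1)$. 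So $\nats_0^d\setminus U$ is in general not contained in $D(\tilde M)$, and the route you propose to $\max(\nats_0^d\setminus U)=\tilde M$ does not go through.

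What you actually need is weaker and is true: no point $x\not\geq\tilde m$ is \emph{maximal} in $\nats_0^d\setminus U$. If $x_i<m_i$ and $d\geq 2$, pick any $j\neq i$; the point obtained from $x$ by increasing the $j$-th coordinate by $1$ still has $i$-th coordinate $<m_i$, hence lies outside $U(\tilde m)\supseteq U$, so it sits strictly above $x$ in $\nats_0^d\setminus U$. (For $d=1$, $\tilde M=\{a\}$ with $a\geq m>x$, so $x+1\leq a\in D(\tilde M)$ gives the same conclusion.) Consequently every maximal element of $\nats_0^d\setminus U$ lies in the finite set $D(\tilde M)\cap U(\tilde m)$; if such a $z$ is maximal and $z\leq\tilde a\in\tilde M\subseteq\nats_0^d\setminus U$, maximality forces $z=\tilde a$. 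Combined with the easy fact that each $\tilde a\in\tilde M$ \emph{is} maximal (anything strictly above $\tilde a$ lies in $U(\tilde m)$ and, by the antichain property, outside $D(\tilde M)$, hence in $U$), this gives $\doc(U)=\tilde M$. With this repair your proof is complete.
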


\begin{example}
\label{exa:d=2}
To illustrate Theorem~\ref{thm:eye-catcher}, consider the case $d=2$
so $R = K[x,y]$ and the set $M = \{x^2y^3,x^5y\}$ of non-comparable
monomials of $R$.

For the monomial $m = xy\in R$ we have $M\subseteq (m)$
and the unique monomial ideal $I\subseteq (m)$ with $\doc(I) = M$
and $(m)/I$ of finite dimension over $K$ is here given by
$I = (xy^4, x^3y^2,x^6y)$. In this case we have $\dim_K((m)/I) = 9$. 

For the monomial $1\in R$ we clearly have $M\subseteq (1) = R$
and the unique zero dimensional ideal $I\subseteq R$ with $\doc(I) = M$
is here given by $I = (y^4, x^3y^2,x^6)$. In this case we have
$\dim_K(R/I) = 18$.
\end{example}

With the setup of Theorem~\ref{thm:eye-catcher}, let 
$M\subseteq R = K[x_1,\ldots,x_d]$ be a set of non-comparable monomials
and let $M'\subseteq S = K[y_i,\ldots,y_d]$ be the analogous set of monomials
in $S$, relabeled in terms of the variables $y_i$s instead of $x_i$s.
By the above Theorem~\ref{thm:eye-catcher} there is a unique zero-dimensional
monomial ideal $I$ of $R$ with $\doc(I) = M$. We will now present a different
description of this ideal $I$ as an inverse ideal. 

Let $J = \ann(M')$ be the inverse ideal of the set $M'$
of monomials in $S$ corresponding to $M$. We will show, in a combinatorial way,
that the ideals $J$ and $I$ are identical. As we have noted before,
$J = \ann(R(M'))$, and since $R(M') = R(D(M'))$, we have
$J = \{f\in R : f\circ a' = 0 \mbox{ for all } a'\in D(M')\}$, where
$D(M')$ is the downset of all monomials in $G' = [y_1,\ldots, y_d]$ that divide
some monomial in $M'$. In fact, for any set of monomials $X'$ of $S$, the
$R$ module of $S$ generated by $X'$ is always generated as a $K$-vector space
by the downset $D(X')$ of $G'$, and so $\ann(M') = \ann(D(M'))$.
We will now argue that $J$ is a monomial ideal of $R$ and is equal to $I$.

{\sc The case $I\subseteq J$:}
We note here that the monomials in $I$ form an upset $U(I)$ in 
the monoid and lattice $G = [x_1,\ldots,x_d]$ with respect to divisibility, and
that $I$ is spanned by $U(I)$ as a $K$-vector space.
Since $M\subseteq G\setminus U(I)$ which is a downset in $G$, it follows from 
Claim~\ref{clm:up-down} that
no monomial in $I$ can divide a monomial in $M$, as it is an element 
in the upset $U(I)$. Hence, by (\ref{eqn:xoy}), when a monomial in $I$ 
acts on a monomial in $M'$ the result will be zero and so $I\subseteq J$. 

{\sc The case $J\subseteq I$:}
To show the containment  $J\subseteq I$ we begin with a lemma.
\begin{lemma}
\label{lmm:ann-mon=mon}
Let $f = \sum_ic_ia_i \in R$ where $c_i\in K$ and each $a_i\in G$ is a monomial,
and $a'\in G'\subseteq S$ be a monomial. If $f\in\ann(a')$ then
$a_i\in\ann(a')$ for each $i$.

Let $g' = \sum_ic_ia_i' \in S$ where $c_i\in K$ and each $a_i'\in G'$ is a monomial,
and $a\in G\subseteq R$ be a monomial. If $g'\in\ann(a)$ then
$a_i'\in\ann(a)$ for each $i$.
\end{lemma}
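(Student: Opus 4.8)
The key observation is that the action described in~(\ref{eqn:xoy}) is ``monomial-diagonal'': a monomial $a \in G$ sends a monomial $a' \in G'$ either to $0$ (when the exponent vector of $a$ is not $\leq$ that of $a'$) or to a nonzero scalar multiple of a single monomial $\tilde y^{\tilde q - \tilde p}$, and crucially \emph{distinct} monomials $a_i$ with $\tilde p_i \leq \tilde q$ produce \emph{distinct} monomials $\tilde y^{\tilde q - \tilde p_i}$ in $S$. So my plan is: fix the monomial $a' = \tilde y^{\tilde q}$, write $f = \sum_i c_i a_i$ with the $a_i$ pairwise distinct and all $c_i \neq 0$, and split the index set into $I_0 = \{i : a_i \not\leq a'\}$ (in the divisibility/exponent sense) and $I_1 = \{i : a_i \leq a'\}$. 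For $i \in I_0$ we already have $a_i \circ a' = 0$, i.e.\ $a_i \in \ann(a')$, so there is nothing to prove for those indices. For $i \in I_1$, formula~(\ref{eqn:xoy}) gives $a_i \circ a' = \lambda_i \tilde y^{\tilde q - \tilde p_i}$ with $\lambda_i \in K$ a product of factorial ratios, hence $\lambda_i \neq 0$ since $K$ has characteristic zero.

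The next step is to assemble $f \circ a' = \sum_{i \in I_1} c_i \lambda_i \tilde y^{\tilde q - \tilde p_i}$. Because the map $a_i \mapsto \tilde y^{\tilde q - \tilde p_i}$ is injective on $I_1$ (subtracting a fixed vector $\tilde q$ is injective on exponent vectors), the monomials $\tilde y^{\tilde q - \tilde p_i}$, $i \in I_1$, are pairwise distinct, hence linearly independent over $K$ in $S$. The hypothesis $f \in \ann(a')$ says this sum is $0$, so every coefficient vanishes: $c_i \lambda_i = 0$ for all $i \in I_1$. Since $c_i \neq 0$ and $\lambda_i \neq 0$, the set $I_1$ must actually be empty. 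Therefore $a_i \not\leq a'$ for every $i$, and by~(\ref{eqn:xoy}) again $a_i \circ a' = 0$, i.e.\ $a_i \in \ann(a')$ for each $i$, as desired. The second statement (with the roles of $R$ and $S$, of $x$ and $y$, interchanged) is proved by the identical argument, using~(\ref{eqn:xoy}) read with $a$ acting and noting the same injectivity of exponent subtraction and the same nonvanishing of the factorial coefficients in characteristic zero.

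I do not expect any serious obstacle here; the proof is essentially ``linear independence of distinct monomials'' once one isolates the right bookkeeping. The only point that needs genuine (if mild) care is the nonvanishing of the scalar $\lambda_i = \prod_{j=1}^d (q_j)!/(q_j - p_j)!$ when $\tilde p_i \leq \tilde q$: this is exactly where the characteristic-zero hypothesis on $K$ is used, since in positive characteristic a factorial ratio could be divisible by the characteristic and vanish, which would break the argument. I would state this nonvanishing explicitly as the crux and then let the linear-independence argument do the rest.
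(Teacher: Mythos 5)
Your proof is correct and is essentially the same argument as the paper's: both rest on the facts that the action~(\ref{eqn:xoy}) sends distinct monomials $a_i$ dividing $a'$ to distinct monomials of $S$ with nonzero (char-zero) scalar coefficients, and that distinct monomials are linearly independent. The only cosmetic difference is that you give a direct proof via the explicit partition into $I_0$ and $I_1$, whereas the paper phrases it as a contradiction after reducing to a ``minimal'' representation of $f$ where all $a_i\circ a' \neq 0$ --- the two phrasings are interchangeable.
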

\begin{proof}
For the first assertion assume $f\circ a' = 0$ and that $a_i\circ a' = 0$
does not hold for all $i$. We may assume $f\in R$ is minimal as a sum,
so that $a_i\circ a' \neq 0$ for each $i$ and that the $a_i$ are distinct.
Writing $a_i = {\tilde{x}}^{\tilde{p}_i}$ for each $i$ and
$a' = {\tilde{y}}^{\tilde{q}}$ we therefore have by
(\ref{eqn:xoy}) that $\tilde{p}_i\leq\tilde{q}$ for each $i$. 
Since the monomials $a_i$ are distinct, so are their exponent vectors
$\tilde{p}_i$ in ${\nats}_o^d$. Therefore the exponent vectors 
$\tilde{q} - \tilde{p}_i$ in ${\nats}_o^d$ are also distinct. Hence by (\ref{eqn:xoy}), $f\circ a' = \sum_i c_i(a_i\circ a')$ is a
$K$-linear combination of distinct monomials in $S$ having the
same number of nonzero terms as in $f$, since the characteristic of
$K$ is zero. This makes $f\circ a'$ nonzero in $S$, a contradiction.

The second assertion is derived dually in exactly the same way.
\end{proof}

That the docle $\doc(I) = M$ means that $M$ consists exactly of all the
maximal elements of $G\setminus U(I)$ with respect to divisibility. Since
$I$ is zero dimensional, then $U(I)$ is cofinite, so $G\setminus U(I)$ is a 
finite set of monomials, and hence each monomial therein is less than or equal to some
maximal monomial in $M$. Therefore $G\setminus U(I) = D(M)$.
Since $U(I)$ and $D(M)$ are disjoint we have the following.
\begin{claim}
\label{clm:UD}
For a zero dimensional monomial ideal $I$ with docle $\doc(I) = M$ 
we have a partition $G = U(I) \cup D(M)$.
\end{claim}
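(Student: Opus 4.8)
The plan is to verify the two claimed inclusions $U(I)\subseteq G\setminus D(M)$ and $D(M)\subseteq G\setminus U(I)$, together with the covering $G = U(I)\cup D(M)$; since $U(I)$ and $D(M)$ are then complementary subsets of $G$ that cover $G$, they partition $G$. The key observations have in fact already been assembled in the paragraphs immediately preceding the claim, so the proof amounts to collecting them.

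First I would recall that $M = \doc(I) = \max(G\setminus U(I))$, so every element of $M$ lies outside $U(I)$, and since $G\setminus U(I)$ is a downset (Claim~\ref{clm:up-down} applied to the partition of $G$ into the upset $U(I)$ of monomials in $I$ and its complement), we get $D(M)\subseteq G\setminus U(I)$, i.e.\ $U(I)$ and $D(M)$ are disjoint. Next, the zero-dimensionality of $I$ forces $U(I)$ to be cofinite in $G$, so $G\setminus U(I)$ is finite; hence every monomial of $G\setminus U(I)$ is dominated (w.r.t.\ divisibility) by some maximal element, and the maximal elements of $G\setminus U(I)$ are exactly the elements of $M$. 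This yields $G\setminus U(I)\subseteq D(M)$, which combined with the previous inclusion gives $G\setminus U(I) = D(M)$, equivalently $G = U(I)\cup D(M)$ with the two pieces disjoint.

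There is essentially no obstacle here: the statement is a bookkeeping consequence of the definition of $\doc$, the fact (already noted) that the monomials in a monomial ideal form an upset in $G$, and the finiteness coming from zero-dimensionality. The only point requiring the slightest care is the step ``every element of the finite downset $G\setminus U(I)$ lies below some element of its set of maximal elements,'' which is the standard fact that in a finite poset every element is $\le$ a maximal one; this is where zero-dimensionality of $I$ is actually used, and it is worth stating explicitly rather than leaving implicit. Once that is in place, the partition $G = U(I)\cup D(M)$ is immediate.
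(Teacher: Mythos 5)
Your proof is correct and takes essentially the same route as the paper: identify $M$ as $\max(G\setminus U(I))$, use zero-dimensionality to get that $G\setminus U(I)$ is finite, invoke the fact that every element of a finite poset lies below a maximal element to conclude $G\setminus U(I) = D(M)$, and note disjointness. The only cosmetic difference is that you split the equality into two inclusions and cite Claim~\ref{clm:up-down} for the downset property explicitly, whereas the paper treats $D(M)\subseteq G\setminus U(I)$ as immediate once the equality is established.
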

By Lemma~\ref{lmm:ann-mon=mon}, $J = \ann(M')$ is a monomial ideal of $R$.
Suppose $a\in J$ is a monomial, and so $a\circ a' = 0$ whenever $a'\in M'$. 
By (\ref{eqn:xoy}), this means precisely that $a$ does not divide
any of the monomials in $M$, and so by Claim~\ref{clm:UD},
$a\in G\setminus D(M) = U(I)\subseteq I$ since $I$ is spanned by
$U(I)$ as a $K$-vector space. This shows that $J\subseteq I$
and so we have that $J = I$. 
Although we suspect this to be well-known, we have derived it
in an elementary and a combinatorial way. This constitutes our
first description of the inverse ideal $J$ of a given set
of monomials $M$ as an ideal $I$
whose monomials in its socle that are not contained in $I$ are exactly
those in $M$. We summarize in the following.

\begin{observation}
\label{obs:I=J}
For any non-empty set $M$ of non-comparable monomials  of
$R = K[x_1,\ldots,x_d]$, the inverse ideal ${M'}^{-1} = \ann(M')$ of the 
set $M'\subseteq S$ of monomials corresponding to $M$ is identical to 
the unique zero dimensional monomial ideal
$I$ of $R$ with $\doc(I) = M$ from Theorem~\ref{thm:eye-catcher}.
\end{observation}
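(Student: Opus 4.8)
The plan is to show the two containments $I \subseteq {M'}^{-1}$ and ${M'}^{-1} \subseteq I$ separately, exactly as the preceding discussion has already begun to lay out, so that Observation~\ref{obs:I=J} is really a repackaging of the combinatorial facts just established. First I would invoke Theorem~\ref{thm:eye-catcher} to produce the unique zero-dimensional monomial ideal $I$ of $R$ with $\doc(I)=M$; this gives us the cofinite upset $U(I)$ of monomials in $G=[x_1,\dots,x_d]$, and by the displayed observation following Definition~\ref{def:doc(I)}, $I$ is spanned over $K$ by $U(I)$. The key structural input is Claim~\ref{clm:UD}: since $I$ is zero-dimensional, $G\setminus U(I)$ is finite, every monomial of it lies below a maximal monomial, hence $G\setminus U(I)=D(M)$ and we get the partition $G = U(I)\,\dot\cup\,D(M)$.

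For the inclusion $I\subseteq {M'}^{-1}=\ann(M')$, I would argue that a monomial $a\in U(I)$ cannot divide any monomial $m\in M$: if it did, then since $D(M)$ is a downset containing $m$ (Claim~\ref{clm:up-down} or Claim~\ref{clm:M=doc}) we would have $a\in D(M)$, contradicting the partition of Claim~\ref{clm:UD}. Hence by the action formula~(\ref{eqn:xoy}), $a\circ m'=0$ for every $m'\in M'$, so $a\in\ann(M')$; since $I$ is spanned by such monomials and $\ann(M')$ is a $K$-submodule, $I\subseteq\ann(M')$. For the reverse inclusion, the crucial tool is Lemma~\ref{lmm:ann-mon=mon}: writing an arbitrary $f\in\ann(M')$ in its monomial expansion $f=\sum_i c_i a_i$ and applying the lemma to each $m'\in M'$ shows every $a_i\in\ann(m')$, so in fact $\ann(M')$ is spanned by the monomials it contains — in particular it is a monomial ideal. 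Then, for a monomial $a\in\ann(M')$, the formula~(\ref{eqn:xoy}) forces $a$ not to divide any monomial of $M$, i.e.\ $a\notin D(M)$, so by Claim~\ref{clm:UD} $a\in U(I)\subseteq I$. Therefore ${M'}^{-1}\subseteq I$, and combined with the first inclusion we conclude ${M'}^{-1}=I$.

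The main obstacle is the step that $\ann(M')$ is a monomial ideal, which is exactly where characteristic zero is used: Lemma~\ref{lmm:ann-mon=mon} relies on the fact that differentiating distinct monomials produces a nonzero $K$-linear combination of distinct monomials with no cancellation, which can fail in positive characteristic. Everything else is bookkeeping with the upset/downset partition and the vanishing criterion~(\ref{eqn:xoy}). One should also be slightly careful that $M'\subseteq S$ is literally the antichain obtained from $M$ by the substitution $x_i\mapsto y_i$ (Convention~\ref{conv:apostrophe}), so that divisibility relations in $G'$ mirror those in $G$ and the partition argument transfers verbatim; and one should note that $\ann(M')=\ann(D(M'))=\ann(R(M'))$ as already observed, so it suffices to test the annihilation condition on the generating antichain $M'$ rather than on the whole submodule it generates.
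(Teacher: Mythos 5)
Your proof is correct and follows essentially the same route as the paper: Theorem~\ref{thm:eye-catcher} for uniqueness of $I$, Claim~\ref{clm:UD} for the partition $G = U(I)\,\dot\cup\,D(M)$, the action formula~(\ref{eqn:xoy}) for the containment $I \subseteq \ann(M')$, and Lemma~\ref{lmm:ann-mon=mon} to show $\ann(M')$ is monomial and hence contained in $I$. The only addition is that you make explicit the dependence on characteristic zero hiding inside Lemma~\ref{lmm:ann-mon=mon}, which the paper leaves implicit in its standing hypothesis on $K$.
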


\begin{example}
\label{exa:d=2-2}
Consider again the set $M = \{x^2y^3,x^5y\}$ of non-comparable monomials of
$R = K[x,y]$ from Example~\ref{exa:d=2}. Since we are now dealing with the two
polynomial rings $R = K[x_1,x_2]$ and $S = K[y_1,y_2]$ we will here use the
variables $x_1$ and $x_2$ for $x$ and $y$ in $R$ respectively. In this case
we then have $M = \{x_1^2x_2^3, x_1^5x_2\}$ and so
$M' = \{y_1^2y_2^3, y_1^5y_2\}\subseteq S$. The unique zero-dimensional monomial
ideal $I$ of $R$ with $\doc(I) = M$ is here (by the above example) given by
$I = (x_2^4, x_1^3x_2^2,x_1^6)\subseteq R = K[x_1,x_2]$.

It is clear that
$J = \ann(M') = \{f\in R : f\circ(y_1^2y_2^3) = f\circ(y_1^5y_2) = 0\}$ is an ideal of
$S$. By Lemma~\ref{lmm:ann-mon=mon} we have that $J$ is a monomial ideal of $S$ and so
it suffices to describe the monomial generators of $J$. For a monomial
$a = x_1^ix_2^j\in R$ we then obtain
\[
0 = a\circ(y_1^2y_2^3) =
\left(\frac{\partial}{\partial y_1}\right)^i
\left(\frac{\partial}{\partial y_2}\right)^jy_1^2y_2^3, 
\]
which happens if and only if $i\geq 3$ or $j\geq 4$, or
$a \in (x_1^3,x_2^4)$. Likewise we obtain
\[
0 = a\circ(y_1^5y_2) =
\left(\frac{\partial}{\partial y_1}\right)^i
\left(\frac{\partial}{\partial y_2}\right)^jy_1^5y_2, 
\]
which happens if and only if $i\geq 6$ or $j\geq 2$, or
$a\in (x_1^6,x_2^2)$. Therefore,
\[
J = \ann(M') = (x_1^3,x_2^4)\cap (x_1^6,x_2^2) = (x_1^6,x_1^3x_2^2,x_2^4),
\]
which corresponds to (or actually is equal to, by relabeling the variables)
$I$ provided in the previous example and is consistent
with the above Observation~\ref{obs:I=J} as ${M'}^{-1} = \ann(M') = J$.
\end{example}

Next we describe combinatorially the inverse system of a monomial ideal,
in a dual fashion, in terms of the docle. 

Suppose we start with a monomial ideal $I$ of $R$, then 
by Definition~\ref{def:IS} the inverse system of $I$ can be given by
$I^{-1}= \{g'\in S : a\circ g' = 0 \mbox{ for each monomial } a\in I\}$.
By Lemma~\ref{lmm:ann-mon=mon} $I^{-1}$ is an $R$ module generated by monomials
in $S$ each of which by (\ref{eqn:xoy}) is not divisible by any monomial in 
$I'$, the monomial ideal in $S$ corresponding to $I$. In other words the inverse
system 
$I^{-1}$ is generated by all monomials in the downset $G'\setminus U(I')$ as
an $R$ module. This was also observed in \cite{Stolz} 
by different arguments. 

Note that if $I$ is a given monomial ideal of $R$, then the socle $\soc(I)$,
and hence the docle $\doc(I)$ is then uniquely determined by the downset
$D(\doc(I))$ in $G$ as $\doc(I)$ are the maximal elements of $D(\doc(I))$.
Also note that if $I$ is zero dimensional, so $U(I)$ is cofinite --
that is, the downset $G\setminus U(I)$ of $G$
is finite and therefore generated by its maximal elements $\doc(I)$ --  then 
$G\setminus U(I) = D(\doc(I))$, the downset generated by $\doc(I)$.
We summarize in the following, where we can describe the inverse system of a
monomial ideal in terms of its socle in the zero-dimensional case. 
\begin{corollary}
\label{cor:summary}
Let $I$ be a monomial ideal of $R$. The inverse system $I^{-1}$ 
is the $R$-submodule of $S$ generated by all monomials in the downset
$G'\setminus U(I')$. 

If $I$ is zero dimensional, then $G'\setminus U(I') = D(\doc(I'))$
and so the inverse system $I^{-1}$ is an $R$ module in $S$ 
minimally generated by $\doc(I')$.
\end{corollary}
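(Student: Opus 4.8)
The plan is to prove Corollary~\ref{cor:summary} by simply assembling the combinatorial facts already established in this section, so that the "proof" is little more than an observation. The first statement has essentially been argued in the paragraph immediately preceding the corollary: by Definition~\ref{def:IS}, $I^{-1} = \ann(I)$, and since $I$ is spanned as a $K$-vector space by the upset $U(I)$ of monomials it contains, we have $I^{-1} = \ann(U(I))$. First I would invoke Lemma~\ref{lmm:ann-mon=mon} (second assertion) to conclude that $I^{-1}$ is generated as an $R$-module by monomials in $S$: an element $g' = \sum_i c_i a_i'$ lies in $I^{-1}$ iff each $a_i'$ does, since $a_i' \in \ann(a)$ for every monomial $a \in I$ is forced. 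Then I would identify exactly which monomials $a' \in G'$ lie in $I^{-1}$: by~(\ref{eqn:xoy}), a monomial $a = \tilde{x}^{\tilde{p}} \in I$ satisfies $a \circ a' = 0$ precisely when $a$ does not divide the monomial corresponding to $a'$, i.e.\ when $\tilde{p} \not\leq \tilde{q}$ where $a' = \tilde{y}^{\tilde q}$. So $a' \in I^{-1}$ iff no monomial of $U(I)$ divides $a'$, which (transporting between $G$ and $G'$ via the order isomorphism) says exactly $a' \in G' \setminus U(I')$. Since $G' \setminus U(I')$ is a downset (Claim~\ref{clm:up-down}), this gives the first statement verbatim.

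For the second statement I would specialize to the zero-dimensional case. When $I$ is zero-dimensional, $U(I)$ is cofinite in $G$, hence $U(I')$ is cofinite in $G'$ and the downset $G' \setminus U(I')$ is finite. A finite downset is generated by its maximal elements, and by the definition of the docle, $\max(G' \setminus U(I')) = \doc(I')$; equivalently, every monomial in $G' \setminus U(I')$ divides some element of $\doc(I')$, so $G' \setminus U(I') = D(\doc(I'))$ — exactly the equality observed in the paragraph before the corollary (this is Claim~\ref{clm:UD} transported to $G'$). It then follows that $I^{-1} = R \circ D(\doc(I')) = R \circ \doc(I')$, since the $R$-action via differentiation takes $\doc(I')$ down onto all of its divisors: applying suitable monomials $\tilde{x}^{\tilde p}$ to an element $\tilde{y}^{\tilde q} \in \doc(I')$ produces, up to the nonzero scalar $\prod_i (q_i)!/(q_i - p_i)!$ (nonzero because $K$ has characteristic zero), every $\tilde{y}^{\tilde q - \tilde p}$ with $\tilde p \leq \tilde q$. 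Hence $\doc(I')$ generates $I^{-1}$ as an $R$-module.

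Finally I would check minimality: no proper subset of $\doc(I')$ generates $I^{-1}$. Since $\doc(I')$ is an antichain (Claim~\ref{clm:M=doc}(i)), if $a' \in \doc(I')$ were in the $R$-submodule generated by the remaining elements, then by Lemma~\ref{lmm:ann-mon=mon} again (applied degree-wise, or directly since $R$ acts by lowering degree) $a'$ would be an $R$-multiple of monomials strictly below some other element of $\doc(I')$ — but $a'$ is maximal in $G' \setminus U(I')$ and $R$-multiples only move downward, so $a'$ itself would have to divide another element of the docle, contradicting that $\doc(I')$ is an antichain. I expect the only mildly delicate point to be stating minimality cleanly — everything else is bookkeeping with the order isomorphism $G \cong G' \cong {\nats}_0^d$ and repeated appeals to Lemma~\ref{lmm:ann-mon=mon} and~(\ref{eqn:xoy}) — so I would keep the write-up terse, citing Claim~\ref{clm:up-down}, Claim~\ref{clm:UD}, and Lemma~\ref{lmm:ann-mon=mon} rather than re-deriving them.
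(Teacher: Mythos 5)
Your proposal is correct and follows essentially the same route as the paper's own (rather terse) argument in the paragraphs preceding the corollary: Lemma~\ref{lmm:ann-mon=mon} and equation~(\ref{eqn:xoy}) give the first statement, and cofiniteness of $U(I)$ together with Claim~\ref{clm:UD} gives $G'\setminus U(I') = D(\doc(I'))$ for the second. You additionally spell out the \emph{minimality} of the generating set $\doc(I')$ via the antichain/downward-action argument, which the paper asserts but leaves implicit.
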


We now describe the ``double inverse" operator for sets of monomials 
and monomial ideals and then a related closure operator.
Recall that a {\em closure operator} on a set ${\cal{S}}$ is a map
$\mathbb{P}(S)\rightarrow \mathbb{P}(S)$, from the power set of $S$ to itself,
given by $X\mapsto\overline{X}$ that is
(i) extensive: $X\subseteq\overline{X}$,
(ii) increasing: $X\subseteq Y\Rightarrow\overline{X}\subseteq\overline{Y}$
and
(iii) idempotent: $\overline{\overline{X}} = \overline{X}$.
First recall that $\ann(X) = \ann(I(X))$ 
for any subset $X\subseteq R$ of monomials, and likewise $\ann(Y') = \ann(R(Y'))$
for any subset $Y'\subset S$ of monomials. Adhering to 
Convention~\ref{conv:apostrophe},
the following is easily obtained.
\begin{proposition}
\label{prp:X-inverse}
For any subset $X\subseteq R$ of monomials we have
\[
X^{-1} = R\circ(G'\setminus U(X')) = R(G'\setminus U(X')),
\]
the $R$-module generated by the monomials in $G'\setminus U(X')$.

For any subset $Y'\subseteq S$ of monomials we have 
\[
{Y'}^{-1} = I(G\setminus D(Y)),
\]
the ideal of $R$ (that is the $R$-module) generated by the monomials
in $G\setminus D(Y)$.
\end{proposition}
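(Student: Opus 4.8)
The plan is to prove the two displayed identities of Proposition~\ref{prp:X-inverse} as purely combinatorial consequences of Lemma~\ref{lmm:ann-mon=mon}, equation~(\ref{eqn:xoy}), and the order-isomorphism between ${\nats}_0^d$ and $G = [x_1,\ldots,x_d]$. Both assertions are of the same shape: one takes a set of monomials, forms an up- or down-set in the appropriate monoid, and the annihilator is the module/ideal generated by the complementary down- or up-set. The dual second statement is proved in exactly the same way as the first with the roles of $R$ and $S$ interchanged, so I would write out the first in detail and then say ``the second is derived dually in exactly the same manner.''

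For the first identity, I would proceed as follows. First, since $X^{-1} = \ann(X) = \ann(I(X))$ and since $I(X)$ is spanned as a $K$-vector space by the upset $U(X')$'s counterpart $U(X) \subseteq G$, a monomial $g' \in S$ lies in $X^{-1}$ iff $a \circ g' = 0$ for every monomial $a \in U(X)$. Now I apply the second half of Lemma~\ref{lmm:ann-mon=mon}: writing $g' = \sum_i c_i a_i'$ with the $a_i'$ distinct monomials and $c_i \neq 0$, we have $g' \in \ann(a)$ for a monomial $a$ iff each $a_i' \in \ann(a)$; hence $g' \in X^{-1}$ iff each $a_i'$ is annihilated by every monomial of $U(X)$. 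So it suffices to show that a monomial $a'$ is annihilated by every monomial of $U(X)$ iff $a' \in G' \setminus U(X')$. By~(\ref{eqn:xoy}), a monomial $b = \tilde x^{\tilde p}$ annihilates $a' = \tilde y^{\tilde q}$ iff $\tilde p \not\leq \tilde q$, i.e.\ iff $b$ (viewed in $G$) does not divide $a'$ (viewed in $G'$). So $a'$ is killed by all of $U(X)$ iff no element of $U(X)$ divides $a'$, i.e.\ iff $a' \notin D(U(X)) \cap G' = U(X')$ — here using that $U(X')$ is already an upset so its complement $G'\setminus U(X')$ is exactly the set of monomials divisible by no element of $U(X')$, equivalently by no element of $X'$. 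Therefore $X^{-1}$ consists precisely of those $K$-linear combinations all of whose monomial terms lie in $G' \setminus U(X')$, which is by definition the $K$-span of $G'\setminus U(X')$; since $G'\setminus U(X')$ is a downset, this $K$-span coincides with the $R$-module $R\circ(G'\setminus U(X')) = R(G'\setminus U(X'))$, giving the claimed equality.

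For the second identity I would run the same argument with $S$ and $R$ swapped: ${Y'}^{-1} = \ann(Y') = \ann(R(Y'))$, and $R(Y')$ is spanned over $K$ by the downset $D(Y')$, so a monomial $a \in R$ lies in ${Y'}^{-1}$ iff $a \circ b' = 0$ for every monomial $b' \in D(Y')$. Using the first half of Lemma~\ref{lmm:ann-mon=mon} one reduces to monomials of $Y'$ themselves: $a$ kills all of $D(Y')$ iff $a$ kills every monomial of $Y'$, and by~(\ref{eqn:xoy}) this holds iff $a$ (in $G$) divides no monomial of $Y$, i.e.\ iff $a \in G \setminus D(Y)$. Hence ${Y'}^{-1}$ is the $K$-span of $G\setminus D(Y)$, and since $G\setminus D(Y)$ is an upset of $G$ it spans exactly the monomial ideal $I(G\setminus D(Y))$, as claimed.

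I do not anticipate a serious obstacle here; the work is genuinely routine once Lemma~\ref{lmm:ann-mon=mon} is in hand. The one point that needs a little care, and which I would state explicitly, is the passage from ``$g'$ is annihilated by all of $X$'' to ``every monomial term of $g'$ is annihilated by all of $X$'': Lemma~\ref{lmm:ann-mon=mon} is stated for annihilation by a \emph{single} monomial $a$, so one must apply it separately for each monomial generator $a$ of $U(X)$ and then intersect, noting that the decomposition of $g'$ into distinct monomial terms does not depend on which $a$ we test against. The other thing to keep straight is the bookkeeping between the multiplicative monoid $G$ of $x$-monomials, the monoid $G'$ of $y$-monomials, and the primed/unprimed correspondence of Convention~\ref{conv:apostrophe}, so that ``$b$ divides $a'$'' is unambiguous; I would phrase divisibility statements via the exponent vectors in ${\nats}_0^d$ to avoid any confusion, exactly as in the proof of Lemma~\ref{lmm:ann-mon=mon}.
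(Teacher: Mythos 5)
Your proof is correct and takes the same combinatorial route the paper has in mind: it reduces the claim to Lemma~\ref{lmm:ann-mon=mon}, equation~(\ref{eqn:xoy}), and the identities $\ann(X)=\ann(I(X))$, $\ann(Y')=\ann(R(Y'))$, which is exactly the preparation after which the paper declares the proposition ``easily obtained.'' One notational slip worth fixing: where you write ``$a'\notin D(U(X))\cap G' = U(X')$'' the middle expression should simply be the upset $U(X')$, not a downset $D(U(X))$, since a monomial is divisible by some element of $X'$ precisely when it lies in the \emph{up}set $U(X')$ --- which is what your surrounding prose correctly says, so only the displayed formula needs the correction.
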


\begin{rmk*}
For any set $Y'$ of monomials in the above proposition,
the ideal ${Y'}^{-1}$ is always a zero-dimensional ideal of $R$.
\end{rmk*}

With the above Proposition~\ref{prp:X-inverse} in mind we firstly
obtain purely combinatorially that for any subset $X\subseteq R$ of monomials that
\begin{eqnarray*}
(X^{-1})^{-1} & = & (R(G'\setminus U(X')))^{-1} \\
  & = &  (G'\setminus U(X'))^{-1} \\
  & = &  I(G \setminus D(G\setminus U(X))) \\
  & = &  I(G \setminus (G\setminus U(X))) \\
  & = &  I(U(X)) \\
  & = &  I(X).
\end{eqnarray*}
Likewise for any subset $Y'\subseteq S$ of monomials we also
obtain combinatorially by Proposition~\ref{prp:X-inverse} that
\begin{eqnarray*}
({Y'}^{-1})^{-1} & = & (I(G\setminus D(Y)))^{-1} \\
  & = & (G\setminus D(Y))^{-1} \\  
  & = & R(G'\setminus U(G\setminus D(Y))') \\  
  & = & R(G'\setminus U(G'\setminus D(Y'))) \\  
  & = & R(G'\setminus (G'\setminus D(Y'))) \\  
  & = & R(D(Y')) \\  
  & = & R\circ D(Y').
\end{eqnarray*}
Since the $R$-submodule of $S$ generated by $D(Y')$ is the same
as the one generated by $Y'$, we have the following summary:
\begin{corollary}
\label{cor:X-inv-inv}
For any subset $X\subseteq R$ of monomials we have
$(X^{-1})^{-1} = I(X)$, the ideal of $R$ generated by $X$.

For any subset $Y'\subseteq S$ of monomials we have
$({Y'}^{-1})^{-1}  = R(D(Y')) = R(Y')$, the
$R$-submodule of $S$ generated by $Y'$.
\end{corollary}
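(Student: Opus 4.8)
The plan is to verify Corollary~\ref{cor:X-inv-inv} directly from Proposition~\ref{prp:X-inverse} by chasing through the two displayed chains of equalities that immediately precede the statement, and to justify each step using the order‑theoretic identities in the excerpt. The only substantive content is that the compositions of the two ``inverse'' operators collapse; once the calculation is set up correctly, each link is one of four easy facts: (i) $\ann(X)=\ann(I(X))$ for a set $X\subseteq R$ of monomials and $\ann(Y')=\ann(R(Y'))$ for $Y'\subseteq S$, so we may freely replace a set of monomial generators by the module or ideal it generates when applying ${}^{-1}$; (ii) the formulas of Proposition~\ref{prp:X-inverse}, namely $X^{-1}=R(G'\setminus U(X'))$ and $(Y')^{-1}=I(G\setminus D(Y))$; (iii) the bijection (an order isomorphism of monoids) between $G$ and $G'$ that sends $x_i\mapsto y_i$, under which $U(X)$ corresponds to $U(X')$ and $D(Y)$ to $D(Y')$, so that the ``apostrophe'' commutes past $U$ and $D$; and (iv) the purely set‑theoretic cancellation $G\setminus(G\setminus A)=A$ together with the poset fact from Claim~\ref{clm:up-down} that $G\setminus U$ is a downset and $G\setminus D$ is an upset, so that $D(G\setminus U(X))=G\setminus U(X)$ and $U(G'\setminus D(Y'))=G'\setminus D(Y')$ (a downset equals the downset it generates, and dually for an upset).

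Concretely, for the first assertion I would write, for $X\subseteq R$ a set of monomials,
\[
(X^{-1})^{-1}=\bigl(R(G'\setminus U(X'))\bigr)^{-1}=(G'\setminus U(X'))^{-1}=I\bigl(G\setminus D(G\setminus U(X))\bigr),
\]
using (i) to drop the $R(\cdot)$, then Proposition~\ref{prp:X-inverse} in its second form with $Y'=G'\setminus U(X')$ (so $Y=G\setminus U(X)$ after removing apostrophes). Since $G\setminus U(X)$ is a downset of $G$ by Claim~\ref{clm:up-down}, we have $D(G\setminus U(X))=G\setminus U(X)$, whence $G\setminus D(G\setminus U(X))=G\setminus(G\setminus U(X))=U(X)$, and therefore $(X^{-1})^{-1}=I(U(X))=I(X)$, the last equality because an ideal generated by a set of monomials equals the ideal generated by the upset those monomials generate. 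For the second assertion, for $Y'\subseteq S$ a set of monomials,
\[
((Y')^{-1})^{-1}=(I(G\setminus D(Y)))^{-1}=(G\setminus D(Y))^{-1}=R\bigl(G'\setminus U\bigl((G\setminus D(Y))'\bigr)\bigr),
\]
and since $(G\setminus D(Y))'=G'\setminus D(Y')$ and $G'\setminus D(Y')$ is an upset of $G'$, we get $U(G'\setminus D(Y'))=G'\setminus D(Y')$, hence $G'\setminus U(G'\setminus D(Y'))=G'\setminus(G'\setminus D(Y'))=D(Y')$, so $((Y')^{-1})^{-1}=R(D(Y'))$; and $R(D(Y'))=R(Y')$ because the $R$‑submodule of $S$ generated by a set of monomials is spanned as a $K$‑vector space by the downset they generate (as remarked in the paragraph preceding Observation~\ref{obs:I=J}).

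I do not expect a genuine obstacle here: the statement is a bookkeeping corollary, and the calculation has already been displayed in the excerpt. The only place requiring a modicum of care is the interplay of the apostrophe convention with $U(\cdot)$ and $D(\cdot)$ — one must be explicit that, since $x_i\mapsto y_i$ is an order isomorphism of posets, taking up/down closures commutes with priming, so that an expression like $U((G\setminus D(Y))')$ genuinely equals $U(G'\setminus D(Y'))$. Beyond pinning that down, the proof is just two applications of Proposition~\ref{prp:X-inverse} interleaved with the cancellation $G\setminus(G\setminus A)=A$ and the observation that a downset (resp.\ upset) is its own generated downset (resp.\ upset).
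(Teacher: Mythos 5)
Your proposal is correct and follows essentially the same route as the paper: the paper establishes the corollary by the very same two chains of equalities, invoking Proposition~\ref{prp:X-inverse} twice, dropping $R(\cdot)$ or $I(\cdot)$ via $\ann(X)=\ann(I(X))$ and $\ann(Y')=\ann(R(Y'))$, commuting the priming past $U$ and $D$ via the order isomorphism, and cancelling with $G\setminus(G\setminus A)=A$. You are slightly more explicit than the paper in justifying $D(G\setminus U(X))=G\setminus U(X)$ and $U(G'\setminus D(Y'))=G'\setminus D(Y')$, but the argument is the same.
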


\begin{rmk*}
The first statement of the above Corollary~\ref{cor:X-inv-inv} 
can be viewed as a combinatorial version of a descriptive algebraic property of an 
artinian local rings $(R,\m)$, where here $\m$ is the unique maximal
ideal of a the ring $R$. Namely, that $R$ is a Gorenstein ring is equivalent
to $\ann(\ann(I)) = I$ for any ideal $I$ of $R$~\cite[Exercise 3.2.15]{Bruns-Herzog}.
However, since we here are only considering monomial ideals, purely from a combinatorial 
perspective, we do not need to restrict ourselves to the poset being cofinite; the 
equivalent of an ideal being artinian (or zero-dimensional). 

Note that both double-inverse operators 
the map $X\mapsto \overline{X} := (X^{-1})^{-1}$ for 
sets $X$ of $R$ on one hand and the map $Y' \mapsto \overline{Y'} := ({Y'}^{-1})^{-1}$ 
for sets $Y'\subseteq S$ on the other, are clearly closure operators 
by Corollary~\ref{cor:X-inv-inv}.  
\end{rmk*}

\section{Closure operators on upsets and monomial ideals}
\label{sec:closure-op}

There is a more interesting closure operator than the ones mentioned 
in the last remark in Section~\ref{sec:socle-IS}
(though not relating to the usual partial order)
on monomial ideals that we
will now discuss. Since much of what we present can
be stated more generally in terms of lattices and posets, we will derive most of
what follows in terms of posets and then state what it means for monomial
ideals of $R$.

For a poset $P$ and an upset $U\subseteq P$ let 
\begin{equation}
\label{eqn:closure(U)} 
\overline{U} := P\setminus D(\doc(U)).
\end{equation}
First note that $\overline{U}$ is an upset of $P$. 
By Claim~\ref{clm:M=doc} we have that $M = \doc(U)$ is an antichain and
that $U\subseteq P\setminus D(\doc(U)) = \overline{U}$.

Next consider the double operation 
$\overline{\overline{U}} = P\setminus D(\doc(\overline{U}))$.
By definition of $\doc $ and since $\doc(U)$ is an antichain, we then obtain 
\begin{equation}
\label{eqn:doc-overline}    
\doc(\overline{U})
= \doc(P\setminus D(\doc(U))) 
= \max(P\setminus (P\setminus D(\doc(U))))
= \max(D(\doc(U)))
= \doc(U),
\end{equation}
and so we obtain
\[
\overline{\overline{U}} 
= P\setminus D(\doc(\overline{U}))
= P\setminus D(\doc(U)) = \overline{U}.
\]
We summarize this in the following:
\begin{proposition}
\label{prp:2/3-closure}
If $P$ is a poset, $U\subseteq P$ is an upset and
$\overline{U}$ is as in (\ref{eqn:closure(U)}), then 
we have the extensive and idempotent properties:
(i) $U\subseteq\overline{U}$ and
(ii) $\overline{\overline{U}} = \overline{U}$,
and so two of the three conditions for a closure operator 
are satisfied.
\end{proposition}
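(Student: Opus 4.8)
The plan is to verify the two claimed properties directly from the definition $\overline{U} := P\setminus D(\doc(U))$, exactly as the exposition preceding the statement already suggests; this is a short bookkeeping argument with no genuine obstacle, so the main point is to organize the set-theoretic manipulations cleanly.

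For property (i), first I would recall from Claim~\ref{clm:M=doc} that with $M = \doc(U) = \max(P\setminus U)$, the set $D(M)$ is a downset contained in $P\setminus U$. Taking complements and using that $U$ is an upset (so $P\setminus U$ is a downset, by Claim~\ref{clm:up-down}), the inclusion $D(M)\subseteq P\setminus U$ gives $U = P\setminus(P\setminus U)\subseteq P\setminus D(M) = \overline{U}$. I would also note in passing that $\overline{U}$ is itself an upset, since its complement $D(\doc(U))$ is a downset (again Claim~\ref{clm:up-down}), which is needed for the iteration in (ii) to make sense.

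For property (ii), the key computation is that $\doc$ is unchanged under the operation: I would compute $\doc(\overline{U}) = \max(P\setminus\overline{U}) = \max(D(\doc(U))) = \doc(U)$, where the last equality is part (iii) of Claim~\ref{clm:M=doc} (the maximal elements of the downset generated by an antichain $M$ are exactly $M$ itself, since $\doc(U)$ is an antichain by part (i) of that claim). Substituting back, $\overline{\overline{U}} = P\setminus D(\doc(\overline{U})) = P\setminus D(\doc(U)) = \overline{U}$. These are precisely the displayed equations (\ref{eqn:doc-overline}) already written out in the excerpt, so the proof really just consists of citing Claims~\ref{clm:up-down} and~\ref{clm:M=doc} at the appropriate spots.

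The only thing to be careful about — and it is the closest thing to an "obstacle" — is making sure each complement is taken inside the correct ambient poset $P$ and that one does not silently assume the missing third axiom (monotonicity, $U\subseteq V \Rightarrow \overline{U}\subseteq\overline{V}$), which indeed fails here and is the whole point of the phrase "two of the three conditions." So I would keep the argument confined strictly to a single fixed upset $U$ and resist any temptation to compare two upsets.
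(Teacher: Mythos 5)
Your proof is correct and follows essentially the same route as the paper: property (i) is read off from Claim~\ref{clm:M=doc}(ii) by taking complements, and property (ii) rests on the computation $\doc(\overline{U}) = \max(D(\doc(U))) = \doc(U)$ via Claim~\ref{clm:M=doc}(iii), exactly as in the paper's displayed equation preceding the proposition. Nothing more is needed.
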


\begin{example} 
Consider the posets $P = (\nats_0^2,\leq)$ and the upsets 
$U$ and $V$ generated by the sets $\{(1,1)\}$ and $\{ (1,0),(0,1)\}$ 
respectively. Clearly we have here that $U\subset V$, 
$\doc(U) = \emptyset$ and
$\doc(V) = \{(0,0)\}$, and hence $\overline{U} = \nats_0^2$ whereas 
$\overline{V} = V \neq \nats_0^2$. 
\end{example}

We see from the above example that the operation $U\mapsto \overline{U}$ 
does not in general satisfy the monotone property 
$U\subseteq V \Rightarrow \overline{U}\subseteq \overline{V}$.

However, instead of defining a partial order on upsets of a given poset $P$ by the 
usual set theoretic inclusion, we consider another partial order 
on the set ${\cal{U}}(P)$, the set of all upsets of the poset $P$.
This will be the main topic for the rest of this section. 

For upsets $U,V\in {\cal{U}}(P)$ consider a binary relation $\sqsubseteq$ 
defined by 
\begin{equation}
\label{eqn:sqsubseteq}
U\sqsubseteq V \Leftrightarrow \doc(U)\supseteq \doc(V) \mbox{ and } U\subseteq V.
\end{equation}
The following is straightforward to verify.
\begin{observation}
\label{obs:sq=po}
If $\sqsubseteq$ is the binary relation from (\ref{eqn:sqsubseteq}),
then $({\cal{U}}(P),\sqsubseteq)$ is a poset.
\end{observation}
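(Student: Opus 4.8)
The plan is to verify directly that the relation $\sqsubseteq$ defined in~(\ref{eqn:sqsubseteq}) satisfies the three axioms of a partial order on $\mathcal{U}(P)$: reflexivity, antisymmetry, and transitivity. Since $\sqsubseteq$ is built as a conjunction of two conditions, one on docles and one on the upsets themselves, each axiom will follow by checking it for each conjunct separately. For reflexivity, given any upset $U$, we trivially have $\doc(U)\supseteq\doc(U)$ and $U\subseteq U$, so $U\sqsubseteq U$.

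For antisymmetry, suppose $U\sqsubseteq V$ and $V\sqsubseteq U$. From the second conjunct of each we get $U\subseteq V$ and $V\subseteq U$, hence $U=V$ immediately; the docle condition is not even needed here. (In fact the docle condition becomes automatically consistent, since $\doc$ is a function of the upset.) For transitivity, suppose $U\sqsubseteq V$ and $V\sqsubseteq W$. Then $U\subseteq V\subseteq W$ gives $U\subseteq W$, and $\doc(U)\supseteq\doc(V)\supseteq\doc(W)$ gives $\doc(U)\supseteq\doc(W)$, so $U\sqsubseteq W$. Thus all three axioms hold, and $(\mathcal{U}(P),\sqsubseteq)$ is a poset.

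I do not anticipate any genuine obstacle: the statement is a routine verification, and the only mild subtlety worth a sentence is that the two conjuncts in~(\ref{eqn:sqsubseteq}) are not independent — $\doc(U)=\max(P\setminus U)$ is determined by $U$ — so one might worry whether the relation is well-behaved. But this dependence only helps: for reflexivity and antisymmetry the set-inclusion conjunct already does all the work, and for transitivity both conjuncts transit in the obvious way (with the containments running in opposite directions, which is exactly how~(\ref{eqn:sqsubseteq}) is set up). It may also be worth remarking that $\sqsubseteq$ is genuinely stronger than plain inclusion — the preceding Example with $U\subset V$ but $\doc(U)=\emptyset\not\supseteq\{(0,0)\}=\doc(V)$ shows $U\not\sqsubseteq V$ even though $U\subseteq V$ — which clarifies why introducing this new order is not vacuous, though this observation is not needed for the proof of Observation~\ref{obs:sq=po} itself.
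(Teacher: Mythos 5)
Your proof is correct and is exactly the routine verification the paper has in mind (the paper offers no proof, merely calling the observation ``straight forward to verify''). Checking reflexivity, antisymmetry, and transitivity componentwise for the two conjuncts of~(\ref{eqn:sqsubseteq}) is the intended argument.
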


\begin{example}
\label{exa:UVW}
Consider again the posets $P = (\nats_0^2,\leq)$ and the  $U$, $V$ and $W$
given by the following:
\begin{eqnarray*}
U & := & U(((2,3),(3,2),(4,1)), \\
V & := & U((2,3),(3,1)), \\
W & := & U((1,4),(2,3),(3,2),(4,1)).
\end{eqnarray*}
In this case we obtain:
\begin{eqnarray*}
 \doc(U) & = & \{(2,2),(3,1)\}, \\
 \doc(V) & = & \{(2,2)\},\\
 \doc(W) & = &\{(1,3),(2,2),(3,1)\},
\end{eqnarray*}
and therefore we have by inspection that $U\subset V$, $U\subset W$,
$\doc(V)\subset \doc(U)$ and $\doc(U)\subset \doc(W)$; all as strict 
containment. Hence, by definition (\ref{eqn:sqsubseteq}), we have 
$U\sqsubseteq V$ and the upsets $U$ and $W$ are incomparable 
w.r.t.~$\sqsubseteq$.

In addition we have
\begin{eqnarray*}
\overline(U) & = & U((0,3),(3,2),(4,0)), \\
\overline(V) & = & U((0,3),(3,0)), \\
\overline(W) & = & U((0,4),(2,3),(3,2), (4,0)).
\end{eqnarray*}
\end{example}

By (\ref{eqn:doc-overline}) we have $\doc(\overline{U}) = \doc(U)$ and
by Proposition~\ref{prp:2/3-closure} we have $U\subseteq\overline{U}$
and so we have extensivity $U\sqsubseteq\overline{U}$. By the transitivity 
of the usual subset relation $\subseteq$ and again by (\ref{eqn:doc-overline}), 
we have the monotone property for $\sqsubseteq$ as well and by
Proposition~\ref{prp:2/3-closure} we have the following.
\begin{theorem}
\label{thm:sqsubseteq-closure-op}
If $P$ is a poset, ${\cal{U}}(P)$ is the set of all upsets of $P$,
$\sqsubseteq$ is as in (\ref{eqn:sqsubseteq}) and $\overline{U}$ is as
given in (\ref{eqn:closure(U)}), then $U\mapsto\overline{U}$ is a 
closure operator on the poset $({\cal{U}}(P),\sqsubseteq)$.
\end{theorem}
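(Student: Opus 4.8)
The plan is to verify the three axioms of a closure operator for the map $U\mapsto\overline{U}$ on the poset $({\cal U}(P),\sqsubseteq)$: extensivity ($U\sqsubseteq\overline{U}$), idempotence ($\overline{\overline{U}}=\overline{U}$), and monotonicity ($U\sqsubseteq V\Rightarrow\overline{U}\sqsubseteq\overline{V}$). Two of these have essentially been done already: Proposition~\ref{prp:2/3-closure} gives $U\subseteq\overline{U}$ and $\overline{\overline{U}}=\overline{U}$ as \emph{sets}, and equation~(\ref{eqn:doc-overline}) gives $\doc(\overline{U})=\doc(U)$. So the first task is simply to package these into statements about $\sqsubseteq$: since $U\subseteq\overline{U}$ and $\doc(\overline{U})=\doc(U)\supseteq\doc(U)$, the definition (\ref{eqn:sqsubseteq}) yields $U\sqsubseteq\overline{U}$; and since $\overline{\overline{U}}=\overline{U}$ literally as sets, idempotence holds trivially (with $\doc$ equal on both sides). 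The bulk of the remaining work, and the only genuinely new point, is monotonicity with respect to $\sqsubseteq$.

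For monotonicity, suppose $U\sqsubseteq V$, i.e.\ $U\subseteq V$ and $\doc(U)\supseteq\doc(V)$. I want to conclude $\overline{U}\subseteq\overline{V}$ and $\doc(\overline{U})\supseteq\doc(\overline{V})$. The second part is immediate from (\ref{eqn:doc-overline}): $\doc(\overline{U})=\doc(U)\supseteq\doc(V)=\doc(\overline{V})$. For the first part, recall $\overline{U}=P\setminus D(\doc(U))$ and $\overline{V}=P\setminus D(\doc(V))$, so $\overline{U}\subseteq\overline{V}$ is equivalent to $D(\doc(V))\subseteq D(\doc(U))$. But $D(\cdot)$ is monotone for ordinary inclusion, and $\doc(V)\subseteq\doc(U)$ by hypothesis, so $D(\doc(V))\subseteq D(\doc(U))$ follows at once. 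Hence $\overline{U}\subseteq\overline{V}$, and combining with the $\doc$ inclusion we get $\overline{U}\sqsubseteq\overline{V}$. This is exactly the observation the paragraph preceding the theorem alludes to (``monotone property for $\sqsubseteq$ as well''), so the proof can be stated crisply.

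I would organize the write-up as three short verifications. First cite Observation~\ref{obs:sq=po} so that $({\cal U}(P),\sqsubseteq)$ is known to be a poset, making ``closure operator'' meaningful. Then: (1) extensivity, citing Proposition~\ref{prp:2/3-closure}(i) and (\ref{eqn:doc-overline}); (2) monotonicity, via the anti-monotone passage from $\doc$ to $D(\doc(\cdot))$ and the monotonicity of $D$, as above; (3) idempotence, citing Proposition~\ref{prp:2/3-closure}(ii) together with (\ref{eqn:doc-overline}) applied twice (so that $\doc(\overline{\overline{U}})=\doc(\overline{U})=\doc(U)$ and the sets agree). I do not anticipate a real obstacle here: the theorem is a repackaging of results already in hand, and the one substantive step---monotonicity for $\sqsubseteq$, which fails for ordinary $\subseteq$ by the Example---turns precisely on the fact that reversing the order on the $\doc$'s is built into the definition of $\sqsubseteq$, converting the order-reversing behavior of $U\mapsto D(\doc(U))$ into order-preserving behavior of $U\mapsto\overline{U}$. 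The only care needed is bookkeeping: always check \emph{both} clauses of (\ref{eqn:sqsubseteq}) at each step, since $\sqsubseteq$ is a conjunction of a set inclusion and a reversed $\doc$-inclusion.
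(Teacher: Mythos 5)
Your proposal is correct and follows essentially the same route as the paper: cite Observation~\ref{obs:sq=po} so that $({\cal U}(P),\sqsubseteq)$ is a poset, obtain extensivity and idempotence from Proposition~\ref{prp:2/3-closure} together with~(\ref{eqn:doc-overline}), and establish $\sqsubseteq$-monotonicity by observing that $\doc(V)\subseteq\doc(U)$ forces $D(\doc(V))\subseteq D(\doc(U))$ and hence $\overline{U}\subseteq\overline{V}$. Your version is in fact more explicit than the paper's one-line monotonicity remark, and correctly isolates the monotonicity of $D$ as the step that converts the order-reversal in $\doc$ into order-preservation for $U\mapsto\overline{U}$; you might also note in passing that the hypothesis $U\subseteq V$ is not actually needed in this step, only $\doc(U)\supseteq\doc(V)$.
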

This can now be applied to monomial ideals of $R = K[x_1,\ldots,x_d]$.
Consider a monomial ideal $I$ of $R$. The elements of $M = \doc(I)$
always form an antichain in the monoid $G$. From $I$ we can form
the ideal $\overline{I} := I(G\setminus D(\doc(I)))$, the ideal generated
by $G\setminus D(\doc(I))$. Since each monomial ideal in $R$ is uniquely determined
by its set of monomial from $G$, the relation $\sqsubseteq$ from 
(\ref{eqn:sqsubseteq}) clearly carries over
to monomial ideals as well and is given by the same conditions as
in (\ref{eqn:sqsubseteq}), that is,
$I\sqsubseteq J \iff \doc(I)\supseteq \doc(J) \mbox{ and } I\subseteq J$,
for any monomial ideals $I$ and $J$ of $R$. Therefore we have the following
corollary from Theorem~\ref{thm:sqsubseteq-closure-op}.
\begin{corollary}
\label{cor:closure}
If $\cal{I}(R)$ is the set of all monomials ideal of $R$, then the map
${\cal{I}}(R)\rightarrow {\cal{I}}(R)$ given by 
$I\mapsto\overline{I} = I(G\setminus D(\doc(I)))$ is a closure operator
on the poset $(\cal{I}(R),\sqsubseteq)$. 
\end{corollary}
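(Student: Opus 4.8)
The plan is to reduce Corollary~\ref{cor:closure} directly to Theorem~\ref{thm:sqsubseteq-closure-op} by transporting everything along the order isomorphism between the multiplicative monoid $G = [x_1,\ldots,x_d]$ and the lattice $({\nats}_0^d,\leq)$, and along the bijective correspondence between monomial ideals of $R$ and upsets of $G$. So the very first step is to make precise the map $\Phi : \mathcal{I}(R) \to \mathcal{U}(G)$ sending a monomial ideal $I$ to its set of monomials $U(I)$, and to record the two facts we already have in the excerpt: that this is a bijection, and that it respects $\doc$, in the sense that $\doc(I)$ (computed as monomials in $\soc(I)\setminus I$) coincides with $\doc(U(I)) = \max(G\setminus U(I))$ — this is exactly the content of the remark following Claim~\ref{clm:M=doc} together with the observation that $\doc(I) = \max(G\setminus I)$.

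Next I would verify that $\Phi$ intertwines the two relations: for monomial ideals $I,J$, since $I\subseteq J$ if and only if $U(I)\subseteq U(J)$ (as each ideal is spanned as a $K$-vector space by its monomials), and $\doc(I)\supseteq\doc(J)$ if and only if $\doc(U(I))\supseteq\doc(U(J))$ by the previous step, we get $I\sqsubseteq J \Leftrightarrow U(I)\sqsubseteq U(J)$. Hence $\Phi$ is an isomorphism of posets $(\mathcal{I}(R),\sqsubseteq)\cong(\mathcal{U}(G),\sqsubseteq)$; in particular $(\mathcal{I}(R),\sqsubseteq)$ is a poset, which is what one needs to even speak of a closure operator on it. Then I would check that $\Phi$ carries the map $I\mapsto\overline{I} = I(G\setminus D(\doc(I)))$ to the map $U\mapsto\overline{U} = P\setminus D(\doc(U))$ of~(\ref{eqn:closure(U)}) with $P = G$: indeed $\Phi(\overline{I}) = U(G\setminus D(\doc(I)))$, and since $G\setminus D(\doc(I))$ is already an upset of $G$ (its complement $D(\doc(I))$ is a downset, by Claim~\ref{clm:up-down} or Claim~\ref{clm:M=doc}(ii)), we have $U(G\setminus D(\doc(I))) = G\setminus D(\doc(I)) = G\setminus D(\doc(U(I))) = \overline{U(I)}$.

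Finally, with $\Phi$ a poset isomorphism conjugating the operation $I\mapsto\overline I$ on $\mathcal{I}(R)$ to the operation $U\mapsto\overline U$ on $\mathcal{U}(G)$, the three closure-operator axioms — extensivity ($I\sqsubseteq\overline I$), monotonicity ($I\sqsubseteq J\Rightarrow\overline I\sqsubseteq\overline J$), and idempotency ($\overline{\overline I}=\overline I$) — transfer immediately from Theorem~\ref{thm:sqsubseteq-closure-op} applied with $P = G$. I expect the bookkeeping around the distinction between ``the set of monomials in $I$'' and ``$I$ as a $K$-subspace of $R$'', and the repeated use of the fact that $G\setminus D(\doc(I))$ is genuinely an upset so that applying $I(-)$ or $U(-)$ to it changes nothing, to be the only mildly delicate points; there is no real obstacle, since all the combinatorial work has already been done in Proposition~\ref{prp:2/3-closure} and Theorem~\ref{thm:sqsubseteq-closure-op}, and the corollary is essentially a translation statement.
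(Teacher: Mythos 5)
Your proof is correct and takes essentially the same approach as the paper: the paper also derives Corollary~\ref{cor:closure} directly from Theorem~\ref{thm:sqsubseteq-closure-op} via the bijection between monomial ideals and upsets of $G$, observing that $\sqsubseteq$ and the closure operation carry over under this identification. Your version merely spells out the conjugation by $\Phi$ more explicitly, but the argument is the same.
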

Since $R$ is Noetherian, every monomial ideal $I$ is generated
by finitely many monomials, say $n$ of them. Using the characterization of the docle $\doc(I)$
right before Definition~\ref{def:doc(I)}, as $\doc(I) = \soc(I)\setminus I$, 
each point $a\in \doc(I)$ is uniquely determined by 
exactly $d$ generators of $I$ (see Introduction and Section 2 of~\cite{A-Epstein})
and hence $|\doc(I)|$ is trivially bounded above by $\binom{n}{d}$, where
$n$ is minimal number of generators for the monomial ideal $I$, and is, 
in particular, a finite set. In fact, a more careful analysis of the 
maximum number $c_d(n)$ of $\doc(I)$ for a monomial ideal 
$I\subseteq R = K[x_1,\ldots,x_d]$ with at most $n$ generators
shows that $c_d(n) = \Theta(n ^{\lfloor d/2\rfloor})$ for large 
$n$~\cite{A-outside}. Hence, if $I$ is a monomial ideal of $R$, then
$\doc(I)$ is a finite subset of $G\subseteq R$ and therefore so
is $D(\doc(I))\subseteq G$. The set $G\setminus D(\doc(I))$ is therefore
a cofinite subset of $G$ and so the closure 
$\overline{I} = I(G\setminus D(\doc(I)))$ of $I$ is a zero-dimensional 
ideal of $R$. 

Conversely, if $I$ is a zero-dimensional ideal of $R$, then $G\setminus I$ 
is a finite set and so every element of $G\setminus I$ is a less than
or equal to some maximal element of $G\setminus I$, that is
$G\setminus I = D(\doc(I))$. That is to say, we have equality in
the subset relation (ii) $D(M)\subseteq P\setminus U$ in Claim~\ref{clm:M=doc}
for the poset $P = G$. Since then $G = I\cup D(\doc(I))$ is a partition 
of $G$ into an upset and downset, we get that
\[
I = G\setminus D(\doc(I)) = I(G\setminus D(\doc(I))) = \overline{I}
\]
is a closed set in the poset $(\cal{I}(R),\sqsubseteq)$. Hence we have 
the following.
\begin{corollary}
\label{cor:zero-dim=closed}
In the poset $(\cal{I}(R),\sqsubseteq)$ from Corollary~\ref{cor:closure},
a monomial ideal $I$ is closed if and only if it is zero-dimensional.
\end{corollary}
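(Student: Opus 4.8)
The plan is to prove the two implications separately, both by elementary manipulations of upsets and downsets of the monoid $G = [x_1,\ldots,x_d]$, using only the formula $\overline{I} = I(G\setminus D(\doc(I)))$ from Corollary~\ref{cor:closure} together with the finiteness of the docle recorded just above the statement.

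First I would handle the ``if'' direction: every zero-dimensional monomial ideal is closed. If $I$ is zero-dimensional, then $R/I$ is finite-dimensional over $K$, so the set $G\setminus U(I)$ of monomials outside $I$ is finite. By Claim~\ref{clm:up-down} it is a downset of $G$, and a finite downset coincides with the downset generated by its own maximal elements, so $G\setminus U(I) = D(\max(G\setminus U(I))) = D(\doc(I))$. Substituting into the formula for the closure gives $\overline{I} = I(G\setminus D(\doc(I))) = I(G\setminus(G\setminus U(I))) = I(U(I)) = I$, i.e. $I$ is a fixed point of the closure operator, hence closed.

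Conversely, suppose $I$ is closed, so that $I = \overline{I} = I(G\setminus D(\doc(I)))$. Here I would invoke the finiteness of $\doc(I)$ for an \emph{arbitrary} monomial ideal: since $R$ is Noetherian, $I$ has finitely many monomial generators, say $n$ of them, and as noted before the statement each element of $\doc(I)$ is pinned down by $d$ of these generators, so $|\doc(I)| \le \binom{n}{d}$ (in fact $|\doc(I)| = \Theta(n^{\lfloor d/2\rfloor})$ in the worst case), and in particular $\doc(I)$ is finite. Then $D(\doc(I))$ is a finite subset of $G$, its complement $G\setminus D(\doc(I))$ is cofinite in $G$, and the ideal generated by a cofinite set of monomials is zero-dimensional. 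Since $I$ equals that ideal, $I$ is zero-dimensional.

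The argument is essentially a bookkeeping exercise once these facts are in place; the only point needing care — and the one I would flag as the crux — is the step in the converse asserting that $\doc(I)$ is finite for \emph{every} monomial ideal $I$, not merely the zero-dimensional ones. This is precisely what forces the closed ideals to coincide with the zero-dimensional ones, and it relies on Noetherianity of $R$ together with the combinatorial bound on the size of the docle; the remaining manipulations only use the partition $G = U(I)\cup D(\doc(I))$, which is valid exactly in the zero-dimensional case.
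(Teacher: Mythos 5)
Your proof is correct and follows essentially the same route as the paper: for the reverse direction you use that $G\setminus U(I)$ is a finite downset and hence equals $D(\doc(I))$, giving $\overline{I}=I$; for the forward direction you invoke the finiteness of $\doc(I)$ for an arbitrary monomial ideal (via Noetherianity and the bound from \cite{A-Epstein}) to conclude that $\overline{I}$ is always zero-dimensional, so a closed $I$ must be. The only small imprecision is the closing remark that the remaining manipulations ``only use the partition $G = U(I)\cup D(\doc(I))$'': that partition is needed only in the ``zero-dimensional $\Rightarrow$ closed'' direction; the converse rests instead on cofiniteness of $G\setminus D(\doc(I))$, as you correctly argue earlier.
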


We conclude this section with a simple yet motivating example that
demonstrates that the assumption that $I$ is a monomial ideal is vital
for our combinatorial correspondence between the ideal and its inverse system.

\begin{example}
\label{exa:motivating}
  Consider the homogeneous ideal $I = (x^3, y^2 - xy)$ of $R = K[x,y]$,
the polynomial ring
over a given field $K$. Note that $R/I$ is
(i) local with maximal ideal $\overline{\m}$ where $\m = (x,y)$,
(ii) a finite dimensional graded $K$-algebra $R/I = \bigoplus_{i=0}^3(R/I)_i$ 
(of dimension $6$), and
(iii) $R/I$ is Gorenstein with its socle being simple and generated by
$\overline{x}^2\overline{y}$ both as a $K$-vector space and as an $R/I$
module.

For the set $M = \{x^2y\}\subseteq K[x,y]$, we know by
Observation~\ref{obs:I=J} that the unique zero dimensional monomial
ideal $J = (x^3,y^2)\subseteq K[x,y]$ with $\doc(J) = M$ is identical
to the ideal $\ann(M')$, and so $J = \{f\in K[x,y] : f\circ(x^2y) = 0\}$
where the $f$ acts as a differential operator by (\ref{eqn:xoy}).

That $\doc(J) = \{x^2y\}$ (modulo $J$)
means that the socle of $R/J$ is generated
by $\overline{x}^2\overline{y}$, as is the case with $R/I$ from above.
However, when the ideal $I$ acts as a differential operator, we note that
$(y^2 - xy)\circ(x^2y) = -2x \neq 0$, and so we have $I \neq\ann(M')$ here.
There is, however, a known way to describe homogeneous, zero-dimensional 
ideals in local Gorenstein rings in terms of differential operators.
We will discuss this in Section~\ref{sec:Gor-inv}.
Before that, however, we will in the next section discuss some algebraic
properties of ideals that have combinatorial implications.
\end{example}

\section{The docle from an algebraic viewpoint}
\label{sec:alg-observations}

Consider the monoid $\Gamma = \mathbb{Z}$ or $\mathbb{Z}^d$ for 
some $d\in \mathbb N$.  
Recall that a \emph{graded local} (or  \emph{gr-local}) ring is a $\Gamma$-graded 
ring $R$ such that the ideal $\m$ generated by the homogeneous nonunits of $R$ 
satisfies $\m \neq R$.  We also write ``$(R,\m)$ is gr-local'' in this case.

Recall that if $I$ is a proper homogeneous ideal in a gr-local ring $(R,\m)$, 
then $(R/I, \m/I)$ is also gr-local.

Key examples: 
\begin{itemize}
    \item $R=K[x_1, \ldots, x_d]$, with the usual $\mathbb{Z}$-grading (i.e. $R_j$ 
    is the $K$-vector space generated by all forms of degree $j$), is gr-local with 
    $\m = (x_1, \ldots, x_d)$.  The homogeneous ideals with respect to this gradings 
    are the ideals generated by homogeneous polynomials (i.e. $j$-forms for various 
    $j \in \mathbb{N}_0$).  This includes the monomial ideals but also many others.
    \item Let $R$ be as above, but assign the $\mathbb{Z}^d$-grading such that the 
    graded component of degree $(a_1, \ldots, a_n)$, where each $a_i \in \mathbb{N}_0$, 
    is just the 1-dimensional $K$-vector space generated by the monomial 
    $x_1^{a_1} \cdots x_d^{a_d}$, then again $R$ is gr-local with respect to $\mathbb{Z}^d$, 
    with $\m = (x_1, \ldots, x_d)$. The homogeneous ideals with respect to this grading 
    are precisely the monomial ideals.
    \item If $I$ is a monomial ideal in the ring $R$ above, then it is homogeneous 
    in both of the above 
    gradings, so $(R/I, \m/I)$ is also gr-local in both gradings.
\end{itemize}

Recall~\cite[Exercise 3.5]{Eisenbud} that if $I$ is a homogeneous ideal in a 
$\Gamma$-graded Noetherian ring, where $\Gamma=\mathbb Z$ or $\mathbb{Z}^n$, 
that all its associated primes are homogeneous with respect the $\Gamma$-grading, 
and there is an irredundant (or reduced) primary decomposition $I = \q_1 \cap \ldots \cap \q_h$ 
where each $\q_i$ is homogeneous with respect to $\Gamma$.
\begin{proposition}
\label{prp:idealanddoc}
Let $R=K[x_1, \ldots, x_d]$ and $\m = (x_1, \ldots, x_d)$.  
Let $I \subseteq J$ be monomial ideals in $R$.  Then there is a natural 
map $u: (I:\m)/I \rightarrow (J:\m)/J$, such that 
\begin{enumerate}
[label=\emph{(\alph*)}, ref=(\alph*)]
    \item\label{it:idealdocinj} $\doc(I) \subseteq \doc(J) \iff u$ is injective, and
    \item\label{it:idealdocsurj} $\doc(J) \subseteq \doc(I) \iff u$ is surjective.
\end{enumerate}
\end{proposition}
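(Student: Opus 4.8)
The plan is to make the natural map $u$ completely explicit and then read off both equivalences from the combinatorial description of the socle of a monomial ideal. Since $I \subseteq J$, there is an obvious $R$-linear (in fact $K$-linear) map $I \to J \hookrightarrow J$, and composing with $J \twoheadrightarrow (J:\m)/J$ — after first checking that $(I:\m)$ lands in $(J:\m)$, which is immediate since $\m \cdot (I:\m) \subseteq I \subseteq J$ — gives a well-defined map $(I:\m) \to (J:\m)/J$. An element of $I$ maps into $J$, hence to $0$, so this descends to $u \colon (I:\m)/I \to (J:\m)/J$. The key point, which I would establish first, is that because $I$ and $J$ are monomial ideals, all of $(I:\m)$, $I$, $(J:\m)$, $J$ are spanned by monomials, so $(I:\m)/I$ has a $K$-basis indexed by $\doc(I) = \soc(I)\setminus I$ (the monomials in $(I:\m)$ not in $I$), and similarly for $J$; moreover $u$ sends the basis monomial $\bar a$ for $a \in \doc(I)$ either to the basis monomial $\bar a$ of $(J:\m)/J$ (if $a \notin J$) or to $0$ (if $a \in J$). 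Thus $u$ is the $K$-linear map of monomial-basis vector spaces determined by the partial inclusion $\doc(I) \setminus J \hookrightarrow \doc(J)$.

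Granting this, part \ref{it:idealdocinj} follows: $u$ is injective iff no basis monomial $\bar a$ with $a \in \doc(I)$ is killed, i.e. iff $\doc(I) \cap J = \emptyset$. I would then argue $\doc(I) \cap J = \emptyset \iff \doc(I) \subseteq \doc(J)$. The forward direction: if $a \in \doc(I)$ and $a \notin J$, then $a \notin J$ while $\m a \subseteq I \subseteq J$, so $a \in \soc(J)\setminus J = \doc(J)$. The reverse direction: if $\doc(I)\subseteq \doc(J)$ then every $a\in\doc(I)$ lies in $\doc(J)$, hence in particular $a\notin J$, so $\doc(I)\cap J=\emptyset$. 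For part \ref{it:idealdocsurj}: $u$ is surjective iff every basis monomial of $(J:\m)/J$, i.e. every $b \in \doc(J)$, is hit. Since the image of $u$ is spanned by $\{\bar b : b \in \doc(I)\setminus J\}$ and we have just seen $\doc(I)\setminus J = \doc(I) \cap (\soc(J)\setminus J)$ — but actually more cleanly, the image is exactly the span of $\doc(I) \cap \doc(J)$ inside $(J:\m)/J$ — surjectivity holds iff $\doc(I) \cap \doc(J) = \doc(J)$, i.e. iff $\doc(J) \subseteq \doc(I)$; again one checks $b\in\doc(J)$ with $b\notin I$ forces $b\in\doc(I)$ since $\m b\subseteq J$ need not give $\m b\subseteq I$, so here one must use $\doc(J)\subseteq\doc(I)$ as the hypothesis rather than deduce it, matching the asymmetry of the two statements.

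The step I expect to be the main obstacle — really the only subtle point — is verifying carefully that $(I:\m)/I$ genuinely has the monomials of $\doc(I)$ as a $K$-basis, and that $u$ respects this basis structure as claimed. This rests on the fact that for a monomial ideal $I$, both $I$ and $(I:\m) = \soc_\m(I)$ are monomial ideals (the latter because $\m$ is a monomial ideal and colon ideals of monomial ideals by monomial ideals are monomial — or directly: $f \in (I:\m)$ iff every monomial appearing in $f$ lies in $(I:\m)$, since $x_i f \in I$ forces each $x_i \cdot(\text{monomial of }f) \in I$ as $I$ is monomial). Once this is in hand, the quotient $(I:\m)/I$ is spanned by the images of monomials in $(I:\m)\setminus I = \doc(I)$, these images are $K$-linearly independent because distinct monomials not in $I$ are linearly independent mod $I$, and the naturality of $u$ on monomials is the observation at the end of the previous paragraph. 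Everything else is a direct translation into the set-theoretic statements about $\doc(I)$, $\doc(J)$, and $J$ carried out above.
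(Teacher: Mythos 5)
Your proof is correct and takes essentially the same approach as the paper's: construct $u$ via the First Isomorphism Theorem and then reduce both equivalences to a check on monomials, exploiting the fact that $(I:\m)/I$ and $(J:\m)/J$ are multigraded with $K$-bases indexed by $\doc(I)$ and $\doc(J)$ respectively, and that $u$ sends a basis monomial $\bar a$ (for $a\in\doc(I)$) to $\bar a$ or to $0$ according to whether $a\notin J$. The only difference is presentational — you make the monomial $K$-basis and the image (the span of $\doc(I)\cap\doc(J)$) fully explicit, whereas the paper phrases the identical check in the language of nonzero multigraded elements.
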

\begin{proof}
First, let us construct the map $u$.  Since $I \subseteq J$, we have 
$(I:\m) \subseteq (J:\m)$.  Let $v: (I:\m) \ra (J:\m)/J$ be the composition of the 
inclusion map $(I:\m) \into (J:\m)$ with the natural surjection $(J:\m)/J$.  
Since $I \subseteq J$, we have $v(I) \subseteq I+J/J=0$, so $I \subseteq \ker(v)$, 
so we get a map $u: (I:\m)/I \ra (J:\m)/J$ of $R$-modules induced by the 
First Isomorphism Theorem.  It is given by $u(f+I) = f+J$.

Note that for any monomial ideal $H$, there is a one-to-one correspondence between  
$\doc(H)$ and the monomials in $(H:\m)$ that are not in $H$, as described earlier
right before Definition~\ref{def:doc(I)}.

Next, we prove \ref{it:idealdocinj}.  Suppose $\doc(I) \subseteq \doc(J)$.  
Since $u$ is a map of multigraded modules, to prove injectivity it is enough to show that 
$\ker(u)$ contains no nonzero multigraded elements. Accordingly, let $t \in (I:\m)/I$ 
be a nonzero multigraded element.  Then $t$ is represented by a monomial $s$ that is 
in $(I:\m) \setminus I$.  But then by the above paragraph, $s \in \doc(I)$, hence 
$s\in \doc(J)$ by assumption.  Thus, $s \notin J$, so $0 \neq s+J = u(t) \in (J:\m)/J$.

For the converse, suppose $u$ is injective.  Let $s \in \doc(I)$.  Then 
$0 \neq s+I \in (I:\m)/I$, so by assumption, $0 \neq u(s+I) = s+J \in (J:\m)/J$.  
Thus, $s \in \doc(J)$.

Lastly, we prove \ref{it:idealdocsurj}.  Suppose $\doc(J) \subseteq \doc(I)$.  
Since $u$ is a map of multigraded modules, to prove surjectivity it is enough to 
show that every nonzero multigraded element of $(J:\m)/J$ is 
in the image of $u$.  So let $t$ be such an element.  Then $t = s+J$, where 
$s\in \doc(J)$.  But then $s\in \doc(I)$ by assumption, so $0 \neq s+I \in (I:\m)/I$.  
Then we have $u(s+I) = s+J = t$, so $u$ is surjective.

Conversely, suppose $u$ is surjective.  Let $s \in \doc(J)$.  Then $0 \neq s+J \in (J:\m)/J$, 
so since $u$ is surjective, we have $s+J \in \im(u)$. Since $u$ is a multigraded map, 
$s+J = u(t)$ for some multigraded $t\in (I:\m)/I$.  But by definition of $u$, 
it follows that $t=s+I$.  Hence, $0 \neq s+I \in (I:\m)/I$, whence $s\in \doc(I)$.
\end{proof}

Next, consider the following interesting test for equality of artinian (or zero-dimensional) ideals.

\begin{proposition}\label{pr:equalifinj}
Let $(R,\m)$ be a local (resp. gr-local) Noetherian ring.  Let $I \subseteq J$ be proper (resp. proper homogeneous) ideals such that $R/I$ is (gr-)artinian.  Then $J=I$ if and only if the natural map $u: \frac{I:\m}I \ra \frac{J:\m}J$ is injective.
\end{proposition}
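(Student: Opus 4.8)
The plan is to prove both directions, with the ``only if'' direction being essentially trivial and the ``if'' direction being the heart of the matter. For the easy direction: if $J = I$ then $u$ is the identity map on $(I:\m)/I$, hence injective. So assume $u$ is injective and aim to deduce $J = I$. The key structural fact I would invoke is that since $(R,\m)$ is (gr-)local Noetherian and $R/I$ is (gr-)artinian, every nonzero (homogeneous) submodule of $R/I$ has nonzero socle; equivalently, the socle $(I:\m)/I = \soc(R/I)$ is an \emph{essential} submodule of $R/I$. This is the standard fact that over an artinian local ring the socle meets every nonzero submodule nontrivially.

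The main step is then the following: suppose for contradiction that $I \subsetneq J$. Consider the nonzero $R$-module $J/I$, which sits inside $R/I$ (indeed $J/I = (J/I) \subseteq R/I$ via the inclusion $I \subseteq J$). Since $R/I$ is (gr-)artinian and $J/I \neq 0$, by the essentiality of the socle there is a nonzero (homogeneous) element $t \in \soc(R/I) \cap (J/I)$; that is, $t = f + I$ with $f \in (I:\m)$, $f \notin I$, but $f \in J$. Now compute $u(t) = u(f+I) = f + J$. Since $f \in J$, we get $u(t) = 0$ in $(J:\m)/J$, while $t \neq 0$ in $(I:\m)/I$. This contradicts injectivity of $u$, so no such $J$ exists and $I = J$.

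I expect the only real subtlety to be bookkeeping in the graded case: one must check that ``essential socle'' holds in the category of \emph{graded} modules over a gr-local Noetherian ring, i.e.\ that every nonzero graded submodule of a gr-artinian $R/I$ contains a nonzero homogeneous socle element. This follows by the same argument as in the ungraded case applied to the graded structure (pass to a minimal nonzero graded submodule of $J/I$, which exists by the descending chain condition on graded submodules, and observe it is annihilated by $\m$, hence lies in $\soc(R/I)$), but it should be stated explicitly. I would phrase the proof so that the local and gr-local cases run in parallel, inserting ``(homogeneous)'' and ``(graded)'' qualifiers as needed, and cite the essentiality of the socle as the one external ingredient. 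Note also the consistency with Proposition~\ref{prp:idealanddoc}: in the monomial case, $u$ injective is equivalent there to $\doc(I) \subseteq \doc(J)$, and combined with $I \subseteq J$ this is exactly the hypothesis of Proposition~\ref{prp:idealanddoc}'s relatives that forces $I = J$ — so the present proposition is the natural algebraic generalization, shorn of the monomial hypothesis.
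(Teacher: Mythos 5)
Your proof is correct and is essentially the same as the paper's: both directions reduce to observing that $\ker u$ can be identified with $(0:_{J/I}\m)$, and then showing this forces $J/I=0$. The only cosmetic difference is that you invoke essentiality of the socle in a (gr-)artinian (gr-)local module as a black box, whereas the paper proves exactly that fact inline (take $h$ minimal with $\m^h(J/I)=0$; if $h\geq 1$ then $\m^{h-1}(J/I)$ gives a nonzero socle element), so the underlying computation is identical.
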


\begin{proof}
The kernel of $u$ is given by $\frac{(I:\m) \cap J}I$, 
which can be identified with $(0 :_{J/I} \m)$.  Thus, $(0:_{J/I}\m) = 0$.  But $N :=J/I$ is a 
finite length (homogeneous) $R$-module, so there is some nonnegative integer $h$ with $\m^h N =0$.  
Let $h$ be minimal and assume $h \geq 1$.  Then $\m^{h-1}N \neq 0$, so there is some 
$0 \neq z \in \m^{h-1}N$.  We have $\m z \subseteq \m^hN=0$, so $0 \neq z \in (0:_N\m) = 0$, 
a contradiction.  Hence, $h=0$, so $J/I = N = RN = \m^0 N = \m^h N = 0$.  
Thus, $J=I$.
\end{proof}

\begin{example}
Upon reading Proposition~\ref{pr:equalifinj}, it is natural to ask the question: if the map $u$ is \emph{surjective}, is this also enough to imply that $J=I$?  The answer is ``no''.  To see this, let $R = k[X,Y]$, $\m = (X,Y)$, $I = \m^3$, and $J = (X^2, Y^2)$. Then $(I:\m) = \m^2 = (X^2, XY, Y^2)$ and $(J:\m) = (XY)$.  Then the map $u$ is surjective, as the image of $XY$ in $(I:\m)/I$ maps to the generator $\overline{XY}$ of the cyclic module $(J:\m)/J$. But it is not injective (as the images of $X^2$ and $Y^2$ in $(I:\m)/I$ map to zero in $(J:\m)/J$ since $X^2, Y^2 \in J$), and of course $I \neq J$.
\end{example}

Combining Propositions~\ref{prp:idealanddoc} and \ref{pr:equalifinj}, we obtain the following:
\begin{corollary}
\label{cor:poset-idealanddoc}
Let $R=K[x_1, \ldots, x_d]$ and $\m = (x_1, \ldots, x_d)$.  Let $I \subseteq J$ be artinian 
ideals with $\doc(I) \subseteq \doc(J)$.  Then $I=J$.
\end{corollary}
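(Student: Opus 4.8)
The plan is to deduce Corollary~\ref{cor:poset-idealanddoc} by chaining together the two preceding results, using $\doc(I) \subseteq \doc(J)$ to feed Proposition~\ref{prp:idealanddoc} and then feeding the conclusion into Proposition~\ref{pr:equalifinj}. First I would invoke Proposition~\ref{prp:idealanddoc}: since $I \subseteq J$ are monomial ideals in $R = K[x_1,\ldots,x_d]$ with $\m = (x_1,\ldots,x_d)$, there is a natural map $u: (I:\m)/I \ra (J:\m)/J$, and by part~\ref{it:idealdocinj} the hypothesis $\doc(I) \subseteq \doc(J)$ is equivalent to $u$ being injective. So $u$ is injective.

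Next I would apply Proposition~\ref{pr:equalifinj} in the graded (gr-local) case. We take $R = K[x_1,\ldots,x_d]$ with the $\mathbb{Z}^d$-grading described in the key examples of Section~\ref{sec:alg-observations}, so that $(R,\m)$ is gr-local, Noetherian, and the homogeneous ideals with respect to this grading are precisely the monomial ideals. Thus $I \subseteq J$ are proper homogeneous ideals in this grading. Since $I$ is artinian (zero-dimensional), $R/I$ is gr-artinian (it is finite length as a $K$-vector space, hence certainly of finite length as a graded $R$-module). Proposition~\ref{pr:equalifinj} then says that $J = I$ if and only if the natural map $u: (I:\m)/I \ra (J:\m)/J$ is injective; since we established injectivity of $u$ in the previous step, we conclude $I = J$.

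I do not anticipate a genuine obstacle here — the corollary is essentially a formal composition of two already-proved statements. The one point requiring a moment of care is the compatibility of hypotheses: Proposition~\ref{prp:idealanddoc} is stated for monomial ideals and constructs a specific map $u$ (given by $u(f+I) = f+J$), while Proposition~\ref{pr:equalifinj} is stated for proper homogeneous ideals in a (gr-)local Noetherian ring and refers to ``the natural map'' $u: (I:\m)/I \ra (J:\m)/J$. I would note explicitly that these are the same map — both are induced by the inclusion $(I:\m) \subseteq (J:\m)$ followed by the quotient by $J$ — so that injectivity transfers cleanly. One also needs ``$I \subseteq J$ are proper'': an artinian monomial ideal $I \subsetneq R$ is proper by hypothesis, and then $J$ proper follows since if $J = R$ then $\doc(J) = \emptyset$ while... actually $J = R$ would force $I = J = R$ only after the argument, so it is cleaner simply to observe that if $J = R$ then $(J:\m)/J = 0$, so injectivity of $u$ forces $(I:\m)/I = 0$, contradicting $\doc(I) \neq \emptyset$ for a proper artinian ideal; alternatively one may simply restrict attention to proper $J$, the case $J = R$ being vacuous or trivial. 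With these remarks in place the proof is a two-line citation.
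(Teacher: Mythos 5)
Your proposal is exactly the paper's proof: invoke Proposition~\ref{prp:idealanddoc}\ref{it:idealdocinj} to get injectivity of $u$ from $\doc(I)\subseteq\doc(J)$, then apply Proposition~\ref{pr:equalifinj} (in the $\mathbb{Z}^d$-gr-local setting, where monomial ideals are the homogeneous ones) to conclude $I=J$. The additional remarks about identifying the two maps $u$ and about properness are reasonable diligence but do not change the argument, which the paper itself gives in two lines.
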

\begin{proof}
By Proposition~\ref{prp:idealanddoc} the induced map  $u: (I:\m) /I \ra (J:\m)/J$ is injective. Then by Proposition~\ref{pr:equalifinj}, $I=J$.
\end{proof}
We then get the following combinatorial result, from algebraic methods.
\begin{corollary}
  If $P = \mathbb{N}_0^d$ is a poset and $U,V \subseteq P$ are cofinite upsets
  satisfying $U \subseteq V$ and $\doc(U) \subseteq \doc(V)$, 
then $U=V$.
\end{corollary}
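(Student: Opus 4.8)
The plan is to reduce the combinatorial statement about upsets of $\mathbb{N}_0^d$ directly to the already-established algebraic statement in Corollary~\ref{cor:poset-idealanddoc}, using the standard dictionary between upsets of the lattice $\mathbb{N}_0^d$ and monomial ideals of $R = K[x_1,\ldots,x_d]$ recorded in Section~\ref{sec:defs}. First I would invoke the order isomorphism between the additive monoid $\mathbb{N}_0^d$ and the multiplicative monoid $G = [x_1,\ldots,x_d]$, under which upsets of $P = \mathbb{N}_0^d$ correspond bijectively to monomial ideals of $R$; write $I$ and $J$ for the monomial ideals corresponding to the upsets $U$ and $V$ respectively. Since $U \subseteq V$ we immediately get $I \subseteq J$, and since $\doc$ of an upset (the set of maximal elements of the complement) matches $\doc$ of the corresponding monomial ideal under this same isomorphism (the Remark after Claim~\ref{clm:M=doc}), the hypothesis $\doc(U) \subseteq \doc(V)$ translates to $\doc(I) \subseteq \doc(J)$.

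The one point requiring a sentence of justification is that $U$ and $V$ being \emph{cofinite} upsets corresponds precisely to $I$ and $J$ being \emph{artinian} (zero-dimensional) monomial ideals. This is exactly the translation made explicit in Section~\ref{sec:socle-IS}: a monomial ideal $I$ is zero-dimensional iff $G \setminus U(I)$ is a finite set, i.e.\ iff the corresponding upset is cofinite. So I would state: since $U$ is cofinite, $P \setminus U$ is finite, hence $G \setminus I$ (the set of monomials not in $I$) is finite, so $I$ is artinian; likewise $J$ is artinian.

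Having set up the correspondence, the proof is then one line: apply Corollary~\ref{cor:poset-idealanddoc} to $I \subseteq J$ with $\doc(I) \subseteq \doc(J)$ to conclude $I = J$, and then transport back along the bijection between monomial ideals and upsets to conclude $U = V$. I do not anticipate any real obstacle here; the entire content of the corollary has already been done via the algebraic route (Propositions~\ref{prp:idealanddoc} and~\ref{pr:equalifinj}), and what remains is purely the bookkeeping of translating ``cofinite upset'' to ``artinian monomial ideal'' and ``$\doc$ of an upset'' to ``$\doc$ of the ideal.'' If anything, the only thing worth being careful about is making sure the reader sees that $\doc$ on the poset side and $\doc$ on the ideal side genuinely agree as sets of monomials (which is the content of the Remark following Claim~\ref{clm:M=doc}), so I would cite that remark explicitly rather than leave it implicit.
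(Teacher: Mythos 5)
Your proposal is correct and matches the paper's intent exactly: the paper gives no explicit proof of this corollary, treating it as an immediate transport of Corollary~\ref{cor:poset-idealanddoc} across the order isomorphism between $\mathbb{N}_0^d$ (with its upsets) and the monomial lattice $G = [x_1,\ldots,x_d]$ (with its monomial ideals), with ``cofinite upset'' corresponding to ``artinian monomial ideal'' and $\doc$ agreeing on both sides per the Remark after Claim~\ref{clm:M=doc}. You have simply written out the bookkeeping the paper leaves implicit, and you have it right.
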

Our next goal is the following theorem, which amounts to an algebraic generalization 
of what we know combinatorially for monomial ideals with common socles.
Recall that
\[
(I:\m^{\infty}) = \bigcup_{n\in\nats}(I:\m^n).
\]
\begin{theorem}
\label{thm:expandtomprimary}
Let $(R,\m)$ be a local  (resp. gr-local) Noetherian ring.  Let $I \subseteq R$ be an 
ideal (resp. a homogeneous ideal) with $(I :\m) \neq I$.  Then there exist ideals 
(resp. homogeneous ideals) $J,H$ such that $H$ is $\m$-primary, $I=J \cap H$, $(J:\m)=J$, 
and the natural map $u: \frac{(I:\m)}I \ra \frac{(H:\m)}H$ is an isomorphism.  
Moreover, there is only one such choice of $J$; namely, $J=(I:\m^\infty)$, and the set of ideals $H$ that fulfill the conditions form an antichain.
\end{theorem}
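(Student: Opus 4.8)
The plan is to take $J := (I:\m^\infty) = \bigcup_{n}(I:\m^n)$ directly as the ``saturated'' part, and to produce $H$ as an ``irredundant $\m$-primary companion'' via primary decomposition, then verify each claimed property in turn. First I would check that $J = (I:\m^\infty)$ satisfies $(J:\m) = J$: this is immediate because if $x\m \subseteq J$ then $x\m \subseteq (I:\m^n)$ for some $n$ (using that $R$ is Noetherian, so the ascending chain $(I:\m) \subseteq (I:\m^2) \subseteq \cdots$ stabilizes and $\m$ is finitely generated), hence $x \in (I:\m^{n+1}) \subseteq J$. In the (gr-)local setting, $J$ is (homogeneous and) proper: since $(I:\m)\neq I$, $I$ is not $\m$-primary-trivial, and one checks $\m \notin \Ass(R/J)$ while $J \neq R$ because $R/I$ has some associated prime other than $\m$ contributing to a component not killed by a power of $\m$ — more carefully, $J = R$ would force $\m^n \subseteq I$ for some $n$, i.e. $I$ is $\m$-primary, but then every element of $R/I$ is killed by a power of $\m$ and by the argument in Proposition~\ref{pr:equalifinj} (applied with the roles reversed) $(I:\m)=I$, contradicting the hypothesis.

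Next I would construct $H$. Take an irredundant primary decomposition $I = \q_1 \cap \cdots \cap \q_h$ with each $\q_i$ homogeneous (legitimate in the graded case by the cited Exercise~3.5 in Eisenbud; in the local case it is standard). Since $(I:\m)\neq I$, the argument of Proposition~\ref{pr:equalifinj} shows $\m \in \Ass(R/I)$, so exactly one $\q_i$, say $\q_1$, is $\m$-primary (it is unique and irredundant since $\m$ is a minimal — indeed maximal — prime of $R/I$, hence isolated, so $\q_1$ is uniquely determined). Set $H := \q_1$ and note $J = \q_2 \cap \cdots \cap \q_h$ coincides with $(I:\m^\infty)$: indeed $(I:\m^\infty) = \bigcap_i (\q_i : \m^\infty)$, and $(\q_i:\m^\infty) = R$ precisely when $\sqrt{\q_i} = \m$, i.e. only for $i = 1$, while $(\q_i:\m^\infty) = \q_i$ for $i \geq 2$ since then $\m$ does not contain the radical. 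Thus $I = J \cap H$ with $J = (I:\m^\infty)$ as required, and $H$ is $\m$-primary. For the isomorphism statement, I would show $u: (I:\m)/I \to (H:\m)/H$ is bijective: since $I = J\cap H$ and $(J:\m)=J$, one computes $(I:\m) = (J:\m)\cap(H:\m) = J \cap (H:\m)$, so $(I:\m)/I = (J\cap(H:\m))/(J\cap H) \cong (J\cap(H:\m) + H)/H$ inside $R/H$; then using that $J + H = R$ (because $\sqrt{J}$ and $\sqrt{H}=\m$ are comaximal: no prime contains both $J$ and $\m$ as $\m \notin \Ass(R/J)$ and $\m$ is maximal — so $J + H$ is not contained in $\m$, hence equals $R$), one gets $J \cap (H:\m) + H = (H:\m) \cap (J + H) = (H:\m)$ by the modular law, so the map is onto; injectivity is clear since $\ker u = ((I:\m)\cap H)/I = (J \cap (H:\m)\cap H)/(J\cap H) = (J\cap H)/(J\cap H) = 0$.

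For uniqueness of $J$: suppose $I = J' \cap H'$ with $H'$ being $\m$-primary, $(J':\m)=J'$, and $u$ an isomorphism. Since $(J':\m)=J'$ we have $\m \notin \Ass(R/J')$, so $J' = (J':\m^\infty)$, whence $(I:\m^\infty) = (J':\m^\infty)\cap(H':\m^\infty) = J' \cap R = J'$. Thus $J = (I:\m^\infty)$ is forced. Finally, for the antichain claim: if $H_1 \subseteq H_2$ are two valid choices, then $I = J\cap H_1 \subseteq J \cap H_2 = I$ forces $J \cap H_1 = J \cap H_2$; combined with $J + H_i = R$ (shown above, since $\sqrt{H_i}=\m$), one gets $H_1 = H_1 \cap (J + H_2) = (H_1\cap J) + H_2 = (H_2 \cap J) + H_2 = H_2$ by modularity, so comparable valid $H$'s are equal, i.e. the set of valid $H$ forms an antichain.

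The main obstacle I expect is the bookkeeping around comaximality of $J$ and $H$ — specifically justifying $J + H = R$ cleanly in the graded-local case, where ``$R$'' is the irrelevant-ideal-complement rather than a field, and making sure the modular-law manipulations for surjectivity of $u$ and for the antichain argument are airtight; everything else is a routine assembly of primary-decomposition facts and the already-proved Proposition~\ref{pr:equalifinj}.
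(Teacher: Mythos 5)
The proposal has a genuine gap centered on two false claims: that the $\m$-primary component in a primary decomposition of $I$ is unique, and that $J + H = R$. Consider $R = K[x,y]$, $I = (x^2, xy)$ (the paper's own running example). Here $\m = (x,y)$ is an \emph{embedded} associated prime of $R/I$, not isolated (it contains $(x) \in \Ass(R/I)$), so the $\m$-primary component is far from uniquely determined --- you assert it is unique ``since $\m$ is a minimal prime of $R/I$,'' but $\m$ is minimal among primes of $R/I$ only when $I$ is itself $\m$-primary. Concretely, both $I = (x) \cap (x^2,y)$ and $I = (x) \cap \m^2$ are irredundant primary decompositions. Taking $H = \m^2$ gives $(H:\m)/H = \m/\m^2$, which is $2$-dimensional, while $(I:\m)/I = (x)/x\m$ is $1$-dimensional, so $u$ cannot be surjective. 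Thus choosing $H$ to be ``the'' $\m$-primary component from an arbitrary decomposition does not produce the isomorphism; the paper instead takes $H$ to be a \emph{maximal} element of $\mathcal{C} = \{\m\text{-primary }L : I = J \cap L\}$ (which exists by Noetherianity) and proves via Proposition~\ref{pr:MECs} that maximality in $\mathcal{C}$ is equivalent to $u$ being an isomorphism, and equivalent to the key containment $(I:\m) + H = (H:\m)$.

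Your surjectivity and antichain arguments both rest on $J + H = R$, which is false even for the good choice $H = (x^2,y)$: there $J + H = (x) + (x^2,y) = \m \neq R$. Your justification (``no prime contains both $J$ and $\m$'') is wrong because $J$ may well be contained in $\m$ --- the condition $\m \notin \Ass(R/J)$ does not preclude $J \subseteq \m$. The modular-law computation for surjectivity only needs the weaker containment $(H:\m) \subseteq J + H$, which is precisely condition (4) of Proposition~\ref{pr:MECs} and requires $H$ to be maximal in $\mathcal{C}$; and in the antichain argument the modular law is additionally applied with the containment in the wrong direction (you assume $H_1 \subseteq H_2$ but invoke the form that requires $H_2 \subseteq H_1$). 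Finally, the preliminary claim that $J$ is proper is false ($J = R$ is fine when $I$ is $\m$-primary, e.g.\ $I = \m$ has $(I:\m) = R \neq I$ yet $J = R$, $H = I$), though that error is harmless. Your uniqueness argument for $J$ is correct and slightly different from the paper's, but the construction of $H$, the verification that $u$ is an isomorphism, and the antichain claim all need the machinery of Proposition~\ref{pr:MECs} rather than a comaximality shortcut.
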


We start with the surely well-known preparatory lemma, whose proof we include for 
the convenience of the reader:

\begin{lemma}\label{lem:Nakart}
Let $(R,\m)$ be a local (resp. gr-local) ring and $M$ a (homogeneous) $R$-module such that for any (homogeneous) element $x\in M$, some power of $\m$ annihilates $x$.  Suppose $(0:_M \m) = 0$.  Then $M=0$.
\end{lemma}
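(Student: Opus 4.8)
The plan is to argue by contradiction, essentially abstracting the finite-length argument that already appears inside the proof of Proposition~\ref{pr:equalifinj}. First I would assume $M \neq 0$ and pick a witnessing element: in the local case any nonzero $x \in M$ will do, while in the gr-local case I would use that a nonzero graded module has a nonzero graded component, hence a nonzero \emph{homogeneous} element $x$. Then, invoking the hypothesis that some power of $\m$ annihilates $x$, I would let $h$ be the least integer with $\m^h x = 0$; since $\m^0 x = Rx \ni x \neq 0$, necessarily $h \geq 1$.

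Next I would descend one step. By minimality of $h$, $\m^{h-1} x \neq 0$; in the gr-local case $\m^{h-1}x$ is spanned by the homogeneous products $r_1 \cdots r_{h-1} x$ with each $r_j \in \m$ homogeneous, so it contains a nonzero homogeneous element $z$, while in the local case I simply take any nonzero $z \in \m^{h-1}x$. Then $\m z \subseteq \m \cdot \m^{h-1}x = \m^h x = 0$, so $z$ is a nonzero (homogeneous) element of $(0 :_M \m)$, contradicting the hypothesis $(0:_M\m) = 0$. This forces $M = 0$.

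I expect no real obstacle here; the only point requiring care is the graded bookkeeping, i.e.\ making sure that both the initial $x$ and the intermediate $z$ can be chosen homogeneous, which is immediate from $M$ being graded and $\m$ being a homogeneous ideal. I would also remark that the hypothesis ``some power of $\m$ annihilates each (homogeneous) element'' is exactly what guarantees that the minimum $h$ is well defined; in the intended application $M$ will be of the form $(I:\m^\infty)/I$ (or a finite-length quotient), for which this holds automatically, but phrasing the lemma at this level of generality is what lets it feed directly into the proof of Theorem~\ref{thm:expandtomprimary}.
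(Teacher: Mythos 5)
Your proof is correct and follows essentially the same route as the paper's: pick a (homogeneous) element, take the minimal $h$ with $\m^h x = 0$, and if $h\geq 1$ extract a nonzero (homogeneous) element of $\m^{h-1}x$ to contradict $(0:_M\m)=0$. The only cosmetic difference is that you phrase it as a contradiction starting from $M\neq 0$, while the paper shows directly that every $x$ satisfies $h=0$, hence $x=0$.
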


\begin{proof}
Choose any (homogeneous) element $x\in M$.  Let $h \in {\mathbb N}_0$ be minimal such that $\m^h x = 0$.  Suppose $h\geq 1$.  Then $\m^{h-1}x\neq 0$ (and is a graded submodule of $M$).  Thus, we can choose a nonzero (homogeneous) element $y$ of $\m^{h-1}x$.  Then $\m y = 0$, so $y \in (0:_M \m) = 0$, a contradiction.  Thus, $h=0$, so that $x\in Rx = \m^0x = \m^h x = 0$.  Thus, $M=0$.
\end{proof}

Next we prove the following lemma showing us how to increase our ideals.
\begin{lemma}
\label{lem:expandmpcomp}
Let $(R,\m)$ be a local (resp. gr-local) ring and let $J,H$ be proper ideals 
(resp. proper homogeneous ideals). Set $I := J\cap H$.  If $H+(I:\m) \neq (H:\m)$, 
then for any (homogeneous) element $t \in (H:\m) \setminus (H+(I:\m))$, we have 
$I = J \cap (H+(t))$.
\end{lemma}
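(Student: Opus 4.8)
The plan is to prove the set equality $J \cap (H + (t)) = J \cap H = I$ by showing each containment, the reverse one being essentially trivial and the forward one being the heart of the matter. Since $I = J \cap H \subseteq J$ and $I \subseteq H \subseteq H + (t)$, we immediately get $I \subseteq J \cap (H+(t))$. So the real work is proving $J \cap (H+(t)) \subseteq I$.

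To establish the forward containment, I would take an arbitrary element $x \in J \cap (H+(t))$ and write $x = w + rt$ for some $w \in H$ and $r \in R$; since $x, w \in J$ (note $H \supseteq I$ is not used here, but $w \in H$ and we want $w\in J$ — careful: $w$ need not be in $J$). Let me instead argue as follows: it suffices to show $rt \in H$, since then $x = w + rt \in H$, and combined with $x \in J$ this gives $x \in J \cap H = I$. So the goal reduces to: \emph{for any $r \in R$ with $w + rt \in J$ for some $w \in H$, we have $rt \in H$.} Suppose not, so $rt \notin H$. Since $t \in (H:\m)$, we have $\m t \subseteq H$, hence $\m(rt) \subseteq H$, so $rt \in (H:\m) \setminus H$. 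The idea is to exploit the minimality/defining property of $t$: $t \notin H + (I:\m)$. The plan is to derive that $rt \in H + (I:\m)$, and then bootstrap this to a contradiction with $t \notin H + (I:\m)$ — but this requires $r$ to be a unit, which it need not be, so in the gr-local case one should reduce to homogeneous $x$, $w$, $r$ and argue degree by degree.

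Here is the cleaner route I would actually pursue. Reduce (in the graded case) to $x$ homogeneous, so $w$ and $r$ may be taken homogeneous. The claim is that $rt \in H$. Suppose $rt \notin H$; since $\m(rt)\subseteq H$ as above, $rt$ represents a nonzero homogeneous element of $(H:\m)/H$. Now use that $x = w + rt \in J$ with $w \in H \subseteq J$... wait, $H \not\subseteq J$ in general. Correct reduction: $x \in J$ and $x - rt = w \in H$; so $rt \equiv x \pmod H$ and $rt \equiv -w \pmod J$... this still doesn't place $rt$ in $J$. The resolution: consider $rt$ modulo $H$; we want to show $rt \in H$. We know $x \in J$ and $x \in H + (t)$. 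Work in $R/H$: the image $\bar x$ lies in $(\bar t) = $ the cyclic module generated by $\bar t$, and $\bar x \in \bar J$ (image of $J$ in $R/H$). Since $\m \bar t = 0$ in $R/H$, the submodule $(\bar t) \subseteq (H:\m)/H$ is a homomorphic image of $R/\m = K$, so $(\bar t)$ is either $0$ or one-dimensional spanned by $\bar t$. If $\bar x \neq 0$ then $\bar x = c\bar t$ for a unit $c$, so $\bar t \in \bar J$, i.e. $t \in J + H$; then since also $t \in (H:\m)$, one checks $t \in (J + H : \m)$-type relations force $t \in H + (I:\m)$: indeed $\m t \subseteq H \subseteq I'$... this is exactly where I would need the characterization $(I:\m) = (J:\m)\cap(H:\m)$ or a direct computation, contradicting the choice of $t$. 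Hence $\bar x = 0$, i.e. $rt \in H$, so $x = w + rt \in H$, giving $x \in J \cap H = I$.

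The main obstacle is precisely this last maneuver: showing that $\bar t \in \bar J$ (equivalently $t \in J + H$) together with $t \in (H:\m)$ forces $t \in H + (I:\m)$, contradicting $t \notin H + (I:\m)$. The key identity to nail down is that if $t = j + h$ with $j \in J$, $h \in H$, and $\m t \subseteq H$, then $\m j = \m(t - h) \subseteq H$ and also $\m j \subseteq J$, so $\m j \subseteq J \cap H = I$, whence $j \in (I:\m)$; therefore $t = j + h \in (I:\m) + H$, the desired contradiction. With that observation in hand the argument closes. The graded bookkeeping (reducing to homogeneous $x$, noting $\bar t$ spans at most a line over $K$ because $\m\bar t = 0$) is routine once the algebraic core above is isolated, so I would state it briskly and spend the written proof on the identity $\m j \subseteq I$.
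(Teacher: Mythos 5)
Your final argument is correct, and it reaches the same conclusion, but it takes a genuinely different route from the paper's proof. Both proofs write an arbitrary $x \in J \cap (H+(t))$ as $x = w + rt$ with $w \in H$, and both reduce to showing that the coefficient $r$ cannot be a unit (so $rt \in H$ and then $x \in J\cap H = I$). The paper does this directly and without contradiction: it computes that for any $a \in \m$ one has $ax = aw + rat \in H \cap J = I$, so $x \in (I:\m)$; hence $rt = x - w \in H + (I:\m)$, and if $r$ were a unit this would put $t$ itself in $H + (I:\m)$, which is forbidden. You instead pass to $R/H$, exploit that $(\bar t)$ is cyclic with $\ann(\bar t) \supseteq \m$ hence is a $K$-line, and conclude that $\bar x \neq 0$ would force $r$ to be a unit and so $\bar t \in \bar J$, i.e.\ $t \in J + H$; you then isolate the key identity that $t = j + h$ ($j\in J$, $h\in H$) together with $\m t \subseteq H$ gives $\m j \subseteq J \cap H = I$, hence $j \in (I:\m)$ and $t \in H + (I:\m)$, the desired contradiction. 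The common algebraic core — that $\m$ times a suitable element lands in $J \cap H = I$ — is applied by the paper to the arbitrary element $x$ itself, whereas you apply it to the $J$-component of a putative decomposition of $t$; your reduction through the quotient $R/H$ and the one-dimensional module $(\bar t)$ is extra scaffolding the paper avoids. Both are valid; the paper's version is a bit shorter and needs no contradiction at the top level, while your version isolates a reusable observation about when an element of $J + H$ lying in $(H:\m)$ must lie in $H + (I:\m)$.
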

\begin{proof}
Since $\m t \subseteq H$ but $t\notin H$, we have $\m \subseteq (H:t) \neq R$, so 
by (homogeneous) maximality of $\m$, we have $\m = (H:t)$.
Now, let $\alpha \in J \cap (H+(t))$.  (In the graded case, make sure to choose 
$\alpha$ to be a homogeneous element.)  Then $\alpha = y+ct$ for some (homogeneous) 
$y \in H$ and (homogeneous) $c\in R$.  For any $a\in \m$, we have $at \in H$, so 
$a\alpha = ay + cat \in J\cap H = I$.  Thus, $\alpha \in (I:\m)$.  But $t \notin H+(I:\m)$, 
and $ct = -y+\alpha \in H+(I:\m)$, so $c$ is not a unit.  Since $(S,\m)$ is local 
(resp. gr-local and $c$ is homogeneous), it follows that $c\in \m = (H:t)$.  
Thus, $ct \in (H:t)t \subseteq H$.  Therefore, $\alpha = y+ct \in J\cap HJ = I$.  
Since $\alpha \in J \cap (H+(t))$ was arbitrary (resp. was an arbitrary homogeneous element, 
and $J \cap (H+(t))$ is homogeneous), it follows that $J \cap (H+(t)) \subseteq I$.  
But $I \subseteq J \cap H \subseteq J \cap (H+(t))$.  Hence, $I = J\cap(H+(t))$.
\end{proof}

Next, we have the following result equating a number of different concepts.
\begin{proposition}\label{pr:MECs}
Let $(R,\m)$ be a local (resp. gr-local) ring.  Let $I$ be a (homogeneous) ideal with $I \neq (I:\m)$, and let $I = J \cap H$, where $J,H$ are (homogeneous) ideals such that $J=(J:\m)$ and $H$ is $\m$-primary.  Let $\mathcal{C} := \{G \subseteq R \mid G$ (homogeneous) $\m$-primary ideal with $I=J \cap G\}$.  Let $\varphi:  \frac{I:\m}I \ra \frac{H:\m}H$ be the induced map. The following are equivalent: 
\begin{enumerate}
    \item\label{it:Hmax} $H$ is a maximal element of $\mathcal{C}$.
    \item\label{it:phisurj} $\varphi$ is surjective.
    \item\label{it:phiiso}  $\varphi$ is an isomorphism.
    \item\label{it:formula} $(I:\m) + H = (H:\m)$.
\end{enumerate}
\end{proposition}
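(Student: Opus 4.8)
The plan is to settle the three ``analytic'' conditions — surjectivity of $\varphi$, $\varphi$ being an isomorphism, and $(I:\m)+H=(H:\m)$ — in one go by identifying $\ker\varphi$ and $\im\varphi$, and then to hook the maximality condition (\ref{it:Hmax}) onto them by two contrapositive arguments. The gadget used throughout is a \emph{saturation trick}: if $f\in(I:\m)$ then $\m f\subseteq I\subseteq J$, and $J=(J:\m)$ then forces $f\in J$. Applied with the evident inclusion $I\subseteq(I:\m)\cap H$, this gives $(I:\m)\cap H = J\cap H = I$, so $\ker\varphi=\bigl((I:\m)\cap H\bigr)/I=0$; hence $\varphi$ is \emph{always} injective and (\ref{it:phisurj}) $\Leftrightarrow$ (\ref{it:phiiso}). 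Since $\varphi(f+I)=f+H$, we get $\im\varphi=\bigl((I:\m)+H\bigr)/H\subseteq(H:\m)/H$, so $\varphi$ is onto exactly when $(I:\m)+H=(H:\m)$; hence (\ref{it:phisurj}) $\Leftrightarrow$ (\ref{it:formula}).

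To link in (\ref{it:Hmax}), I would prove ``(\ref{it:Hmax}) $\Rightarrow$ (\ref{it:formula})'' contrapositively. Assuming $(I:\m)+H\neq(H:\m)$, choose a (homogeneous, in the graded case) element $t\in(H:\m)\setminus\bigl(H+(I:\m)\bigr)$; failure of (\ref{it:formula}) also forces $J\neq R$, so Lemma~\ref{lem:expandmpcomp} applies and yields $I=J\cap(H+(t))$. Since $t\notin H$ we have $H\subsetneq H+(t)$, and $H+(t)$ is again $\m$-primary because $H$ being $\m$-primary gives $\m^n\subseteq H+(t)$ for some $n$ while $t\in(H:\m)\subseteq\m$ (so $H+(t)\subseteq\m$ is proper) — here one must first rule out the degenerate case $H=\m$, which a short case check on whether $J\subseteq\m$ does, using the hypotheses $(I:\m)\neq I$ and failure of (\ref{it:formula}). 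Thus $H+(t)\in\mathcal{C}$ properly contains $H$, so $H$ is not maximal in $\mathcal{C}$.

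For the reverse implication ``(\ref{it:formula}) $\Rightarrow$ (\ref{it:Hmax})'', I would argue contrapositively once more: let $H\subsetneq G$ with $G\in\mathcal{C}$. Since $H$ is $\m$-primary, some $\m^n\subseteq H$, so every element of the nonzero module $G/H$ is killed by a power of $\m$, and Lemma~\ref{lem:Nakart} forces $(0:_{G/H}\m)\neq 0$; pick a nonzero (homogeneous) element $g+H$ of it, so $g\in G\setminus H$ with $\m g\subseteq H$, i.e.\ $g\in(H:\m)$. The crux is that $g\notin H+(I:\m)$: if $g=h+f$ with $h\in H$ and $f\in(I:\m)$, then $f=g-h\in G$, and the saturation trick puts $f\in J\cap G=I\subseteq H$, whence $g=h+f\in H$, a contradiction. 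So $(I:\m)+H\subsetneq(H:\m)$, i.e.\ (\ref{it:formula}) fails, which completes the circle (\ref{it:Hmax}) $\Leftrightarrow$ (\ref{it:formula}) $\Leftrightarrow$ (\ref{it:phisurj}) $\Leftrightarrow$ (\ref{it:phiiso}).

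The only place where there is genuine content, as opposed to bookkeeping, is this last step: manufacturing the socle element of $G/H$ is routine, but the decisive move is to exploit its membership in the \emph{larger} ideal $G$ (together with $I=J\cap G$) and the saturation property $J=(J:\m)$ to force it back inside $H$. Everything else is either formal, once $\ker\varphi$ and $\im\varphi$ are computed, or is delivered by Lemma~\ref{lem:expandmpcomp} — the only mild nuisance there being the verification that $H+(t)$ remains proper and $\m$-primary, which is exactly where the hypotheses at hand are needed to exclude the degenerate case $H=\m$.
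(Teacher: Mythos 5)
Your proof is correct and follows essentially the same path as the paper: injectivity of $\varphi$ via the saturation trick $(I:\m)\subseteq J$, the identification $\im\varphi=\bigl((I:\m)+H\bigr)/H$ giving (2)$\Leftrightarrow$(3)$\Leftrightarrow$(4), and then (1)$\Leftrightarrow$(4) by two contrapositives using Lemma~\ref{lem:expandmpcomp} and Lemma~\ref{lem:Nakart} respectively. Your extra care in ruling out $J=R$ and $H=\m$ (so that $t\in\m$ and $H+(t)$ is a proper $\m$-primary ideal) actually patches a point the paper leaves implicit when it asserts $H\subseteq H+(t)\subseteq\m$, but the underlying argument is the same.
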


\begin{proof}
First note that $\varphi$ is injective under these hypotheses.  Indeed, let $x \in (I:\m)\cap H$.  Then since $I \subseteq J$ and $J = (J:\m)$, we have $x\in J$.  Since also $x\in H$, we have $x\in J \cap H = I$.  Thus, $(I:\m)\cap H = I$. thus, $\ker \varphi = \frac{(I:\m) \cap H}I = \frac II = 0$.  It follows that (\ref{it:phisurj}) $\iff$ (\ref{it:phiiso}).

(\ref{it:Hmax}) $\implies$ (\ref{it:formula}): We prove the contrapositive.  Suppose $(I:\m)+H \neq (H:\m)$.  Let $t\in (H:\m) \setminus (I:\m)+H$, and in the graded case make sure to choose $t$ homogeneous.  Then by Lemma~\ref{lem:expandmpcomp}, we have $J \cap (H+(t)) = I$ and clearly $H \subsetneq H+(t)$.  Moreover, $H+(t)$ is $\m$-primary.  This follows from the facts that ($H+(t)$ is homogeneous, that) $\m$ is the only (homogeneous) prime ideal to contain $H$, and $H  \subseteq H+(t) \subseteq \m$.  Hence $H+(t) \in \mathcal C$ and strictly contains $H$, so $H$ is a nonmaximal element of $\mathcal C$.

(\ref{it:formula}) $\implies$ (\ref{it:phisurj}): Let $\alpha \in \frac{H:\m}H$. Choose $u\in H:\m$ so that $\bar u = \alpha$.  Then $u=v+w$ for some $v\in (I:\m)$ and $w\in H$.  Then $\varphi(v+I) = v+H = v+w+H = u+H = \alpha$.

(\ref{it:phisurj}) $\implies$ (\ref{it:formula}): It is clear that $(I:\m)+H \subseteq (H:\m)$.  Conversely, let $x\in (H:\m)$.  Then $x+H \in \frac{H:\m}H =\im(\varphi)$.  That is, there is some $y \in (I:\m)$ with $\varphi(\bar y) = \bar x$.  That is, $y+H = x+H$, so $x\in y+H$.  Since $y \in (I:\m)$, it follows that $x \in y+H \subseteq (I:\m)+H$.

(\ref{it:formula}) $\implies$ (\ref{it:Hmax}): Let $G\in \mathcal{C}$ with $H \subseteq G$.  Then: 
\[
(H:_G\m) = (H:\m) \cap G = (H + (I:\m)) \cap G =  H + ((I:\m) \cap G)\subseteq H+(J \cap G) = H+I = H.
\]
Thus, $(0:_{G/H} \m) = \frac{(H:_G\m)}H = \frac HH = 0$.  But since $H$ is $\m$-primary, some power of $\m$ kills $R/H$, and thus also $G/H$.  Then by Lemma~\ref{lem:Nakart}, $G/H=0$, whence $G=H$.
\end{proof}

\begin{proof}[Proof of Theorem~\ref{thm:expandtomprimary}]
Since $(I:\m) \neq I$, there is some (homogeneous) $x \in (I:\m) \setminus I$.  
Then $\m \subseteq (I:x) \neq R$, so by 
maximality of $\m$ (among homogeneous ideals), we have $\m = (I:x)$.  Thus, $\m \in \Ass_R(R/I)$ is an associated
prime ideal of $R/I$.  Accordingly, there is some irredundant primary decomposition 
$I=\q_1 \cap \ldots \cap \q_h$ of $I$ such that the last one $\q_h$ is $\m$-primary (which can be chosen with all $\q_i$ homogeneous by \cite[Exercise 3.5]{Eisenbud}).  
Set $J := \q_1 \cap \cdots \cap \q_{h-1}$.

To see that $J=(J:\m)$, let $y\in (J:\m)$.  Then for each $1\leq i \leq h-1$, we have 
$\m y \subseteq \q_i$.  Since $\m \nsubseteq \sqrt{\q_i}$, choose 
$a\in \m \setminus \sqrt{\q_i}$.  Then since $ay \in \q_i$, $a \notin \sqrt{\q_i}$,
and $\q_i$ is primary, it follows that $y\in \q_i$.  Since this holds for all $i\leq h-1$, 
we have $y\in \bigcap_{i=1}^{h-1} \q_i=J$.

Next, let $\mathcal C := \{$(homogeneous) $\m$-primary ideals $L \mid I = J \cap L\}$.
We have $\q_h \in \mathcal C$, so ${\mathcal C} \neq \emptyset$.  Since $R$ is Noetherian, $\mathcal C$ 
admits a maximal element; call it $H$.  Then by Proposition~\ref{pr:MECs}, $u$ is an isomorphism.  On the other hand, by the same proposition, any ideal in $\mathcal C$ that satisfies the conditions will be maximal in $\mathcal C$, so they form an antichain.

For the final statement, recall that $J = (J:\m)$, as we showed earlier in the proof.
Conversely, let $J,H$ be arbitrary (homogeneous) ideals with $H$ $\m$-primary such 
that $I=J\cap H$ and $J = (J:\m)$.  There is some $h$ with $\m^h \subseteq H$.  
So if $x\in J$, then $\m^h x \subseteq J \cap H = I$, so that $x \in (I:\m^h) \subseteq (I:\m^\infty)$.  
Hence, $J \subseteq (I:\m^\infty)$. For the reverse containment, let $y \in (I:\m^\infty)$.  
Then there is some nonnegative integer $s$ such that $\m^s y \subseteq I \subseteq J$.  
Let $\ell \in \mathbb{N}_0$ be minimal with $\m^\ell y \subseteq J$ and suppose 
$\ell\geq 1$. Then $y \in (J:\m^\ell) = ((J:\m):\m^{\ell-1}) = (J:\m^{\ell-1})$, 
contradicting the minimality of $\ell$.  Thus $\ell=0$, so that $y\in J$.  
Hence, $(I:\m^\infty) \subseteq J$. 
\end{proof}

In the polynomial ring case, we then obtain the following result by the above theorem in conjunction with our earlier work in~\cite{A-Epstein}.

\begin{corollary}
\label{cor:uniqueidealdecomp}
Let $R = K[x_1, \ldots, x_d]$.  Let $I$ be a monomial ideal with a nonempty docle, so we have $\doc(I) \neq \emptyset$.  Then there is a unique pair of monomial ideals $J$, $H$ such that $I=J \cap H$, $H$ is zero-dimensional, $\doc(H) = \doc(I)$, and $\doc(J) = \emptyset$.
\end{corollary}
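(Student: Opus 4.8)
The plan is to deduce this from Theorem~\ref{thm:expandtomprimary} by specializing to the $\mathbb{Z}^d$-grading on $R = K[x_1,\ldots,x_d]$, under which the homogeneous ideals are exactly the monomial ideals and $(R,\m)$ is gr-local with $\m = (x_1,\ldots,x_d)$. First I would note that, by the monomial characterization of the socle recorded right before Definition~\ref{def:doc(I)}, a monomial ideal $I$ satisfies $(I:\m) \neq I$ precisely when $\doc(I) \neq \emptyset$; so the hypothesis $\doc(I)\neq\emptyset$ is exactly what is needed to invoke Theorem~\ref{thm:expandtomprimary}. That theorem produces homogeneous --- hence monomial --- ideals $J$ and $H$ with $H$ being $\m$-primary, $I = J \cap H$, $(J:\m) = J$, and the natural map $u\colon (I:\m)/I \to (H:\m)/H$ an isomorphism; moreover it pins down $J = (I:\m^\infty)$ as the only possible choice of $J$.

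Next I would translate each algebraic condition into the combinatorial language of the statement. Since $I = J \cap H \subseteq H$, Proposition~\ref{prp:idealanddoc} applied to the inclusion $I \subseteq H$ says that $u$ is injective if and only if $\doc(I) \subseteq \doc(H)$, and surjective if and only if $\doc(H) \subseteq \doc(I)$; as $u$ is an isomorphism, $\doc(H) = \doc(I)$, which is nonempty, so in particular $H \neq R$. That $H$ is $\m$-primary in $K[x_1,\ldots,x_d]$ means $\sqrt H = \m$, that is, $R/H$ is artinian, that is, $H$ is zero-dimensional. And $(J:\m) = J$ says there is no monomial in $(J:\m)\setminus J$, that is, $\doc(J) = \emptyset$. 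This establishes existence.

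For uniqueness, suppose $(J_1,H_1)$ and $(J_2,H_2)$ both satisfy the conclusion. Reversing the translations above --- $\doc(J_i) = \emptyset \Leftrightarrow (J_i:\m)=J_i$, $H_i$ zero-dimensional $\Leftrightarrow H_i$ $\m$-primary, and $\doc(H_i) = \doc(I) \Leftrightarrow u_i$ an isomorphism by Proposition~\ref{prp:idealanddoc} --- shows each pair satisfies the hypotheses of Theorem~\ref{thm:expandtomprimary}, so by its ``moreover'' clause $J_1 = (I:\m^\infty) = J_2$. For the $H$-component I would instead invoke Theorem~\ref{thm:eye-catcher}: each $H_i$ is a zero-dimensional monomial ideal whose docle is the fixed nonempty antichain $\doc(I)$, and Theorem~\ref{thm:eye-catcher} says there is exactly one such ideal, so $H_1 = H_2$. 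Hence the pair $(J,H)$ is unique.

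The step I expect to demand the most care is the dictionary itself: checking that the $u$ of Theorem~\ref{thm:expandtomprimary} is literally the map $f+I \mapsto f+J$ of Proposition~\ref{prp:idealanddoc}, that ``$\m$-primary'' and ``zero-dimensional'' coincide for a proper monomial ideal of a polynomial ring, and that Proposition~\ref{prp:idealanddoc} carries no artinian hypothesis on $I$ or $H$. One small edge case deserving a sentence is when $I$ is itself zero-dimensional: then $J = (I:\m^\infty) = R$ and $H = I$, and $\doc(J) = \emptyset$ holds vacuously, so the argument is unaffected.
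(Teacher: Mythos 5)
Your proof is correct and follows essentially the same route as the paper's: apply Theorem~\ref{thm:expandtomprimary} in the $\mathbb{Z}^d$-grading (where homogeneous means monomial), translate the conditions $(J:\m)=J$ and ``$u$ is an isomorphism'' into $\doc(J)=\emptyset$ and $\doc(H)=\doc(I)$ via Proposition~\ref{prp:idealanddoc}, and get uniqueness of $J$ from the theorem and of $H$ from Theorem~\ref{thm:eye-catcher}. Your extra attention to the dictionary and to the edge case where $I$ is itself zero-dimensional (so $J=R$) is reasonable but not essential, and does not change the argument.
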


\begin{proof}
In the $\mathbb{Z}^n$-grading given by monomial exponents, ``homogeneous ideal'' means ``monomial ideal''.  Moreover, $\doc(J)=\emptyset \iff (J:\m)=J$ (essentially by definition), and $\doc(H) = \doc(I) \iff$ the natural map $\frac{I:\m}I \ra \frac{H:\m}H$ is an isomorphism (by Proposition~\ref{prp:idealanddoc}).  Then Theorem~\ref{thm:expandtomprimary} provides the pair $J,H$ as in the statement, with $J$ unique.  Moreover, $H$ is also unique by Theorem~\ref{thm:eye-catcher}.
\end{proof}

We obtain the following combinatorial corollary to the algebraic work above.
\begin{corollary}
\label{cor:uniqueupsetdecomp}
Let $P = \mathbb{N}_0^n$.  Let $U \subset P$ be an upset such that 
$\doc(U) \neq \emptyset$.  Then there is a unique representation of $U$ in the 
form $U=V \cap W$, where 
$V,W$ are upsets, $V$ is cofinite,
$\doc(W) = \emptyset$, and 
$\doc(U) = \doc(V)$.
\end{corollary}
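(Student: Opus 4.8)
The plan is to transport the unique ideal decomposition of Corollary~\ref{cor:uniqueidealdecomp} across the monoid isomorphism $\nats_0^d \cong G = [x_1,\dots,x_d]$ and the correspondence between upsets of $\nats_0^d$ and monomial ideals of $R = K[x_1,\dots,x_d]$ established in Section~\ref{sec:defs}. Recall from that section that an upset $U \subseteq \nats_0^d$ corresponds bijectively to a monomial ideal $I = I(U)$ of $R$, that intersections of upsets correspond to intersections of the associated monomial ideals, that $\doc$ on upsets matches $\doc$ on monomial ideals (by the Remark after Claim~\ref{clm:M=doc}), and that an upset $V$ is cofinite if and only if $I(V)$ is zero-dimensional.

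First I would observe that, since $\doc(U) \neq \emptyset$, the corresponding monomial ideal $I = I(U)$ has nonempty docle, so Corollary~\ref{cor:uniqueidealdecomp} applies and yields monomial ideals $J, H$ with $I = J \cap H$, $H$ zero-dimensional, $\doc(H) = \doc(I)$, and $\doc(J) = \emptyset$, with $J$ unique and $H$ unique. Let $V$ and $W$ be the upsets of $\nats_0^d$ corresponding to $H$ and $J$ respectively. Then $U = V \cap W$ (intersection of monomial ideals corresponds to intersection of upsets), $V$ is cofinite (because $H$ is zero-dimensional), $\doc(W) = \doc(J) = \emptyset$, and $\doc(V) = \doc(H) = \doc(I) = \doc(U)$. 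This produces a decomposition of the required form.

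For uniqueness, suppose $U = V' \cap W'$ is any decomposition with $V'$ cofinite, $\doc(W') = \emptyset$, and $\doc(V') = \doc(U)$. Translate back to monomial ideals: $H' = I(V')$ is zero-dimensional, $J' = I(W')$ satisfies $\doc(J') = \emptyset$ (equivalently $(J':\m) = J'$), $I = J' \cap H'$, and $\doc(H') = \doc(U) = \doc(I)$. By Proposition~\ref{prp:idealanddoc}, $\doc(H') = \doc(I)$ forces the natural map $(I:\m)/I \to (H':\m)/H'$ to be both injective and surjective, hence an isomorphism, so the pair $(J', H')$ satisfies all the hypotheses of the uniqueness clause of Theorem~\ref{thm:expandtomprimary} (in the $\mathbb{Z}^d$-grading where homogeneous ideals are monomial ideals). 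Therefore $J' = J$; and then $H' = H$ by the uniqueness in Corollary~\ref{cor:uniqueidealdecomp} (equivalently by Theorem~\ref{thm:eye-catcher}, since $H'$ is the unique zero-dimensional monomial ideal with $\doc(H') = \doc(I)$). Hence $V' = V$ and $W' = W$.

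The only point requiring a little care — the ``main obstacle'' in an otherwise routine translation — is making sure the dictionary entries invoked are exactly the ones proved in the excerpt: that cofiniteness of an upset is equivalent to zero-dimensionality of its ideal (this is Corollary~\ref{cor:zero-dim=closed}'s underlying fact, stated in Section~\ref{sec:defs} and again before Corollary~\ref{cor:closure}), and that $\doc(W) = \emptyset$ is genuinely equivalent to $(J:\m) = J$ for the monomial ideal $J = I(W)$, which holds because $\doc(J) = \emptyset$ means $\max(G \setminus U(J)) = \emptyset$, i.e. no monomial of $\soc(J)$ lies outside $J$. Once these equivalences are in hand, the statement is just Corollary~\ref{cor:uniqueidealdecomp} read through the order isomorphism.
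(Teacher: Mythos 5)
Your proof is correct and follows the same route the paper takes: Corollary~\ref{cor:uniqueupsetdecomp} is obtained simply by transporting Corollary~\ref{cor:uniqueidealdecomp} across the correspondence between upsets of $\nats_0^d$ and monomial ideals, matching cofinite $\leftrightarrow$ zero-dimensional, $\doc \leftrightarrow \doc$, and intersections $\leftrightarrow$ intersections. Your uniqueness argument is slightly more elaborate than necessary (you could just cite the uniqueness of the pair $(J,H)$ in Corollary~\ref{cor:uniqueidealdecomp} once you have verified the translated conditions, rather than re-deriving $J'=J$ from Theorem~\ref{thm:expandtomprimary}), but nothing is wrong.
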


Corollary~\ref{cor:uniqueidealdecomp} is particularly interesting, in that its analogue in the local case is \emph{false}.  Indeed, recall the following result, with notation changed to match ours here.

\begin{theorem}[{\cite[Theorem 2.8]{HRS-emb1}}]\label{thm:HRS}
Let $(R,\m)$ be a \emph{local} Noetherian ring. Let $I$ be a non-$\m$-primary ideal of $R$ such that $(I:\m)\neq I$.  Let $J$ be as in Theorem~\ref{thm:expandtomprimary} and $\mathcal C$ as in Proposition~\ref{pr:MECs}.  Then $I$ is the intersection of all the maximal elements of $\mathcal C$.
\end{theorem}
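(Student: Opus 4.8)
The plan is to prove the equality $N=I$, where $N := \bigcap\{H : H \text{ a maximal element of }\mathcal C\}$, by reducing it to a single containment. First I would record the two easy inclusions. Every $H\in\mathcal C$ satisfies $I = J\cap H\subseteq H$, so $I\subseteq N$. Next, $\mathcal C\neq\emptyset$ by Theorem~\ref{thm:expandtomprimary} (which produces an $\m$-primary $H_0$ with $I=J\cap H_0$) and $R$ is Noetherian, so $\mathcal C$ has maximal elements; intersecting the relations $J\cap H = I$ over all maximal $H$ gives $N\cap J = \bigcap_H (J\cap H) = I$. Hence it suffices to show $N\subseteq J$, for then $N = N\cap J = I$. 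I will also use that, since $I$ is not $\m$-primary, $J = (I:\m^\infty)$ is a proper ideal, so $J\subseteq\m$.

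To prove $N\subseteq J$, I would fix $x\notin J$ (hence also $x\notin I$) and exhibit a \emph{maximal} element $H$ of $\mathcal C$ with $x\notin H$. The first step is to produce \emph{some} $G\in\mathcal C$ with $x\notin G$: put $G_s := I + H_0\m^s$ for $s\geq 0$. Since a power of $\m$ lies in $H_0$, that same power times $\m^s$ lies in $G_s\subseteq H_0$, so each $G_s$ is a proper ideal with radical $\m$, i.e.\ $\m$-primary; and $I\subseteq G_s\subseteq H_0$ forces $J\cap G_s = J\cap H_0 = I$, so $G_s\in\mathcal C$. Reducing modulo $I$ and applying the Krull intersection theorem in $R/I$ gives $\bigcap_s G_s = I$, so $x\notin G := G_{s_0}$ for some $s_0$.

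The heart of the argument is upgrading $G$ to a maximal element of $\mathcal C$ that still avoids $x$. I would let $H$ be a maximal element (Noetherianity) of the nonempty family $\mathcal D := \{L\in\mathcal C : G\subseteq L,\ x\notin L\}$ and argue that $H$ is actually maximal in $\mathcal C$. Suppose not. Since $(I:\m)\neq I$, $I = J\cap H$, $(J:\m)=J$, and $H$ is $\m$-primary, Proposition~\ref{pr:MECs} applies, and non-maximality forces a proper inclusion $(I:\m)+H\subsetneq(H:\m)$. For any $t\in(H:\m)\setminus((I:\m)+H)$, Lemma~\ref{lem:expandmpcomp} gives $J\cap(H+(t)) = I$, and (exactly as in the proof of Proposition~\ref{pr:MECs}) $H+(t)$ is $\m$-primary, so $H+(t)\in\mathcal C$ strictly contains $H\supseteq G$; by maximality of $H$ in $\mathcal D$ this forces $x\in H+(t)$. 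Now I pass to the artinian local ring $\bar R := R/H$, write $\kappa=R/\m$ for its residue field, and $\bar y$ for images. From $x\in H+(t)$ with $\m t\subseteq H$ we get $\m x\subseteq H$, so $\bar x\in\soc(\bar R) = (H:\m)/H$; and since $\bar t$ is killed by $\m$, the ideal it generates is the line $\kappa\bar t$, so $\bar x\in\kappa\bar t$. Letting $t$ range over $(H:\m)\setminus((I:\m)+H)$, this says every element of $\soc(\bar R)\setminus W$ lies on the line $\kappa\bar x$, where $W := ((I:\m)+H)/H$; equivalently $\soc(\bar R) = W\cup\kappa\bar x$ (note $\bar x\neq 0$ since $x\notin H$). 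A vector space that is a union of two subspaces equals one of them, so either $\soc(\bar R)=W$ — impossible since $W\subsetneq\soc(\bar R)$ by Proposition~\ref{pr:MECs} — or $\soc(\bar R)=\kappa\bar x$, forcing $\dim_\kappa W<1$; but $W$ is isomorphic to $(I:\m)/I\neq 0$ (the natural map $(I:\m)/I\to(H:\m)/H$ is injective because $(I:\m)\cap H\subseteq J\cap H = I$), so $\dim_\kappa W\geq 1$, a contradiction. Hence $H$ is maximal in $\mathcal C$ and $x\notin H$, so $x\notin N$; this gives $N\subseteq J$ and therefore $N=I$.

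The hard part is this last step: it is easy to enlarge an $\m$-primary component away from $x$, but one must push the enlargement \emph{all the way to a maximal element of $\mathcal C$} without reabsorbing $x$, and it is precisely the one–dimensional socle/linear-algebra input — together with the hypothesis $(I:\m)\neq I$, which guarantees $\dim_\kappa W\geq 1$ — that closes this gap. Everything else is routine once Proposition~\ref{pr:MECs} and Lemma~\ref{lem:expandmpcomp} are available.
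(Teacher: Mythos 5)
The paper does not give a proof of this statement: it is quoted as Theorem~2.8 of \cite{HRS-emb1} and invoked as a foil for Corollary~\ref{cor:uniqueidealdecomp}, so there is no in-paper argument to compare your proposal against. That said, your proof is correct and is a self-contained derivation from the machinery the paper itself develops (Theorem~\ref{thm:expandtomprimary}, Proposition~\ref{pr:MECs}, Lemma~\ref{lem:expandmpcomp}), plus the Krull intersection theorem. The reduction to $N\subseteq J$ is clean: intersecting $J\cap H=I$ over the maximal $H$ gives $N\cap J=I$, and $J\subsetneq R$ because $I$ is not $\m$-primary. The ideals $G_s:=I+H_0\m^s$ all lie in $\mathcal{C}$ and intersect to $I$ by Krull intersection applied in $R/I$, so for each $x\notin J$ some $G\in\mathcal{C}$ misses $x$. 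The delicate step --- enlarging $G$ to a \emph{maximal} element of $\mathcal{C}$ that still misses $x$ --- is handled correctly: taking $H$ maximal in $\{L\in\mathcal{C}: G\subseteq L,\ x\notin L\}$, nonmaximality of $H$ in $\mathcal{C}$ together with Proposition~\ref{pr:MECs} and Lemma~\ref{lem:expandmpcomp} would force the $R/\m$-vector space $(H:\m)/H$ to be the union of its two proper subspaces $W:=((I:\m)+H)/H$ and $(R/\m)\bar x$, which is impossible over any field. The hypothesis $(I:\m)\neq I$ is used precisely where it must be: it guarantees $W\neq 0$, since the natural map $(I:\m)/I\to(H:\m)/H$ is injective (kernel $((I:\m)\cap H)/I$ with $(I:\m)\cap H\subseteq J\cap H=I$), and $W\neq 0$ is what rules out the case $(H:\m)/H=(R/\m)\bar x$.
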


\begin{example}
It is instructive to examine the case of the ``Emmy ideal'' $I=(x^2,xy)$, either in $K[\![x,y]\!]$ (as in the article \cite{HRS-emb1}) or in $K[x,y]$. In the former (i.e. no grading) case, the maximal elements of $\mathcal C$, i.e. the ``Maximal Embedded Components'', or MECs, of $I$ are fully analyzed in \cite[Section 3]{HRS-emb1}.  They are: $(x^2, y)$, and all ideals of the form $(x^2, xy, \lambda x + y^n)$ where $0\neq \lambda \in K$ and $n \geq 1$.  One sees quickly that intersecting all such MECs will yield $I$.

Now consider the $\mathbb Z$-graded case.  That is, $R=K[x,y]$, $I=(x^2, xy)$, and $\mathcal C$ consists of all ideals $H$ that appear in an intersection such as in Proposition~\ref{pr:MECs} and are homogeneous in the ordinary sense (i.e. may or may not be monomial ideals). Then the maximal elements of $\mathcal C$ are $(x^2, y)$ and also the ideals $(x^2, xy, \lambda x + y)$, where $0\neq \lambda \in K$.  The intersection of all these is $(x^2, xy, y^2)$.  So this case is really intermediate, in that there are lots of maximal elements, but they do not intersect down to $I$.

Finally, consider the $\mathbb Z^n = \mathbb Z^2$-graded case, where $\mathcal C$ consists of the \emph{monomial} ideals $H$ that appear in an intersection such as in Proposition~\ref{pr:MECs}. Then as shown above, there is a \emph{unique} maximal element of $\mathcal C$, which happens to be $(x^2,y)$.  Indeed, the intersection corresponding to Corollary~\ref{cor:uniqueidealdecomp} is $(x^2,xy) = (x) \cap (x^2, y)$.
\end{example}

\section{Homogeneous ideals in Gorenstein rings in terms of inverse systems}
\label{sec:Gor-inv}

The main purpose of this section is to establish the 
"if and only if" part in Corollary~\ref{cor:ann-iff-monomial}.
Namely, we demonstrate by explicit computations 
that in the case of an artinian Gorenstein ideal, the ideal is the inverse ideal 
of the sole monomial in its socle if and only if the ideal itself is monomial.

Most of what is discussed in this section is well-known in
a more abstract and general form in the literature,
\cite{Eisenbud},
\cite{Eisenbud-Buchsbaum},
\cite{CoxLittleSchenck} and 
\cite{Bruns-Herzog}, 
but to the best of our knowledge the more explicit, specific statements 
we deduce here have not appeared in the literature
before. It therefore seems worthwhile to write down and prove a few explicit 
observations and propositions for our deductions to come. In particular, we will 
carry out computations relating to the specific term order (or {\em monomial order}) LEX 
on the monomials in the monoid $[x_1,\ldots,x_d]$, which is a linear ordering that respects 
multiplication (see the definition immediately after Observation~\ref{obs:soc-monomial}).

As described in~\cite[p.~376]{Eisenbud}, 
a homogeneous, zero-dimensional
Gorenstein ideal of the polynomial ring $R = K[x_1,\ldots,x_d]$ has the
form of a colon ideal (also knows as an ideal quotient) $I = ((x_1^k,\ldots,x_d^k):p)$ 
where $p\in R$ is a homogeneous polynomial. The polynomial $p$ is sometimes called 
the {\em dual socle generator} of $R/I$. 
As $I$ contains $(x_1^k,\ldots,x_d^k)$, the only maximal ideal of $R$
containing $I$ is $\m = (x_1,\ldots,x_d)$.
Since $I$ is homogeneous, it follows that $R/I$ is a graded finite dimensional $K$-algebra.

We will now describe an explicit basis for $R/I$ as a vector space over $K$
and some computational properties that we will use.

For $N\in\nats$, let
$\Sigma(N) = \{\tilde{i}\in {\nats}_0^d : i_1 + \cdots + i_d = N\}$,
where $\tilde{i} = (i_1,\ldots,i_d)$. 
Suppose $p\in R$ is a homogeneous polynomial of degree $N\in\nats$
such that $p\neq 0 \pmod{(x_1^k,\ldots,x_d^k)}$, so
$p = \sum_{\tilde{i}\in s_p(N)}a_{\tilde{i}}{\tilde{x}}^{\tilde{i}}$ for
some nonempty subset $s_p(N) \subseteq \Sigma(N)$ where $a_{\tilde{i}}\neq 0$
for each $\tilde{i}\in s_p(N)$. If $\tilde{1} = (1,1,\ldots,1)\in {\nats}^d$
then we can, needless to say, assume $\tilde{i}\leq (k-1)\tilde{1}$ in the
componentwise partial order on ${\ints}^d$.
\begin{claim}
\label{clm:ij}
Let $\tilde{i},\tilde{j}\in ({\nats}_0)^d$.
If $\sum_{\ell=1}^d(i_{\ell}+j_{\ell}) = d(k-1)$, then either
$\max_{\ell}(i_{\ell}+j_{\ell}) \geq k$ or
$\tilde{i}+\tilde{j} = (k-1)\tilde{1}$.
\end{claim}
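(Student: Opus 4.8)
The plan is to argue by a direct averaging (pigeonhole) dichotomy on the coordinatewise sums. For each $\ell \in \{1,\ldots,d\}$ set $s_\ell := i_\ell + j_\ell$, so that by hypothesis $\sum_{\ell=1}^d s_\ell = d(k-1)$, and the claim becomes: either some $s_\ell \geq k$, or $s_\ell = k-1$ for every $\ell$.

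First I would dispose of the case $\max_\ell s_\ell \geq k$, in which the first alternative of the claim already holds and there is nothing to prove. So I would assume instead that $s_\ell \leq k-1$ for all $\ell \in \{1,\ldots,d\}$. Summing these $d$ inequalities gives $\sum_{\ell=1}^d s_\ell \leq d(k-1)$, but the hypothesis forces this to be an equality, namely $\sum_{\ell=1}^d s_\ell = d(k-1)$. A sum of $d$ terms, each bounded above by $k-1$, can equal $d(k-1)$ only if each term attains its upper bound; hence $s_\ell = k-1$ for every $\ell$, i.e. $i_\ell + j_\ell = k-1$ for all $\ell$, which is precisely $\tilde{i} + \tilde{j} = (k-1)\tilde{1}$. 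This establishes the second alternative and completes the proof.

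There is essentially no genuine obstacle here: the argument is a one-step observation that a sum of nonnegative integers each at most $k-1$ meeting the value $d(k-1)$ must be ``maximal in every coordinate.'' The only care needed is to note that we are working in $({\nats}\cup\{0\})^d$ so all $s_\ell$ are genuine nonnegative integers (no cancellation issues), and that the partial order ``$\leq$'' in the statement is the componentwise one, so the conclusion $\tilde{i}+\tilde{j} = (k-1)\tilde{1}$ is exactly coordinatewise equality. I would keep the write-up to these few lines.
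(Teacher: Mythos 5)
Your proof is correct and is the natural pigeonhole argument; the paper states Claim~\ref{clm:ij} without giving an explicit proof, and the averaging argument you supply (if each $s_\ell \le k-1$ then $\sum s_\ell \le d(k-1)$ with equality forcing $s_\ell = k-1$ for all $\ell$) is surely the intended one.
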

This follows from the pigeonhole principle.  For any $\tilde{i}\in s_p(N)$ we have by Claim~\ref{clm:ij} that 
\begin{equation}
\label{eqn:mod-max}  
  {\tilde{x}}^{(k-1)\tilde{1} - \tilde{i}}p 
  \equiv a_{\tilde{i}}{\tilde{x}}^{(k-1)\tilde{1}}\pmod{(x_1^k,\ldots,x_d^k)}.
\end{equation}
In particular, we have ${\tilde{x}}^{(k-1)\tilde{1} - \tilde{i}} \not\in I$
for any $\tilde{i}\in s_p(N)$. Multiplying 
(\ref{eqn:mod-max}) through by $x_i$, we further see that
$x_i{\tilde{x}}^{(k-1)\tilde{1} - \tilde{i}}p \in (x_1^k,\ldots,x_d^k)$, and
so $x_i{\tilde{x}}^{(k-1)\tilde{1} - \tilde{i}} \in I$ for each
$i\in\{1,\ldots,d\}$. Thus, we have
\begin{claim}
\label{clm:1-elt-socle}  
For each $\tilde{i}\in s_p(N)$ we have
${\tilde{x}}^{(k-1)\tilde{1} - \tilde{i}}\in(I:\m)\setminus I = \soc(I)\setminus I$.
\end{claim}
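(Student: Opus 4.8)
The plan is to verify Claim~\ref{clm:1-elt-socle} by directly checking the two defining conditions for membership in $\soc(I)\setminus I$, namely that ${\tilde{x}}^{(k-1)\tilde{1}-\tilde{i}} \notin I$ and that $x_j{\tilde{x}}^{(k-1)\tilde{1}-\tilde{i}} \in I$ for every $j \in \{1,\ldots,d\}$. Both of these have essentially already been assembled in the text just preceding the claim, so the proof is a matter of organizing that reasoning. The key input is the congruence~(\ref{eqn:mod-max}), which itself rests on Claim~\ref{clm:ij}: writing $\tilde{j} := (k-1)\tilde{1} - \tilde{i}$ (which lies in $({\nats}\cup\{0\})^d$ since $\tilde{i} \leq (k-1)\tilde{1}$), we have $\sum_\ell (i_\ell + j_\ell) = d(k-1)$, so Claim~\ref{clm:ij} tells us that for each monomial ${\tilde{x}}^{\tilde{i}'}$ appearing in $p$ other than ${\tilde{x}}^{\tilde{i}}$ itself, the product ${\tilde{x}}^{\tilde{j}}{\tilde{x}}^{\tilde{i}'}$ either has some exponent $\geq k$ (hence lies in $(x_1^k,\ldots,x_d^k)$) or equals ${\tilde{x}}^{(k-1)\tilde{1}}$; but the latter forces $\tilde{i}' = \tilde{i}$, contradicting $\tilde{i}' \neq \tilde{i}$. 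Hence modulo $(x_1^k,\ldots,x_d^k)$ only the $\tilde{i}$-term survives, giving~(\ref{eqn:mod-max}).

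From~(\ref{eqn:mod-max}) the first condition is immediate: since $a_{\tilde{i}} \neq 0$, the element ${\tilde{x}}^{\tilde{j}}p$ is congruent to the nonzero scalar multiple $a_{\tilde{i}}{\tilde{x}}^{(k-1)\tilde{1}}$ of a monomial not in $(x_1^k,\ldots,x_d^k)$, so ${\tilde{x}}^{\tilde{j}}p \notin (x_1^k,\ldots,x_d^k)$, which by definition of $I = ((x_1^k,\ldots,x_d^k):p)$ means ${\tilde{x}}^{\tilde{j}} \notin I$. For the second condition, fix $j \in \{1,\ldots,d\}$ and multiply~(\ref{eqn:mod-max}) through by $x_j$: the right-hand side becomes $a_{\tilde{i}} x_j {\tilde{x}}^{(k-1)\tilde{1}}$, which has $j$-th exponent equal to $k$ and therefore lies in $(x_1^k,\ldots,x_d^k)$. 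Hence $x_j {\tilde{x}}^{\tilde{j}} p \in (x_1^k,\ldots,x_d^k)$, so $x_j {\tilde{x}}^{\tilde{j}} \in ((x_1^k,\ldots,x_d^k):p) = I$. Combining the two conditions, ${\tilde{x}}^{\tilde{j}} = {\tilde{x}}^{(k-1)\tilde{1}-\tilde{i}} \in (I:\m) \setminus I = \soc(I)\setminus I$.

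I do not anticipate a real obstacle here: the statement is a short bookkeeping corollary of Claim~\ref{clm:ij} and the definition of the colon ideal, and the paper has already displayed the relevant congruence. The only point requiring a modicum of care is making sure $\tilde{j} = (k-1)\tilde{1} - \tilde{i}$ is genuinely a vector of nonnegative integers, which is guaranteed by the reduction $\tilde{i} \leq (k-1)\tilde{1}$ noted just before Claim~\ref{clm:ij}, and confirming that the ``exceptional'' alternative $\tilde{i}+\tilde{j}=(k-1)\tilde{1}$ in Claim~\ref{clm:ij} is exactly the diagonal term we want to isolate rather than something that could spoil the argument. Everything else is a direct unwinding of definitions.
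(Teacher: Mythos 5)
Your proposal is correct and follows exactly the same route as the paper: establish the congruence~(\ref{eqn:mod-max}) via Claim~\ref{clm:ij}, read off ${\tilde{x}}^{(k-1)\tilde{1}-\tilde{i}} \notin I$ from $a_{\tilde{i}} \neq 0$, and multiply by each $x_j$ to land in $(x_1^k,\ldots,x_d^k)$, hence in $I = ((x_1^k,\ldots,x_d^k):p)$. The only addition over the paper is that you spell out why Claim~\ref{clm:ij} forces every off-diagonal cross-term of ${\tilde{x}}^{(k-1)\tilde{1}-\tilde{i}}p$ into the pure-power ideal, a step the paper compresses into ``by Claim~\ref{clm:ij}.''
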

In what follows, $p$ is our given homogeneous polynomial of degree $N$.
We now briefly describe the explicit form of the unique element in $\soc(I)\setminus I$
stated in Observation~\ref{obs:soc-monomial} below and thereby re-establish the 
Gorensteinness of $I$.

Suppose $f\in\soc(I)\setminus I$, so $f\not\in I$ and $x_if\in I$ for each $x_i$,
that is $fp\not\in (x_1^k,\ldots,x_d^k)$ and $x_ifp\in (x_1^k,\ldots,x_d^k)$.
This implies that
$fp \equiv {\tilde{x}}^{(k-1)\tilde{1}}\pmod{(x_1^k,\ldots,x_d^k)}$, where
$f$ has been scaled appropriately.
Clearly we can write $f = \sum_{i=0}^{d(k-1)-N}f_i$
where each $f_i$ is homogeneous of degree $i$,
we have $f = f_{d(k-1)-N} + f'$ and so $f'p\in (x_1^k,\ldots,x_d^k)$ or
$f'\in I$, and
$f_{d(k-1)-N}p \equiv {\tilde{x}}^{(k-1)\tilde{1}}\pmod{(x_1^k,\ldots,x_d^k)}$.
We can therefore drop the subscript and further assume $f$ is homogeneous of
degree $d(k-1)-N$ and so $fp$ is homogeneous of degree $d(k-1)$.
By Claim~\ref{clm:ij} we can further assume $f$ to be of the form
$f = \sum_{\tilde{i}\in s_p(N)}b_{\tilde{i}}{\tilde{x}}^{(k-1)\tilde{1} - \tilde{i}}$
modulo the ideal $I$. Again by Claim~\ref{clm:ij} we get
$fp \equiv
\sum_{\tilde{i}\in s_p(N)}a_{\tilde{i}}b_{\tilde{i}}{\tilde{x}}^{(k-1)\tilde{1}}
\pmod{(x_1^k,\ldots,x_d^k)}$.
We then have that $f$ must have the mentioned form
$f = \sum_{\tilde{i}\in s_p(N)}b_{\tilde{i}}{\tilde{x}}^{(k-1)\tilde{1} - \tilde{i}}$
where $\sum_{\tilde{i}\in s_p(N)}a_{\tilde{i}}b_{\tilde{i}} = 1$.
By (\ref{eqn:mod-max}), we also have that
for distinct $\tilde{i},\tilde{j} \in s_p(N)$,

\begin{equation}
    \label{eqn:socle-monomial}
(a_{\tilde{i}}{\tilde{x}}^{(k-1)\tilde{1} - \tilde{j}}
- a_{\tilde{j}}{\tilde{x}}^{(k-1)\tilde{1} - \tilde{i}})p\in (x_1^k,\ldots,x_d^k),
\end{equation}

and so $a_{\tilde{i}}{\tilde{x}}^{(k-1)\tilde{1} - \tilde{j}}
- a_{\tilde{j}}{\tilde{x}}^{(k-1)\tilde{1} - \tilde{i}} \in I$.
By this, together with Claim~\ref{clm:1-elt-socle}, we then have 
$(I:\m) = \soc(I) = \{{\tilde{x}}^{(k-1)\tilde{1} - \tilde{i}}\}$
modulo $I$ for each $\tilde{i}\in s_p(N)$.
This re-establishes the known
fact that $I$ is Gorenstein, but more importantly for our discussion it
yields an explicit $K$-algebra property of $R/I$.
\begin{observation}
\label{obs:soc-monomial}
For a homogeneous, zero-dimensional Gorenstein ideal 
$I = ((x_1^k,\ldots,x_d^k):p)$, where $p$ and $s_p(N)$ are as before,  
the socle of $I$ is always generated by a single monomial of $R$. 
Indeed, $\soc(I) = \{{\tilde{x}}^{(k-1)\tilde{1} - \tilde{i}}\}$ 
modulo $I$ for each $\tilde{i}\in s_p(N)$.
\end{observation}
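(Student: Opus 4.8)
The plan is to extract the statement of Observation~\ref{obs:soc-monomial} directly from the computations already laid out immediately above it, so the ``proof'' is essentially a matter of assembling the pieces in the right order. First I would note that the two crucial facts have already been established: Claim~\ref{clm:1-elt-socle} shows that for each $\tilde{i}\in s_p(N)$ the monomial ${\tilde{x}}^{(k-1)\tilde{1}-\tilde{i}}$ lies in $\soc(I)\setminus I$, which gives that $\soc(I)\neq I$ and that the displayed monomials are genuine socle elements modulo $I$. Then the congruence (\ref{eqn:socle-monomial}), together with (\ref{eqn:mod-max}), shows that any two of these monomials are scalar multiples of each other modulo $I$: precisely, $a_{\tilde{i}}{\tilde{x}}^{(k-1)\tilde{1}-\tilde{j}} \equiv a_{\tilde{j}}{\tilde{x}}^{(k-1)\tilde{1}-\tilde{i}}\pmod{I}$, and since all $a_{\tilde{i}}\neq 0$ (by definition of $s_p(N)$) this means $\soc(I)/I$ is at most one-dimensional over $K$, spanned by any single ${\tilde{x}}^{(k-1)\tilde{1}-\tilde{i}}$.

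Second, I would address the one genuine gap: the analysis of an arbitrary $f\in\soc(I)\setminus I$ carried out before the Observation shows that every socle element, after reduction modulo $I$ and after extracting the top-degree homogeneous piece, has the form $\sum_{\tilde{i}\in s_p(N)}b_{\tilde{i}}{\tilde{x}}^{(k-1)\tilde{1}-\tilde{i}}$ with $\sum a_{\tilde{i}}b_{\tilde{i}}=1$. Combined with the previous paragraph's observation that all the ${\tilde{x}}^{(k-1)\tilde{1}-\tilde{i}}$ coincide up to scalar modulo $I$, this collapses to: every element of $\soc(I)$ is, modulo $I$, a scalar multiple of the single monomial ${\tilde{x}}^{(k-1)\tilde{1}-\tilde{i_0}}$ for any fixed $\tilde{i_0}\in s_p(N)$. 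Hence $\soc(I)/I$ is exactly one-dimensional and is generated by that monomial, which is precisely the claim ``$\soc(I) = \{{\tilde{x}}^{(k-1)\tilde{1}-\tilde{i}}\}$ modulo $I$''. I would also remark that $\soc(I)\supseteq I$ always (since $(I:\m)\supseteq I$), so $\soc(I)$ as an ideal is generated by $I$ together with this one monomial, matching the phrasing of the Observation.

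The main obstacle — such as it is — is making sure the reduction ``assume $f$ homogeneous of degree $d(k-1)-N$ modulo $I$'' is airtight: one must check that $I$ is itself a homogeneous ideal (true, since $(x_1^k,\dots,x_d^k)$ is homogeneous and $p$ is homogeneous, so the colon ideal $((x_1^k,\dots,x_d^k):p)$ is homogeneous), so that passing to the top-degree component of $f$ is legitimate and stays inside $\soc(I)$, and that the lower-degree components $f'$ genuinely land in $I$ rather than merely in $(x_1^k,\dots,x_d^k)$ — but $(x_1^k,\dots,x_d^k)\subseteq I$, so this is immediate. Everything else is bookkeeping with Claim~\ref{clm:ij} and the two displayed congruences. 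So the proof I would write is short: cite Claim~\ref{clm:1-elt-socle} for nonemptiness, cite (\ref{eqn:socle-monomial}) and (\ref{eqn:mod-max}) for the one-dimensionality, and cite the preceding paragraph's normal-form computation to conclude that the socle modulo $I$ is spanned by a single monomial, hence $I$ is Gorenstein with the stated dual socle monomial.
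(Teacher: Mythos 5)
Your proposal is correct and follows essentially the same route as the paper: cite Claim~\ref{clm:1-elt-socle} for nonemptiness, use the normal-form reduction of an arbitrary $f\in\soc(I)\setminus I$ together with Claim~\ref{clm:ij}, and invoke~(\ref{eqn:mod-max}) and~(\ref{eqn:socle-monomial}) to collapse the monomials ${\tilde{x}}^{(k-1)\tilde{1}-\tilde{i}}$ to a single generator modulo $I$. Your added observation that one should verify $I=((x_1^k,\ldots,x_d^k):p)$ is homogeneous (to justify passing to the top-degree component) is a sensible clarification of a step the paper treats as given.
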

Recall that a {\em term order} on the monoid $[x_1,\ldots,x_d]$ is a linear,
or total, order $\preceq$ of the monomials that respects the multiplication,
so $u\preceq v \Rightarrow uw\preceq vw$ for any $u,v,w\in [x_1,\ldots,x_d]$.
We will here use the {\em lexicographical term order (LEX)} on the monomials in
$[x_1,\ldots,x_d]$ that is determined by $x_1 \prec x_2 \prec \cdots \prec x_d$.
So one first compares the exponents of $x_d$ in the monomials, and then, in case of 
equality, one compares the exponents of $x_{d-1}$ and so on\footnote{the
  {\em degree lexicographical term order (DEGLEX)} would work exactly
  the same way for us since $p$ is homogeneous.}.

\begin{convention}
The natural partial order on $[x_1,\ldots,x_d]$ will
be denoted by ``$\leq$'', as we have done so far, whereas ``$\preceq$'' will
mean a total term ordering on $[x_1,\ldots,x_d]$ (for the most part the
aforementioned LEX ordering).
\end{convention}

Since $I$ is generated by homogeneous polynomials from $R$, every
Gr\"{o}bner basis for $I$ in relation to any term order, in particular LEX,
consists of homogeneous polynomials. This can be seen by explicitly
carrying though Buchberger's Algorithm, which generalizes both the Euclidean
Algorithm and Gaussian Elimination in linear algebra into the context of
multivariate polynomials~\cite[Chapter 15]{Eisenbud}. Since $R/I$ is a zero
dimensional ring, and therefore finite dimensional as a vector space over $K$. Then each
Gr\"{o}bner basis for $I$ contains $d$ polynomials, the leading terms
of which are pure powers of $x_1,\ldots,x_d$ with each power at most $k$.

Let $\tilde{\mu}\in s_p(N)$ be such that ${\tilde{x}}^{\tilde{\mu}}$ is
the largest monomial appearing as a summand in our given polynomial 
$p$ with respect to the LEX term ordering. Since $p$ is homogeneous,
${\tilde{x}}^{(k-1)\tilde{1} - \tilde{\mu}}$
is the least monomial representing the generator of $\soc(I)/I$ as stated 
in Observation~\ref{obs:soc-monomial}. 
\begin{proposition}
\label{prp:least-irreducible}
With the above conventions and notations,
${\tilde{x}}^{(k-1)\tilde{1} - \tilde{\mu}}$ is an irreducible monomial 
with respect to LEX; that is, it cannot be reduced further using the 
Gr\"{o}bner basis obtained from LEX.
\end{proposition}
\begin{proof}
Suppose ${\tilde{x}}^{(k-1)\tilde{1} - \tilde{\mu}} - f \in I$ for some
polynomial $f$, where each monomial term in $f$ is strictly less than 
${\tilde{x}}^{(k-1)\tilde{1} - \tilde{\mu}}$ with respect to LEX. Since our 
Gr\"{o}bner basis with respect to LEX consists of homogeneous polynomials, then
$f$ is homogeneous of the same degree $\Delta:= d(k-1) - N$ as 
${\tilde{x}}^{(k-1)\tilde{1} - \tilde{\mu}}$. Further, we can assume
each monomial term appearing in $f$ not to be in $(x_1^k,\ldots,x_d^k)$ 
and hence of the form ${\tilde{x}}^{(k-1)\tilde{1} - \tilde{j}}$ for 
some $\tilde{j}\in\Sigma(N)$. By definition of $\tilde{\mu}$ we have
$\tilde{x}^{\tilde{i}}\preceq \tilde{x}^{\tilde{\mu}}$ for each 
$\tilde{i}\in s_p(N)$ and by our assumption on $f$ we have
${\tilde{x}}^{(k-1)\tilde{1} - \tilde{j}}\prec {\tilde{x}}^{(k-1)\tilde{1} - \tilde{\mu}}$
for every monomial ${\tilde{x}}^{(k-1)\tilde{1} - \tilde{j}}$ in $f$,
and therefore $\tilde{x}^{\tilde{i}}\preceq \tilde{x}^{\tilde{\mu}}\prec {\tilde{x}}^{\tilde{j}}$.
In particular, the monomials ${\tilde{x}}^{\tilde{i}}$ and ${\tilde{x}}^{\tilde{j}}$ are
distinct. By Claim~\ref{clm:ij} we then have $pf\in I$ and hence 
${\tilde{x}}^{(k-1)\tilde{1}} =  p{\tilde{x}}^{(k-1)\tilde{1} - \tilde{\mu}} \in I$,
a blatant contradiction.
\end{proof}
By the above Proposition~\ref{prp:least-irreducible}, every monomial in 
\begin{equation}
\label{eqn:B(I)}
B(I) :=
\{ {\tilde{x}}^{\tilde{i}} : \tilde{0}\leq\tilde{i}\leq (k-1)\tilde{1} - \tilde{\mu}\}
\end{equation}
is fully reduced with respect to $GB(I)$, and hence $B(I)$ 
forms part of a basis for $R$ modulo $I$ as a vector space over $K$. 

We now discuss a way of presenting our ideal $I = ((x_1^k,\ldots,x_d^k):p)$
as an inverse ideal of particular differential operators, provided that certain criteria 
are met. Since $p$ is 
homogeneous of degree $N$, then $R/I = \bigoplus_{i=0}^{\Delta}(R/I)_i$ as a graded 
$K$-algebra, where $\Delta = d(k-1) - N$. Consider now a nonzero homogeneous 
element $f \in (R/I)_i$ of degree $i\in \{0,\ldots, \Delta\}$. By definition of $I$
as an ideal quotient, we have $pf\equiv g \pmod{(x_1^k,\ldots,x_d^k)}$ where
$g\in \spn_K(\{{\tilde{x}}^{\tilde{i}} :  \tilde{0}\leq\tilde{i}\leq (k-1)\tilde{1}\})$ 
is a nonzero homogeneous polynomial of degree $N+i\leq d(k-1)$. 
Assuming $g$ is scaled appropriately, there exists a monomial ${\tilde{x}}^{\tilde{r}}$ of 
degree $d(k-1)-N-i$ (not necessarily unique!) such that 
${\tilde{x}}^{\tilde{r}}g \equiv {\tilde{x}}^{(k-1)\tilde{1}} \pmod{(x_1^k,\ldots,x_d^k)}$,
and hence 
$({\tilde{x}}^{\tilde{r}}f)p \equiv {\tilde{x}}^{(k-1)\tilde{1}} \pmod{(x_1^k,\ldots,x_d^k)}$.
Therefore ${\tilde{x}}^{\tilde{r}}f\not\in I$, yet it is homogeneous of degree 
$\Delta = d(k-1) - N$, the maximum degree of an element in $R/I$ and hence is contained
in the one-dimensional $K$-vector space $(R/I)_{\Delta}$ generated by the sole element (up to $K$-scaling) of
the socle of $I$ that is not in $I$ (as in Observation~\ref{obs:soc-monomial}).
As neither ${\tilde{x}}^{\tilde{r}}$ nor $f$ is contained in $I$, we have that 
the $K$-linear function 
$(R/I)_i \rightarrow (R/I)_{\Delta} = \soc(I)/I \cong K$,
$\overline{u} \mapsto \overline{fu}$ is nonzero.
Since $f$ was an arbitrary nonzero element of $(R/I)_i$, we see
that the bilinear form
$(R/I)_i \times (R/I)_{{\Delta}-i} \rightarrow (R/I)_{\Delta} = \soc(I)/I \cong K$,
$(u,v)\mapsto uv$  is nondegenerate. Using the same terminology as
in Observation~\ref{obs:soc-monomial}, we summarize in the following observation.
We believe that this observation is well-known folklore, but since we have not 
seen it stated in the following specific form, we included it so it is directly
compatible with Lemma 13.4.7 in~\cite{CoxLittleSchenck}. 
\begin{observation}
\label{obs:bi-linear}
For our homogeneous, zero-dimensional Gorenstein ideal 
$I = ((x_1^k,\ldots,x_d^k):p)$ of $R = K[x_1,\ldots,x_d]$,
the graded $K$-algebra $R/I = \bigoplus_{i=0}^{\Delta} (R/I)_i$
satisfies the following conditions:
(i) $(R/I)_0 \cong (R/I)_{\Delta} \cong K$,
(ii) $R/I$ is generated by $\overline{x}_1,\ldots,\overline{x}_d \in (R/I)_1$
as a $K$-algebra, and
(iii) the bilinear form
$(R/I)_i \times (R/I)_{{\Delta}-i} \rightarrow (R/I)_{\Delta} = \soc(I)/I \cong K$,
$(u,v)\mapsto uv$  is nondegenerate.
\end{observation}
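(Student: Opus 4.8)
The plan is to handle the three parts in order, building on the computation already carried out in the paragraph above. For part~(i), $(R/I)_0\cong K$ is immediate since $I$ is a proper homogeneous ideal. For $(R/I)_M\cong K$ I would first check that $M=d(k-1)-N$ is genuinely the top nonzero degree of $R/I$: if $h\in R$ is homogeneous of degree $>M$, then $hp$ is homogeneous of degree $>d(k-1)$, yet every polynomial not lying in $(x_1^k,\ldots,x_d^k)$ reduces to an element of $\spn_K\{\tilde{x}^{\tilde{i}}:\tilde{0}\leq\tilde{i}\leq(k-1)\tilde{1}\}$, all of whose monomials have degree $\leq d(k-1)$; hence $hp\in(x_1^k,\ldots,x_d^k)$, i.e.\ $h\in I$, so $(R/I)_j=0$ for $j>M$. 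Consequently every homogeneous degree-$M$ element of $R/I$ is killed by each $x_i$, whence $(R/I)_M\subseteq\soc(I)/I$; and by Observation~\ref{obs:soc-monomial} the space $\soc(I)/I$ is one-dimensional, spanned by a monomial of degree $M$, so $\soc(I)/I\subseteq(R/I)_M$. The two inclusions yield $(R/I)_M=\soc(I)/I\cong K$. Part~(ii) requires nothing new: $R=K[x_1,\ldots,x_d]$ is generated as a $K$-algebra by $x_1,\ldots,x_d$, hence so is the quotient $R/I$, and each $\overline{x}_i$ lies in $(R/I)_1$.

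The real substance is in part~(iii). Fixing $i\in\{0,\ldots,M\}$ and a nonzero homogeneous $f\in(R/I)_i$, the goal is to produce $v\in(R/I)_{M-i}$ with $\overline{fv}\neq 0$ in $(R/I)_M$. I would lift $f$ to $R$ and write $fp\equiv g\pmod{(x_1^k,\ldots,x_d^k)}$, where $g$ is homogeneous of degree $N+i\leq d(k-1)$ and $g\neq 0$ (otherwise $f\in I$). Next I would pick a monomial $\tilde{x}^{\tilde{s}}$ occurring in $g$ with $\tilde{s}\leq(k-1)\tilde{1}$ and set $v:=\overline{\tilde{x}^{(k-1)\tilde{1}-\tilde{s}}}$, which is homogeneous of degree $M-i$. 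The crux is to show $\tilde{x}^{(k-1)\tilde{1}-\tilde{s}}g\equiv c\,\tilde{x}^{(k-1)\tilde{1}}\pmod{(x_1^k,\ldots,x_d^k)}$ for some nonzero $c\in K$: for any other monomial $\tilde{x}^{\tilde{t}}$ of $g$ with $\tilde{t}\leq(k-1)\tilde{1}$ and $\tilde{t}\neq\tilde{s}$, the exponent vector $((k-1)\tilde{1}-\tilde{s})+\tilde{t}$ has coordinate sum $d(k-1)$, so Claim~\ref{clm:ij} forces either some coordinate to be $\geq k$ (which kills that term modulo $(x_1^k,\ldots,x_d^k)$) or the vector to equal $(k-1)\tilde{1}$, i.e.\ $\tilde{t}=\tilde{s}$, which is excluded. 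Hence $(\tilde{x}^{(k-1)\tilde{1}-\tilde{s}}f)p\not\equiv 0\pmod{(x_1^k,\ldots,x_d^k)}$, so $\tilde{x}^{(k-1)\tilde{1}-\tilde{s}}f\notin I$; being homogeneous of degree $M$, its class $\overline{fv}$ is a nonzero element of $(R/I)_M$. This shows the left kernel of the pairing $(R/I)_i\times(R/I)_{M-i}\to(R/I)_M$ is trivial, and running the identical argument with $i$ and $M-i$ interchanged shows the right kernel is trivial too, so the form is nondegenerate.

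The main obstacle will be precisely this bookkeeping in part~(iii): one must choose the complementary monomial $\tilde{x}^{(k-1)\tilde{1}-\tilde{s}}$ so that, after multiplication, exactly one summand of $g$ survives modulo $(x_1^k,\ldots,x_d^k)$, and it is Claim~\ref{clm:ij}---together with the normalization $\tilde{i}\leq(k-1)\tilde{1}$ on the monomials of $p$, and hence of $g$---that makes this happen automatically. I would also note that $\tilde{x}^{(k-1)\tilde{1}-\tilde{s}}$ is not unique (any monomial of $g$ lying below $(k-1)\tilde{1}$ gives an admissible $v$), but a single valid choice is all that is needed. The remaining ingredients---the top-degree estimate in~(i), the bilinearity, and the left/right symmetry in~(iii)---are routine.
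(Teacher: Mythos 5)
Your proof is correct and takes essentially the same route as the paper's: for a nonzero homogeneous $f\in(R/I)_i$ you form $fp\equiv g\pmod{(x_1^k,\ldots,x_d^k)}$, pick a complementary monomial $\tilde{x}^{(k-1)\tilde{1}-\tilde{s}}$ so that exactly one term of $g$ survives modulo $(x_1^k,\ldots,x_d^k)$ by Claim~\ref{clm:ij}, and conclude $\tilde{x}^{(k-1)\tilde{1}-\tilde{s}}f\notin I$ lands in the one-dimensional $(R/I)_M=\soc(I)/I$. You in fact supply the justification (via Claim~\ref{clm:ij}) that the paper merely asserts for the existence of the complementary monomial, and you spell out parts~(i) and~(ii) which the paper treats as immediate.
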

By Observation~\ref{obs:bi-linear} and \cite[Lemma~13.4.7, p.~659]{CoxLittleSchenck}, we then have
for the ring $R/I$ the following corollary.
\begin{corollary}
\label{cor:IS-CLS}  
Let $I$ and $R$ be as in Observation~\ref{obs:bi-linear}.
Let $t_1,\ldots,t_d$ be indeterminates and
\[
P(t_1,\ldots,t_d) = (t_1\overline{x}_1 + \cdots + t_d\overline{x}_d)^{\Delta}
\in (R/I)_{\Delta}[t_1,\ldots,t_d] \cong K[t_1,\ldots,t_d]
\]
which is a polynomial over $K$ via the $K$-vector space isomorphism
${\tilde{x}}^{(k-1)\tilde{1} - \tilde{\mu}}\mapsto 1\in K$. In this case the ideal
$I$ has the form
\[
I = \left\{f\in R = K[x_1,\ldots,x_d] :
f\left(\frac{\partial}{\partial t_1},
\cdots,\frac{\partial}{\partial t_d}\right)P(t_1,\ldots,t_d) = 0\right\}.
\]
\end{corollary}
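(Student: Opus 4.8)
The plan is to deduce the statement directly from \cite[Lemma~13.4.7, p.~659]{CoxLittleSchenck} after verifying that $A := R/I$ satisfies its hypotheses. First I would observe that Observation~\ref{obs:bi-linear} supplies exactly the input that lemma requires: $A = \bigoplus_{i=0}^M A_i$ is a standard graded $K$-algebra generated by $\overline{x}_1,\ldots,\overline{x}_d$ in degree $1$, it has $A_0 \cong A_M \cong K$, and the multiplication pairing $A_i \times A_{M-i}\to A_M$ is nondegenerate for every $i$. Under these conditions the cited lemma identifies $I$ as the annihilator, under the differential action of $R$ on $K[t_1,\ldots,t_d]$, of the single form obtained from a socle generator of $A$ by ``genericization'', and that form is exactly $P(t_1,\ldots,t_d)=(t_1\overline{x}_1+\cdots+t_d\overline{x}_d)^M$ once the identification $A_M[t_1,\ldots,t_d]\cong K[t_1,\ldots,t_d]$ is made explicit.

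Second I would pin down that identification. Expanding $P$ by the multinomial theorem, the coefficient of ${\tilde{t}}^{\tilde{a}}$ with $a_1+\cdots+a_d=M$ is $\frac{M!}{a_1!\cdots a_d!}\,{\tilde{x}}^{\tilde{a}}\in A_M$; since $A_M$ is one-dimensional with basis the socle monomial ${\tilde{x}}^{(k-1)\tilde{1}-\tilde{\mu}}$ (Observation~\ref{obs:soc-monomial} together with Proposition~\ref{prp:least-irreducible}), the stated $K$-linear isomorphism ${\tilde{x}}^{(k-1)\tilde{1}-\tilde{\mu}}\mapsto 1$ converts each such coefficient into a scalar and hence $P$ into an honest, nonzero element of $K[t_1,\ldots,t_d]$; here I use $\operatorname{char}K=0$, so that none of the coefficients $\frac{M!}{a_1!\cdots a_d!}$ vanish. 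With $A$ and $P$ so identified, \cite[Lemma~13.4.7]{CoxLittleSchenck} gives $I=\{f\in R: f(\partial/\partial t_1,\ldots,\partial/\partial t_d)P=0\}$, which is the assertion.

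If one prefers a self-contained argument in place of the black-box citation, the one step worth writing out is the graded computation behind the lemma, and this is also where the only genuine (if minor) care is needed. Since $I$ is homogeneous it suffices to test homogeneous $f\in R_i$; then $f(\partial)P$ is homogeneous of degree $M-i$ in the $t$'s, and it vanishes iff $g(\partial)\bigl(f(\partial)P\bigr)\big|_{t=0}=0$ for all $g\in R_{M-i}$, i.e.\ iff $(gf)(\partial)P\big|_{t=0}=0$ for all such $g$. A short multinomial evaluation identifies $(gf)(\partial)P\big|_{t=0}$, up to a nonzero constant depending only on $M$, with the image of $\overline{fg}\in A_M$ under $A_M\cong K$; hence $f$ annihilates $P$ exactly when $\overline{fg}=0$ for every $g\in R_{M-i}$, which by nondegeneracy (Observation~\ref{obs:bi-linear}(iii)) is exactly the condition $f\in I$. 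The bookkeeping I would watch most closely is the interaction of the two normalizations in play --- the implicit scaling of $p$ from Claim~\ref{clm:1-elt-socle} and Observation~\ref{obs:soc-monomial}, and the choice ${\tilde{x}}^{(k-1)\tilde{1}-\tilde{\mu}}\mapsto 1$ fixing $A_M\cong K$ --- each of which alters $P$ only by an overall nonzero scalar and so leaves its annihilator unchanged; checking that this is so is essentially the whole of the work beyond invoking the lemma.
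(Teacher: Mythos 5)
Your proposal takes the same route as the paper, which likewise presents this corollary as an immediate consequence of Observation~\ref{obs:bi-linear} together with \cite[Lemma~13.4.7]{CoxLittleSchenck} and offers no further proof. Your added remarks on the identification $(R/I)_M\cong K$, the characteristic-zero assumption, and the harmless rescalings are sound elaborations of the same argument, and the optional self-contained verification you sketch is consistent with what the cited lemma delivers.
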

Note that in the above corollary, our ideal $I$ is equivalent to the ideal 
$(0:_S M)$ given in~\cite[Thm 21.6]{Eisenbud}, where $M$ is the $S$-module
generated by the polynomial $P$ (with the IS defined as in~\cite[p.~526]{Eisenbud}).
In Corollary~\ref{cor:IS-CLS} however, the form of the polynomial $P = P(t_1,\ldots,t_d)$ 
is provided. This is exactly what will be useful for us.

As stated in the above Corollary~\ref{cor:IS-CLS}, then {\em any}
homogeneous, zero-dimensional Gorenstein ideal of $R = K[x_1,\ldots,x_d]$, 
necessarily of the form $I = ((x_1^k,\ldots,x_d^k):p)$, 
can indeed in this way be viewed as the inverse ideal of
the singleton set $M' = \{P\}$ containing the polynomial 
$P = P[t_1,\ldots,t_d]\in K[t_1,\ldots,t_d]$
as stated in Observation~\ref{obs:I=J}.

\begin{example}
\label{exa:motivating-2}
Continuing with Example~\ref{exa:motivating}, consider the ideal $I = (x^3, y^2 - xy)$
of $R = K[x,y]$ that can be written as $I = ((x^4,y^4) : p)$ where 
$p = xy^2 + x^2y + x^3$ is of homogeneous degree $3$ and is
therefore homogeneous, zero-dimensional and Gorenstein.
Here, the generators 
$y^2 - xy$ and $x^3$ form a Gr\"{o}bner basis for $I$ with respect to LEX,
and the unique monomial generator for the socle of $I$ modulo $I$ is
given by ${\tilde{x}}^{(k-1)\tilde{1} - \tilde{\mu}} = x^2y$ and so $\Delta = 3$. 
In this case $B(I)$ from (\ref{eqn:B(I)}) forms the entire basis for $R/I$ 
as a $K$-vector space. As the quotient ideal $I = ((x^4,y^4) : p)$ of $R$, 
it satisfies the three conditions in Observation~\ref{obs:bi-linear}. 
By reducing in terms of the Gr\"{o}bner basis $GB(I) = \{y^2 - xy,x^3\}$, we obtain
$(t_1\overline{x} + t_2\overline{y})^3
= (3t_1^2t_2 + 3t_1t_2^2 + t_2^3)\overline{x}^2\overline{y}$ and 
hence 
\[
P = P(t_1,t_2) = 3t_1^2t_2 + 3t_1t_2^2 + t_2^3 \in K[t_1,t_2]
\]
in this case. It is here easy to check directly that those $f\in K[x,y]$ such that 
$f(\frac{\partial}{\partial t_1},\frac{\partial}{\partial t_2})P = 0$
form the ideal $I = (x^3,y^2-xy)$, consistent with
Corollary~\ref{cor:IS-CLS}. Note that $P$ is not a monomial in $t_1,\ldots,t_d$,
although it is a homogeneous polynomial of degree $3$ containing the
same number of terms as $p$ does, and the first monomial in $P$
corresponds to the monomial that generates the socle $\soc(I)/I$ of $R/I$. 
\end{example}

\begin{example}
Consider now a corresponding zero-dimensional Gorenstein 
monomial ideal $I = (x^3, y^2)$ of $R = K[x,y]$. Here $I$ can be written
in the form of $I = ((x^3,y^3) : y)$, and so also in this case the
ideal $I$ satisfies the three conditions in  
Observation~\ref{obs:bi-linear} (which can also be seen directly in this case). 
Also in this case, $B(I)$ from (\ref{eqn:B(I)}) forms the entire 
basis for $R/I$ as a $K$-vector space. 
Here we readily obtain the unique monomial generator for 
$\soc(I)$ modulo $I$ as ${\tilde{x}}^{(k-1)\tilde{1} - \tilde{\mu}} = x^2y$,
and so $\Delta = 3$. Also we get 
$(t_1\overline{x} + t_2\overline{y})^3 = 3t_1^2t_2\overline{x}^2\overline{y}$,
and hence
\[
P = P(t_1,t_2) = 3t_1^2t_2\in K[t_1,t_2]
\]
in this case, which is a scalar multiple of the very monomial that
corresponds to the unique monomial in $\soc(I)\setminus I$. Hence,
the ideal $I$ in this example falls squarely in what is stated in 
Observation~\ref{obs:I=J}, since $I = (x^3, y^2)$ is here
the inverse ideal $I = {M'}^{-1} = \ann(M')$ of $M' = \{t_1^2t_2\}$,
a monomial set corresponding to the singleton set $M = \{x^2y\}$
and the docle $\doc(I) = M$.
As in previous example $x = \frac{\partial}{\partial t_1}$
and $y = \frac{\partial}{\partial t_2}$ acts as differential operators.
\end{example}

\begin{example}
Consider the ideal $I = ((x^{10},y^{10}):p)$ of $R = K[x,y]$
where $p = y^6 + x^3y^3 + x^5y$. In this case we have 
$I = (xy^6 - 2x^2y^5 - 2x^3y^4 - x^4y^3 + 2x^5y^2 + x^6y + 3x^7, y^7 - x^2y^5 + x^7)$
and $I$ has a Gr\"{o}bner basis with respect to LEX given as follows 
(courtesy of the software package Macaulay2~\cite{Macaulay2}):
\begin{eqnarray*}
GB(I) & = & \{ y^7 - x^2y^5 + x^5, \\
      & = &  xy^6 - 2x^2y^5 - 2x^3y^4 - x^4y^3 + 2x^5y^2 + x^6y + 3x^7, \\
      & = &  x^3y^5 +x^4y^4 - x^6y^2 - x^7y - x^8, \\
      & = &  x^5y^4 - x^8 y, \\
      & = &  x^{10}\} .
\end{eqnarray*}
From $GB(I)$, we see that $R/I$ is of dimension $49$ over $K$. 
Here ${\tilde{x}}^{(k-1)\tilde{1} - \tilde{\mu}} = x^9y^3$, 
and $B(I)$ from (\ref{eqn:B(I)}) contains $40$ elements, and
so it forms a proper subset of a $K$-basis for $R/I$ formed by
the irreducible monomials with respect to $GB(I)$. Here the unique
monomial generator for $\soc(I)$ modulo $I$ is 
${\tilde{x}}^{(k-1)\tilde{1} - \tilde{\mu}} = x^9y^3$, and so $\Delta = 12$.
In this case we obtain, using the reductions from the above Gr\"{o}bner basis
$GB(I)$ with respect to LEX implemented in the software package Macaulay2~\cite{Macaulay2}, 
that 
\[
(t_1\overline{x} + t_2\overline{y})^{12} 
= (220t_1^9t_2^3 + 924t_1^6t_2^6 + 495t_1^4t_2^8)\overline{x}^9\overline{y}^3,
\]
and so 
\[
P = P(t_1,t_2) = 220t_1^9t_2^3 + 924t_1^6t_2^6 + 495t_1^4t_2^8 \in K[t_1,t_2].
\]
Note that $P$ and $p$ contain the same numbers of terms. 
\end{example}

We gather some of our notes from the previous three examples as follows:
(i) In the first two examples $B(I)$ is the entire basis for $R/I$, 
but not in the third example. 
(ii) Only in the second example is $P = P(t_1,t_2)$ a monomial corresponding 
to the unique monomial generator of $\soc(I)/I$, which in that case is the docle $\doc(I)$
since $I$ is itself a monomial ideal.
(iii) In all examples $P(t_1,t_2)$ is a homogeneous polynomial of degree 
$\Delta = d(k-1) - N$ and is further a $K$-linear combination of 
the terms ${\tilde{x}}^{(k-1)\tilde{1} - \tilde{i}}$
where $\tilde{i}\in s_p(N)$ for each of the given cases of the homogeneous polynomial $p$. This
is not a coincidence. We conclude this section with a proposition (along with a corollary) where this  
observation is made precise. 
\begin{proposition}
\label{prp:P}
Let $I$, $p$, $R$ and $P(t_1,\ldots,t_d) \in K[t_1,\ldots,t_d]$ be 
as in Corollary~\ref{cor:IS-CLS}, so we have
$p = \sum_{\tilde{i}\in s_p(N)}a_{\tilde{i}}{\tilde{x}}^{\tilde{i}}$
where $a_{\tilde{i}} \neq 0$ for each $\tilde{i}\in s_p(N)$.
In this case, $P = P(t_1,\ldots,t_d)$ from Corollary~\ref{cor:IS-CLS} is
(up to a non-zero $K$-scalar) given by
\[
P = \sum_{\tilde{i}\in s_p(N)}
\binom{\Delta}{k-1-i_1 \:\cdots\: k-1-i_d}
a_{\tilde{i}}{\tilde{t}}^{(k-1)\tilde{1} - \tilde{i}} 
\in K[t_1,\ldots,t_d].
\]
\end{proposition}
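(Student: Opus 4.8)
The plan is to expand $P$ by the multinomial theorem and then identify the residue in $R/I$ of each monomial that appears, using that the top graded piece $(R/I)_M$ is one-dimensional. First I would write
\[
P(t_1,\ldots,t_d)=\bigl(t_1\overline{x}_1+\cdots+t_d\overline{x}_d\bigr)^M
=\sum_{|\tilde{r}|=M}\binom{M}{r_1\:\cdots\:r_d}\,{\tilde{t}}^{\tilde{r}}\,\overline{x}^{\tilde{r}},
\]
the sum being over all $\tilde{r}=(r_1,\ldots,r_d)\in{\nats}_0^d$ with $r_1+\cdots+r_d=M$, where $\overline{x}^{\tilde{r}}$ denotes the residue of ${\tilde{x}}^{\tilde{r}}$. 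Since the identification $(R/I)_M\cong K$ from Corollary~\ref{cor:IS-CLS} is $K$-linear (it sends the residue of ${\tilde{x}}^{(k-1)\tilde{1}-\tilde{\mu}}$ to $1$), the whole task reduces to computing, for each such $\tilde{r}$, the scalar $c_{\tilde{r}}\in K$ with $\overline{x}^{\tilde{r}}=c_{\tilde{r}}\,{\tilde{x}}^{(k-1)\tilde{1}-\tilde{\mu}}$ in $R/I$; the multinomial coefficients then simply carry through this reduction.

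The heart of the argument is a direct computation inside the ideal quotient. Fixing $\tilde{r}$ with $|\tilde{r}|=M$, I would consider ${\tilde{x}}^{\tilde{r}}p=\sum_{\tilde{i}\in s_p(N)}a_{\tilde{i}}{\tilde{x}}^{\tilde{r}+\tilde{i}}$, every summand of total degree $M+N=d(k-1)$. Applying Claim~\ref{clm:ij} to $\tilde{r}$ and $\tilde{i}$, the monomial ${\tilde{x}}^{\tilde{r}+\tilde{i}}$ lies in $(x_1^k,\ldots,x_d^k)$ unless $\tilde{r}+\tilde{i}=(k-1)\tilde{1}$, that is, unless $\tilde{i}=(k-1)\tilde{1}-\tilde{r}$. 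Hence, writing $\lambda_{\tilde{r}}:=a_{(k-1)\tilde{1}-\tilde{r}}$ if $(k-1)\tilde{1}-\tilde{r}\in s_p(N)$ and $\lambda_{\tilde{r}}:=0$ otherwise, we obtain ${\tilde{x}}^{\tilde{r}}p\equiv\lambda_{\tilde{r}}\,{\tilde{x}}^{(k-1)\tilde{1}}\pmod{(x_1^k,\ldots,x_d^k)}$. Comparing this with (\ref{eqn:mod-max}) for the index $\tilde{\mu}$, namely ${\tilde{x}}^{(k-1)\tilde{1}-\tilde{\mu}}p\equiv a_{\tilde{\mu}}\,{\tilde{x}}^{(k-1)\tilde{1}}\pmod{(x_1^k,\ldots,x_d^k)}$, shows that $\bigl({\tilde{x}}^{\tilde{r}}-\tfrac{\lambda_{\tilde{r}}}{a_{\tilde{\mu}}}{\tilde{x}}^{(k-1)\tilde{1}-\tilde{\mu}}\bigr)p\in(x_1^k,\ldots,x_d^k)$, i.e.\ this element lies in $I$. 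Therefore $c_{\tilde{r}}=\lambda_{\tilde{r}}/a_{\tilde{\mu}}$; in particular $\overline{x}^{\tilde{r}}=0$ in $R/I$ whenever $(k-1)\tilde{1}-\tilde{r}\notin s_p(N)$.

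Finally I would substitute these values back into the multinomial expansion. This annihilates every term except those with $\tilde{r}=(k-1)\tilde{1}-\tilde{i}$ for some $\tilde{i}\in s_p(N)$, for which $r_\ell=k-1-i_\ell$ and $\sum_\ell r_\ell=d(k-1)-N=M$ automatically; re-indexing the remaining sum over $\tilde{i}\in s_p(N)$ and pulling out the global nonzero scalar $1/a_{\tilde{\mu}}$ gives
\[
P=\frac{1}{a_{\tilde{\mu}}}\sum_{\tilde{i}\in s_p(N)}\binom{M}{k-1-i_1\:\cdots\:k-1-i_d}\,a_{\tilde{i}}\,{\tilde{t}}^{(k-1)\tilde{1}-\tilde{i}},
\]
which is the asserted expression up to the nonzero $K$-scalar $1/a_{\tilde{\mu}}$. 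I expect the only real obstacle to be bookkeeping: confirming that reduction of ${\tilde{x}}^{\tilde{r}}$ into the one-dimensional space $(R/I)_M$ is genuinely just multiplication by $c_{\tilde{r}}$ (so the multinomial coefficients survive unchanged), and keeping careful track of the single normalization constant coming from the chosen isomorphism $\soc(I)/I\cong K$.
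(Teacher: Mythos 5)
Your proposal is correct and follows essentially the same computational strategy as the paper: expand $(t_1\overline{x}_1+\cdots+t_d\overline{x}_d)^M$ by the multinomial theorem, use Claim~\ref{clm:ij} to determine exactly which monomials ${\tilde{x}}^{\tilde{r}}$ survive modulo $I$, and read off the surviving coefficients via~(\ref{eqn:mod-max}). The only organizational difference is that you compute the reduction scalar $c_{\tilde{r}}$ for each monomial directly in $R/I$ and then substitute, whereas the paper multiplies the whole expansion by $p$, works modulo $(x_1^k,\ldots,x_d^k)$, and then passes back to $\pmod{I}$ using~(\ref{eqn:socle-monomial}); both yield the same formula up to the scalar $1/a_{\tilde{\mu}}$ (which correctly matches the normalization ${\tilde{x}}^{(k-1)\tilde{1}-\tilde{\mu}}\mapsto 1$ fixed in Corollary~\ref{cor:IS-CLS}).
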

\begin{proof}
Let ${\tilde{x}}^{\tilde{j}}$ be a monomial of $R = K[x_1,\ldots,x_d]$ of 
degree $\Delta = d(k-1) - N$, so $\tilde{j}\in \Sigma(\Delta)$. Clearly 
${\tilde{x}}^{\tilde{j}}p \equiv 0 \pmod{(x_1^k,\ldots x_d^k)}$
if at least one $x_{\ell}\geq k$, or if each $x_{\ell}\leq k-1$ and 
$\tilde{j} + \tilde{i} \neq (k-1)\tilde{1}$ for each $\tilde{i}\in s_p(N)$
by~Claim~\ref{clm:ij}. Otherwise,
$\tilde{j} + \tilde{i} = (k-1)\tilde{1}$ for exactly one $\tilde{i}\in s_p(N)$
in which case we have 
\[
{\tilde{x}}^{\tilde{j}}p \equiv 
a_{(k-1)\tilde{1} - \tilde{j}}{\tilde{x}}^{(k-1)\tilde{1}}
\pmod{(x_1^k,\ldots x_d^k)}.
\]
Using the multinomial expansion on 
$(t_1\overline{x}_1 + \cdots + t_d\overline{x}_d)^{\Delta}$ first
and then secondly using (\ref{eqn:mod-max}) we then obtain 
\begin{eqnarray*}
\lefteqn{(t_1\overline{x}_1 + \cdots + t_d\overline{x}_d)^{\Delta}p } \\
& & 
\equiv\left(\sum_{\tilde{i}\in s_p(N)}
\binom{\Delta}{k-1-i_1 \:\cdots\: k-1-i_d}a_{\tilde{i}}{\tilde{t}}^{(k-1)\tilde{1} - \tilde{i}}\right)
      {\tilde{x}}^{(k-1)\tilde{1}} \pmod{(x_1^k,\ldots x_d^k)} \\
& &       
\equiv\left(\sum_{\tilde{i}\in s_p(N)}
\binom{\Delta}{k-1-i_1 \:\cdots\: k-1-i_d}{(\tilde{t}\tilde{x}})^{(k-1)\tilde{1} - \tilde{i}}\right)p \pmod{(x_1^k,\ldots x_d^k)}
\end{eqnarray*}
and so by definition of $I$ that
\[
(t_1\overline{x}_1 + \cdots + t_d\overline{x}_d)^{\Delta} \equiv 
\sum_{\tilde{i}\in s_p(N)}
\binom{\Delta}{k-1-i_1 \:\cdots\: k-1-i_d}({\tilde{t}\tilde{x}})^{(k-1)\tilde{1} - \tilde{i}}
\pmod{I}.
\]
By~(\ref{eqn:socle-monomial}) we then obtain, for any fixed $\tilde{\ell}\in s_p(N)$, that
$a_{\tilde{\ell}}{\tilde{x}}^{(k-1)\tilde{1} - \tilde{i}} \equiv
a_{\tilde{i}}{\tilde{x}}^{(k-1)\tilde{1} - \tilde{\ell}} \pmod{I}$ and hence,
by multiplying through the above equation by $a_{\tilde{\ell}}$, we obtain 
\[
a_{\tilde{\ell}}(t_1\overline{x}_1 + \cdots + t_d\overline{x}_d)^{\Delta} \equiv 
\left(\sum_{\tilde{i}\in s_p(N)}
\binom{\Delta}{k-1-i_1 \:\cdots\: k-1-i_d}a_{\tilde{i}}
{\tilde{t}}^{(k-1)\tilde{1} - \tilde{i}}\right)
{\tilde{x}}^{(k-1)\tilde{1} - \tilde{\ell}}\pmod{I}
\]
which completes our proof.
\end{proof}

For any polynomial $Q = Q(t_1,\ldots,t_d)\in K[t_1,\ldots,t_d]$ one can form
the ideal of $R = K[x_1,\ldots,x_d]$ as 
\begin{equation}
\label{eqn:ann-partial}
\ann_{\partial}(Q) := \left\{f\in K[x_1,\ldots,x_d] :
f\left(\frac{\partial}{\partial t_1},
\cdots,\frac{\partial}{\partial t_d}\right)Q(t_1,\ldots,t_d) = 0\right\}
\end{equation}
as the inverse ideal of the singleton set $M' = \{Q\}$, as for the specific 
polynomial $P$ in Corollary~\ref{cor:IS-CLS} (and equivalent to
$(0:_S M)$ where $M$ is the $S$-module generated by $Q$ from~\cite[Thm 21.6]{Eisenbud}.) 
Given $k$ and the Krull dimension $d$ of $R$, then for any homogeneous polynomial 
$p = \sum_{\tilde{i}\in s_p(N)}a_{\tilde{i}}{\tilde{x}}^{\tilde{i}}$
of degree $N$,
where $a_{\tilde{i}} \neq 0$ for each $\tilde{i}\in s_p(N)$, we can 
always define the {\em $d;k$-antipodal polynomial} 
$p_{d;k}^{{{\blacktriangle}}}\in R$ from Proposition~\ref{prp:P} in 
terms of $x_1,\ldots,x_d$ by
\begin{equation}
\label{eqn:antipodal}
p_{d;k}^{{{\blacktriangle}}} = 
\sum_{\tilde{i}\in s_p(N)}
\binom{\Delta}{k-1-i_1 \:\cdots\: k-1-i_d}
a_{\tilde{i}}{\tilde{x}}^{(k-1)\tilde{1} - \tilde{i}}.
\end{equation}
We note that $p_{d;k}^{{{\blacktriangle}}}$ is homogeneous of degree $\Delta = d(k-1) - N$.
By Proposition~\ref{prp:P} we have then for any 
homogeneous, zero-dimensional, Gorenstein ideal 
$((x_1^k,\ldots,x_d^k):p)\subseteq R$ that
\begin{equation}
\label{eqn:Gorenstein-ann}
((x_1^k,\ldots,x_d^k):p) = \ann_{\partial}(p_{d;k}^{{{\blacktriangle}}}).
\end{equation}
Note that if $I$ is a zero-dimensional Gorenstein monomial ideal,
then we already had by Observation~\ref{obs:I=J} that 
$I = \ann_{\partial}(q)$, where $q$ is the unique monomial generator  
of $\soc(I)$ modulo the ideal $I$. What we have now by (\ref{eqn:Gorenstein-ann})
is the stronger if-and-only-if statement when $K$ is of characteristic zero.
Recall that if $I\subseteq R$ is a Gorenstein ideal, then $\soc(I)/I$ is always
a one dimensional vector space over $K$, and so $\soc(I) \pmod I$ contains a unique 
element up to scaling by $K^\times$.
\begin{corollary}
\label{cor:ann-iff-monomial}
If our ground field $K$ has characteristic zero, then
for any homogeneous, zero-dimensional, Gorenstein ideal $I$ of 
$R = K[x_1,\ldots,x_d]$ we have that 
$I = \ann_{\partial}(q)$, where $q$ is the unique generator 
of $\soc(I) \pmod{I}$ up to $K$-scaling, if and only if $I$ is a monomial ideal. 
\end{corollary}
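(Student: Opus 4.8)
The plan is to prove both implications of Corollary~\ref{cor:ann-iff-monomial}, leaning entirely on the machinery built up in this section. For the ``only if'' direction --- the genuinely new content --- I would argue by contrapositive: assume $I = ((x_1^k,\ldots,x_d^k):p)$ is \emph{not} monomial and show $I \neq \ann_{\partial}(q)$ for the unique monomial generator $q$ of $\soc(I)\pmod I$. By~(\ref{eqn:Gorenstein-ann}) we know $I = \ann_{\partial}(p_{d;k}^{\vartriangle})$, where by Proposition~\ref{prp:P} and~(\ref{eqn:antipodal}) the antipodal polynomial is $p_{d;k}^{\vartriangle} = \sum_{\tilde{i}\in s_p(N)}\binom{M}{k-1-i_1\,\cdots\,k-1-i_d}a_{\tilde{i}}{\tilde{x}}^{(k-1)\tilde{1}-\tilde{i}}$. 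The key observation is a dictionary between the shape of $p$ and the shape of $p_{d;k}^{\vartriangle}$: each has exactly $|s_p(N)|$ nonzero terms, because in characteristic zero every multinomial coefficient $\binom{M}{k-1-i_1\,\cdots\,k-1-i_d}$ is a nonzero element of $K$ (all its arguments $k-1-i_\ell$ are nonnegative since we normalized $\tilde{i}\leq(k-1)\tilde{1}$, and they sum to $M$). So $|s_p(N)| = 1$ if and only if $p_{d;k}^{\vartriangle}$ is a single monomial.

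Next I would connect ``$I$ is monomial'' to ``$|s_p(N)| = 1$''. If $|s_p(N)| = 1$, say $p = a{\tilde{x}}^{\tilde{i}}$, then $I = ((x_1^k,\ldots,x_d^k):{\tilde{x}}^{\tilde{i}})$ is manifestly a monomial ideal (it is spanned by the monomials ${\tilde{x}}^{\tilde{j}}$ with ${\tilde{x}}^{\tilde{j}+\tilde{i}} \in (x_1^k,\ldots,x_d^k)$, i.e.\ with $j_\ell + i_\ell \geq k$ for some $\ell$). Conversely, suppose $|s_p(N)| \geq 2$; I want $I$ non-monomial. Pick two distinct $\tilde{i}, \tilde{j} \in s_p(N)$. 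By~(\ref{eqn:socle-monomial}) we have $a_{\tilde{i}}{\tilde{x}}^{(k-1)\tilde{1}-\tilde{j}} - a_{\tilde{j}}{\tilde{x}}^{(k-1)\tilde{1}-\tilde{i}} \in I$, while by Claim~\ref{clm:1-elt-socle} neither monomial ${\tilde{x}}^{(k-1)\tilde{1}-\tilde{i}}$ nor ${\tilde{x}}^{(k-1)\tilde{1}-\tilde{j}}$ lies in $I$. Thus $I$ contains a binomial whose two monomials are \emph{not} individually in $I$, so $I$ cannot be a monomial ideal. Combining, $I$ is monomial $\iff |s_p(N)| = 1 \iff p_{d;k}^{\vartriangle}$ is a monomial.

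Finally I assemble the equivalence. For the ``if'' direction: if $I$ is monomial and zero-dimensional Gorenstein, Observation~\ref{obs:I=J} (together with the explicit socle description in Observation~\ref{obs:soc-monomial}, which here gives $\doc(I) = \{{\tilde{x}}^{(k-1)\tilde{1}-\tilde{\mu}}\}$ as a genuine element of the docle since $I$ is monomial) already shows $I = \ann_{\partial}(q)$ with $q$ the monomial corresponding to that socle generator; indeed by the above $p_{d;k}^{\vartriangle}$ is itself (a scalar times) that very monomial. For the ``only if'' direction: suppose $I = \ann_{\partial}(q)$ with $q$ the unique monomial generator of $\soc(I)\pmod I$. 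By~(\ref{eqn:Gorenstein-ann}), $\ann_{\partial}(p_{d;k}^{\vartriangle}) = I = \ann_{\partial}(q)$; since in characteristic zero $\ann_{\partial}$ is injective on $K$-spans of single polynomials up to scalar (this is the classical Macaulay--Matlis duality invoked via Corollary~\ref{cor:IS-CLS} / \cite[Thm 21.6]{Eisenbud}, and it is precisely where characteristic zero is used --- it is also the reason the hypothesis appears), we conclude $p_{d;k}^{\vartriangle}$ is a scalar multiple of the monomial $q$, hence $|s_p(N)| = 1$, hence $I$ is monomial by the previous paragraph.

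The main obstacle is the injectivity step: rigorously pinning down that two singleton inverse ideals $\ann_{\partial}(Q_1) = \ann_{\partial}(Q_2)$ force $Q_1, Q_2$ to be scalar multiples of one another (in the homogeneous setting, in characteristic zero). One clean route is to note that $\ann_{\partial}(Q)$ determines the $R$-submodule $R\circ Q$ of $S$ by double-annihilator (Corollary~\ref{cor:X-inv-inv}-type reasoning, valid here because $\ann_{\partial}(Q)$ is zero-dimensional Gorenstein), that $R\circ Q$ is cyclic generated by $Q$ in top degree $M$, and that its degree-$M$ piece is the one-dimensional span of $Q$; equality of the modules then forces proportionality of the generators. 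An alternative, more hands-on route avoids duality entirely: compare the two descriptions $I = \ann_{\partial}(p_{d;k}^{\vartriangle})$ and (the hypothesis) $I = \ann_{\partial}(q)$ degree by degree, using that $\soc(I)/I$ is one-dimensional and spanned by the class of $q$, to show directly that $p_{d;k}^{\vartriangle}$ and $q$ act identically on $R$ and hence agree up to scalar. Either way, this is the only place where real care --- and the characteristic-zero hypothesis --- is needed; everything else is bookkeeping with the already-established Claims and Observations.
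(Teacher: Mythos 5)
Your ``if'' direction is fine and matches the paper: Observation~\ref{obs:I=J} applied to $M = \doc(I) = \{q\}$ gives $I = \ann(\{q'\}) = \ann_{\partial}(q)$ when $I$ is monomial. The issue is the ``only if'' direction, where you take a considerably longer route than needed and leave a genuine gap.

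You argue by contrapositive: $I$ non-monomial $\Rightarrow |s_p(N)| \geq 2 \Rightarrow p_{d;k}^{\vartriangle}$ is not a monomial $\Rightarrow I \neq \ann_{\partial}(q)$. The last implication is where the gap sits: you need that $\ann_{\partial}(p_{d;k}^{\vartriangle}) = \ann_{\partial}(q)$ forces $p_{d;k}^{\vartriangle}$ and $q$ to be proportional, and you correctly flag this ``injectivity step'' as the main obstacle without closing it. The sketches you offer for closing it are plausible but not carried out --- in particular the ``Corollary~\ref{cor:X-inv-inv}-type'' double-annihilator reasoning is only established in the paper for \emph{monomial} sets, so invoking it for arbitrary homogeneous $Q$ would require importing the general Macaulay--Matlis duality, which the corollary was arguably trying to avoid. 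Also, your intermediate step that $|s_p(N)| \geq 2$ forces $I$ non-monomial relies on~(\ref{eqn:socle-monomial}) and Claim~\ref{clm:1-elt-socle}, which is correct but is again extra work.

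A much shorter argument is available for the ``only if'' direction, and it is the one the paper's text points toward: since $q$ is a \emph{monomial}, Observation~\ref{obs:I=J} (resting on Lemma~\ref{lmm:ann-mon=mon}, where the characteristic-zero hypothesis is actually used) says that $\ann_{\partial}(q) = \ann(\{q'\})$ is itself a zero-dimensional \emph{monomial} ideal --- namely the unique one with docle $\{q\}$. So the hypothesis $I = \ann_{\partial}(q)$ immediately forces $I$ to be a monomial ideal; no contrapositive, no antipodal polynomial, no injectivity of $\ann_{\partial}$ required. Equation~(\ref{eqn:Gorenstein-ann}) plays a framing role (it explains \emph{which} polynomial yields $I$ when $I$ is not monomial), but it is not needed to close the ``only if'' direction. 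Your approach does buy something the direct argument does not --- the explicit equivalence ``$p_{d;k}^{\vartriangle}$ is a monomial $\iff |s_p(N)| = 1 \iff I$ is monomial,'' which is a nice observation --- but as a proof of the corollary it both overshoots and, as written, does not fully land.
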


\section{A more universal presentation}
\label{sec:universal}

In this short section we present a few observation about the form of the
polynomial $P$ in Corollary~\ref{cor:IS-CLS}. Namely, the same
conclusion can be achieved with a more universal form for the 
polynomial $P$.

As before, let $R = K[x_1,\ldots,x_d]$ be the polynomial ring in $d$ variables,
and consider a homogeneous zero dimensional ideal $I$ of $R$ (not
necessarily Gorenstein as in the previous section).
As in Corollary~\ref{cor:IS-CLS}, let $t_1,\ldots,t_d$ be indeterminates.
Let $\overline{x}_i$ denote the coset of $x_i$ in $R/I$ for each $i$ and
$g\in R$ be a homogeneous polynomial of degree $N$. For any polynomial
$f\in K[z]$
we readily obtain that
\begin{equation}
  \label{eqn:gen-diff}
g\left(\frac{\partial}{\partial t_1},
\cdots,\frac{\partial}{\partial t_d}\right)f(t_1\overline{x}_1 + \cdots + t_d\overline{x}_d)
=
f^{(N)}(t_1\overline{x}_1 + \cdots + t_d\overline{x}_d)
g(\overline{x}_1,\ldots,\overline{x}_d),
\end{equation}
where $f^{(N)}$ denotes the $N$-th derivative of $f\in K[z]$ as a polynomial
in one variable $z$. For any such polynomial $f\in K[z]$ we therefore have
that
\begin{equation}
  \label{eqn:diff-includ}
I\subseteq \left\{g\in K[x_1,\ldots,x_d] :
g\left(\frac{\partial}{\partial t_1},
\cdots,\frac{\partial}{\partial t_d}\right)f(t_1\overline{x}_1 + \cdots + t_d\overline{x}_d) = 0\right\}.
\end{equation}
Since our ideal $I$ of $R$ is homogeneous and zero-dimensional, 
the ring $R/I$ is finite dimensional as a vector space over $K$ and 
$R/I = \bigoplus_{i=0}^{\Delta}(R/I)_i$ is graded. Thus,
$(t_1\overline{x}_1 + \cdots + t_d\overline{x}_d)^n = 0$ in 
$\overline{R} = K[t_1,\ldots,t_d]\otimes_K R/I$ for any $n > {\Delta}$.
Consequently, if $f(z) = \sum_{n\geq 0}a_nz^n \in K[[z]]$
is a formal power series over $K$, then 
$f(t_1\overline{x}_1 + \cdots + t_d\overline{x}_d) = \sum_{n = 0}^{\Delta} 
a_n(t_1\overline{x}_1 + \cdots + t_d\overline{x}_d)^n \in \overline{R}$.

Now we consider the case $n\leq {\Delta}$. As
$R/I = \bigoplus_{i=0}^{\Delta}(R/I)_i$ is graded
with maximum degree of $\Delta$ (and contains a nonzero element of degree $\Delta$), 
 not all monomials in in $\overline{x}_1,\ldots,\overline{x}_d$ can be zero,
and hence $(t_1\overline{x}_1 + \cdots + t_d\overline{x}_d)^n \neq 0$ in 
$\overline{R}$. Together with (\ref{eqn:diff-includ}), we gather these 
intermediate observations in the following.
\begin{observation}
\label{obs:zero-not}
If $I$ is a homogeneous zero dimensional ideal of $R = K[x_1,\ldots,x_d]$, 
and therefore $R/I = \bigoplus_{i=0}^{\Delta}(R/I)_i$ is a graded
$K$-algebra with 
$\Delta\in\nats$ as the maximum degree among its homogeneous elements,  
$(t_1\overline{x}_1 + \cdots + t_d\overline{x}_d)^n \neq 0$
in $\overline{R} = K[t_1,\ldots,t_d]\otimes_K R/I$ if and only if $n\leq {\Delta}$.

In particular, for any formal power series
$f(z) = \sum_{n\geq 0}a_nz^n \in K[[z]]$,
we therefore have that
$f(t_1\overline{x}_1 + \cdots + t_d\overline{x}_d) \in \overline{R}$,
and further $I\subseteq \ann_{\partial}(f)$ for any such formal power series,
where the differential operator is applied in the obvious way.
\end{observation}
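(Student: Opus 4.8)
The plan is to work with the element $\theta := t_1\overline{x}_1 + \cdots + t_d\overline{x}_d \in \overline{R}$ and to read off the vanishing of its powers straight from the multinomial expansion. I would first note that $\overline{R} = K[t_1,\ldots,t_d]\otimes_K R/I$ splits as $\bigoplus_{\tilde{a}}\,\tilde{t}^{\tilde{a}}\otimes_K (R/I)$, the sum running over the monomials $\tilde{t}^{\tilde{a}}$ of $K[t_1,\ldots,t_d]$. The multinomial theorem then gives
\[
\theta^n = \sum_{\tilde{a}\in\Sigma(n)}\binom{n}{a_1\:\cdots\:a_d}\,\tilde{t}^{\tilde{a}}\otimes\overline{x}_1^{a_1}\cdots\overline{x}_d^{a_d},
\]
and since its summands sit in pairwise distinct direct summands, $\theta^n=0$ if and only if $\binom{n}{a_1\:\cdots\:a_d}\overline{x}_1^{a_1}\cdots\overline{x}_d^{a_d}=0$ in $R/I$ for every $\tilde{a}\in\Sigma(n)$. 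As $\ch K = 0$, those multinomial coefficients are nonzero scalars, so this amounts to the vanishing of all degree-$n$ monomials of $R/I$, i.e.\ to $(R/I)_n=0$.

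Thus the desired equivalence ``$\theta^n\neq 0 \Leftrightarrow n\leq M$'' reduces to the statement that $(R/I)_n\neq 0$ for precisely the degrees $0\leq n\leq M$. For $n>M$ this is the defining property of $M$. For $0\leq n\leq M$ I would invoke that $R/I$ is \emph{standard} graded: $(R/I)_0=K$ and $R/I$ is generated over it by $\overline{x}_1,\ldots,\overline{x}_d\in(R/I)_1$, so $(R/I)_{m+1}=(R/I)_1\cdot(R/I)_m$ for every $m$; consequently, if $(R/I)_n$ were zero for some $n\leq M$, then $(R/I)_m$ would be zero for all $m\geq n$, contradicting $(R/I)_M\neq 0$. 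This proves the first assertion.

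For the ``in particular'' clause, let $f(z)=\sum_{n\geq 0}a_nz^n\in K[[z]]$. By the first assertion $\theta^n=0$ for all $n>M$, so $f(\theta):=\sum_{n\geq 0}a_n\theta^n$ is really the finite sum $\sum_{n=0}^{M}a_n\theta^n$ and hence a well-defined element of $\overline{R}$; equivalently $f(\theta)=\tilde{f}(\theta)$ for the polynomial truncation $\tilde{f}:=\sum_{n=0}^{M}a_nz^n\in K[z]$. Interpreting $\ann_\partial(f)$ as $\{g\in R : g(\partial/\partial t_1,\ldots,\partial/\partial t_d)f(\theta)=0 \text{ in } \overline{R}\}$ (the ``obvious way'' of the statement, matching (\ref{eqn:diff-includ})), we get $\ann_\partial(f)=\ann_\partial(\tilde{f})$, and (\ref{eqn:diff-includ}) applied to the polynomial $\tilde{f}$ gives $I\subseteq\ann_\partial(\tilde{f})=\ann_\partial(f)$. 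If one prefers to avoid quoting (\ref{eqn:diff-includ}), one can argue directly from (\ref{eqn:gen-diff}): for homogeneous $g\in I$ of degree $N$ it yields $g(\partial/\partial t_1,\ldots,\partial/\partial t_d)f(\theta)=f^{(N)}(\theta)\,g(\overline{x}_1,\ldots,\overline{x}_d)=0$; since $\ann_\partial(f)$ is an ideal (the operators $g\mapsto g(\partial/\partial t_1,\ldots,\partial/\partial t_d)$ compose) and $I$ is generated by its homogeneous elements, $I\subseteq\ann_\partial(f)$.

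The only step carrying real content is the standard-gradedness argument of the second paragraph: one must make sure $(R/I)_n\neq 0$ throughout the whole range $0\leq n\leq M$, and not only at the top degree $n=M$, where non-vanishing is immediate. Everything else --- the multinomial expansion, the $K$-linear independence of the monomials in the $t_i$, and the collapse of the power series to a finite sum --- is routine bookkeeping.
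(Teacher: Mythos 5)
Your proof is correct and follows essentially the same route the paper takes: both reduce the nonvanishing of $\theta^n = (t_1\overline{x}_1 + \cdots + t_d\overline{x}_d)^n$ to the nonvanishing of $(R/I)_n$ via the multinomial expansion in the $t_i$ (using that $K$ has characteristic zero so the multinomial coefficients survive), truncate $f(\theta)$ to a finite sum using $\theta^n = 0$ for $n > M$, and invoke \eqref{eqn:diff-includ} to conclude $I\subseteq\ann_\partial(f)$. The one place you add genuine detail is the standard-gradedness argument guaranteeing $(R/I)_n\neq 0$ for all $0\leq n\leq M$, a step the paper compresses into the terse remark that ``not all monomials in $\overline{x}_1,\ldots,\overline{x}_d$ can be zero.''
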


In the case $n\leq {\Delta}$, we note further that if $n < N$ then, 
since $g$ is homogeneous of degree $N$, we have by (\ref{eqn:gen-diff}) that
\[
g\left(\frac{\partial}{\partial t_1},
\cdots,\frac{\partial}{\partial t_d}\right)(t_1\overline{x}_1 + \cdots + t_d\overline{x}_d)^n
= 0,
\]
and if $N\leq n\leq \Delta$, then likewise we get by (\ref{eqn:gen-diff}) that
\begin{eqnarray*}
\lefteqn{g\left(\frac{\partial}{\partial t_1},
\cdots,\frac{\partial}{\partial t_d}\right)(t_1\overline{x}_1 + \cdots + t_d\overline{x}_d)^n } \\
& & = \frac{n!}{(n-N)!}(t_1\overline{x}_1 + \cdots + t_d\overline{x}_d)^{n-N}
g(\overline{x}_1,\ldots,\overline{x}_d) \in K[t_1,\ldots,t_d]\otimes_K R_n\subseteq \overline{R},
\end{eqnarray*}
a polynomial in $\overline{x}_1,\ldots,\overline{x}_d$ of degree $n$ with coefficients 
in $K[t_1,\ldots,t_d]$.

Therefore, for any formal power series $f(z) = \sum_{n\geq 0}a_nz^n \in K[[z]]$,
we have by (\ref{eqn:gen-diff}) and Observation~\ref{obs:zero-not} that
\begin{eqnarray*}
\lefteqn{g\left(\frac{\partial}{\partial t_1},
\cdots,\frac{\partial}{\partial t_d}\right)f(t_1\overline{x}_1 + \cdots + t_d\overline{x}_d) } \\
 & & = f^{(N)}(t_1\overline{x}_1 + \cdots + t_d\overline{x}_d)
g(\overline{x}_1,\ldots,\overline{x}_d) \\ 
 & & = \left(\sum_{n\geq N}a_n\frac{n!}{(n-N)!}(t_1\overline{x}_1 + \cdots + t_d\overline{x}_d)^{n-N}\right)
 g(\overline{x}_1,\ldots,\overline{x}_d) \\
 & & = \sum_{n\geq N}^{\Delta}\left(a_n\frac{n!}{(n-N)!}(t_1\overline{x}_1 + \cdots + t_d\overline{x}_d)^{n-N}
 g(\overline{x}_1,\ldots,\overline{x}_d)\right),
 \end{eqnarray*}
which equals zero in $\overline{R}$ if and only if each summand equals zero. 
Suppose now that $I$ is Gorenstein. In this case, the last summand of degree $\Delta$ in 
$\overline{x}_1,\ldots,\overline{x}_d$ must be zero as well. Since $I$ is Gorenstein
the $\Delta^{\text{th}}$ summand $K[t_1,\ldots,t_d]\otimes_K(R/I)_{\Delta}$ is a cyclic 
$K[t_1,\ldots,t_d]$-module generated by the unique monomial in the socle of $R/I$,
as in Corollary~\ref{cor:IS-CLS}. As this last summand is zero if and only if
its coefficient in $K[t_1,\ldots,t_d]$ is zero, we have the latter
equality of Corollary~\ref{cor:IS-CLS}. In summary, the polynomial $P = P(t_1,\ldots,t_d)$
in Corollary~\ref{cor:IS-CLS} can be replaced by $f(t_1\overline{x}_1 + \cdots + t_d\overline{x}_d)$,
where $f(z) = \sum_{n\geq 0}a_nz^n \in K[[z]]$ is any formal power series with non-zero coefficients. 
We gather this in the following final Proposition.
\begin{proposition}
\label{prp:any-f}
Let $I$ be a homogeneous zero-dimensional Gorenstein ideal
of $R$. Let $f(z) = \sum_{n\geq 0}a_nz^n \in K[[z]]$ be a
a formal power series with non-zero coefficients and 
$t_1,\ldots,t_d$ be indeterminates. In this case we have 
equality in (\ref{eqn:diff-includ}), that is 
\[
I = \left\{g\in K[x_1,\ldots,x_d] :
g\left(\frac{\partial}{\partial t_1},
\cdots,\frac{\partial}{\partial t_d}\right)f(t_1\overline{x}_1 + \cdots + t_d\overline{x}_d) = 0\right\}.
\]
\end{proposition}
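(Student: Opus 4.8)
The plan is to prove the two inclusions separately, reducing the nontrivial one to the single-power case already settled in Corollary~\ref{cor:IS-CLS}. The inclusion $I \subseteq \{g \in R : g(\partial/\partial t_1, \ldots, \partial/\partial t_d)f(t_1\overline{x}_1 + \cdots + t_d\overline{x}_d) = 0\}$ is already available: it is precisely the last assertion of Observation~\ref{obs:zero-not}, which is valid for an arbitrary formal power series $f$. So the only thing left to establish is the reverse inclusion.

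For the reverse inclusion, write $\ell := t_1\overline{x}_1 + \cdots + t_d\overline{x}_d$ in $\overline{R} = K[t_1,\ldots,t_d]\otimes_K R/I$, and let $M$ be the top degree of the graded ring $R/I$. First I would record, using Observation~\ref{obs:zero-not}, that $\ell^n = 0$ in $\overline{R}$ whenever $n > M$, so that $f(\ell) = \sum_{n=0}^{M} a_n\ell^n$; since each $\ell^n$ is homogeneous of degree $n$ in the $\overline{x}_i$, the graded component of $f(\ell)$ lying in $K[t_1,\ldots,t_d]\otimes_K (R/I)_M$ equals $a_M\ell^M$. Now suppose $g \in R$ satisfies $g(\partial/\partial t_1,\ldots,\partial/\partial t_d)f(\ell) = 0$ in $\overline{R}$. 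The operator $g(\partial/\partial t_1,\ldots,\partial/\partial t_d)$ acts only on the $K[t_1,\ldots,t_d]$-factor of $\overline{R}$ and so preserves the $R/I$-grading; extracting the $K[t_1,\ldots,t_d]\otimes_K(R/I)_M$-component of this relation therefore yields $a_M\, g(\partial/\partial t_1,\ldots,\partial/\partial t_d)\ell^M = 0$, and since $a_M$ is a nonzero scalar in $K$ we obtain $g(\partial/\partial t_1,\ldots,\partial/\partial t_d)\ell^M = 0$. Finally, since every homogeneous zero-dimensional Gorenstein ideal has the form $((x_1^k,\ldots,x_d^k):p)$ required by Observation~\ref{obs:bi-linear}, with $N = \deg p$ and $M = d(k-1) - N$ its top degree, the element $\ell^M = (t_1\overline{x}_1 + \cdots + t_d\overline{x}_d)^M$ is exactly the polynomial $P$ of Corollary~\ref{cor:IS-CLS}; hence $g \in I$ by that corollary, completing the argument.

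I expect the only delicate point to be the bookkeeping in the middle step: one must check that the differential operator $g(\partial/\partial t_1,\ldots,\partial/\partial t_d)$ commutes with the projection of $\overline{R}$ onto its graded piece $K[t_1,\ldots,t_d]\otimes_K(R/I)_M$ — which it does because the operator involves only the $t$-variables — so that one may legitimately isolate the top component (in the $\overline{x}_i$) of the relation $g(\partial/\partial t)f(\ell) = 0$ and identify it, up to the nonzero factor $a_M$, with $g(\partial/\partial t)$ applied to the single polynomial of Corollary~\ref{cor:IS-CLS}. The truncation $f(\ell) = \sum_{n \le M} a_n\ell^n$, the identification of its top component as $a_M\ell^M$, and the cancellation of the scalar $a_M$ are all immediate. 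As elsewhere in this section, $\mathrm{char}\,K = 0$ is in force; one may note in passing that the only property of $f$ actually used here is $a_M \neq 0$, which is weaker than the stated hypothesis that every $a_n$ be nonzero.
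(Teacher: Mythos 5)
Your proof is correct and follows the same strategy as the paper's own argument: use Observation~\ref{obs:zero-not} for the easy inclusion, then for the converse isolate the degree-$M$ piece (in the $\overline{x}$-grading) of $g(\partial/\partial t)f(\ell)$, which is $a_M\,g(\partial/\partial t)\ell^M$, and reduce to Corollary~\ref{cor:IS-CLS}. Your version is a touch cleaner than the text preceding the proposition — it bypasses the explicit expansion via~(\ref{eqn:gen-diff}) and the implicit reduction to homogeneous $g$, and you correctly note in passing that only $a_M \neq 0$ (rather than all $a_n \neq 0$) is actually used.
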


\begin{example}
Since $K$ has characteristic zero, the analytic complex functions $1/(1-z)$,
$e^z$ and $\sqrt{1+z}$ all have power series expansions with non-zero coefficients 
and would therefore work well as a candidate for our formal power series 
$f(z)$ in Proposition~\ref{prp:any-f}.
\end{example}

\section{Summary and further questions}
\label{sec:some-q}

We briefly discuss some of the main results in this article
and pose some relevant and motivating questions.

The novel contributions of this article are from 
Sections~\ref{sec:socle-IS}--\ref{sec:universal}. 
In Section~\ref{sec:socle-IS}, we first showed in an elementary combinatorial way 
 that every zero-dimensional monomial ideal $I$
can be given as the inverse 
ideal of the monomials contained in the docle $\doc(I)$ (Observation~\ref{obs:I=J}). This is a property 
unique to monomial ideals. We then derived in a combinatorial way the
double inverse system and double ideal in Corollary~\ref{cor:X-inv-inv}.
These double inverse operators are clearly closure operators. 

In Section~\ref{sec:closure-op}, we defined a new operator that is a closure 
operator with respect to a special partial order on the set of upsets of a poset, 
in Corollaries~\ref{cor:closure} and~\ref{cor:zero-dim=closed}.

In Section~\ref{sec:alg-observations}, we proved algebraically that 
zero-dimensional monomial ideals $I\subseteq J$ with $\doc(I)\subseteq\doc(J)$ 
must be equal (Proposition~\ref{prp:idealanddoc}). As a corollary, we then 
obtained an analogous statement for cofinite upsets in a poset in 
Corollary~\ref{cor:poset-idealanddoc}. The other main algebraic result of this section
is Theorem~\ref{thm:expandtomprimary} which allows us, together 
with~\cite[Theorem 1.1]{A-Epstein}, to present any upset $U$ with a nontrivial
$\doc(U)$ uniquely as an intersection $U = V\cap W$ of upsets where (a) 
$\doc(W) = \emptyset$ and (b) $V$ is cofinite. We then noted that this mentioned
uniqueness does not hold in the local case. 

The main result of Section~\ref{sec:Gor-inv} is Proposition~\ref{prp:P},
which was obtained by explicit computations with respect to the term order LEX.
This proposition justified us in defining the $d;k$-antipodal polynomial
$p_{d;k}^{{{\blacktriangle}}}$ of a homogeneous polynomial $p$, which is a monomial if and only if $p$ is a monomial. This made
it possible to present the if-and-only-if statement we sought for 
homogeneous zero-dimensional Gorenstein ideals in 
Corollary~\ref{cor:ann-iff-monomial}. 

Section~\ref{sec:universal}, the final contributing section of the paper, 
contains Proposition~\ref{prp:any-f}, which 
demonstrates that the polynomial $P$ from Corollary~\ref{cor:IS-CLS} can
be chosen rather freely. In other words, our homogeneous zero-dimensional Gorenstein ideal
has a more universal or general form in terms of analytic functions.

There are numerous questions left unanswered that would be good to resolve
at some point. As mentioned, Corollary~\ref{cor:ann-iff-monomial} only 
deals with the Gorenstein case. A natural question is therefore whether this
is a defining property of monomial ideals, namely the following question.
\begin{question}
\label{qst:non-Gorenstein}
Is it true that whenever we have a zero-dimensional non-Gorenstein ideal $I$ 
with $\soc(I)/I$ containing at least two elements, that $I$ is the inverse ideal 
of the elements of $\soc(I)/I$ if and only if $I$ is monomial?
\end{question}
The above question is also of interest if we restrict to homogeneous ideals.
\begin{question}
\label{qst:toric-ideal}
Is there an explicit and descriptive way to describe the socle of (homogeneous)
zero-dimensional toric ideals generated by binomials? Perhaps by the use of
a Gr\"{o}bner basis with respect to LEX, or some other well-chosen term order on 
$[x_1,\ldots,x_d]$?
\end{question}
As we have seen, we have been able to derive algebraic properties of monomial ideals
using purely combinatorial arguments on posets, their upsets and downsets and antichains.
Likewise we saw in Section~\ref{sec:alg-observations} that we were able to obtain
combinatorial results from algebraic ones using natural homomorphisms etc. 
\begin{question}
\label{qst:comb-proofs}
Is the class of monomial ideals the {\em only} large class of ideals of
$R = K[x_1,\ldots,x_d]$ where one can hope to be able to use combinatorial
arguments to obtain some interesting results about the socle and the docle $\doc(I)$
of an ideal? 
\end{question}
This last question might have a negative answer if Question~\ref{qst:toric-ideal} has
a positive answer. That is, toric ideals might be a class of ideals where one could
potentially derive many properties about the socle in purely combinatorial ways. 
Toric ideals have a rich connection to polytopes and possess a rich
combinatorial structure. Could this connection perhaps to polytopes be utilized somehow for
the socle of toric ideals?

\subsection*{Acknowledgments}  
Sincere thanks to the anonymous referees for their numerous excellent and pointed
suggestions to improve the paper. In particular the exposition of the paper has improved 
considerably by their help. Thanks to Justin Chen for pointing us to resources on 
primary decomposition over multigraded rings.

\bibliographystyle{amsalpha}
\bibliography{IS-Socles-arXiv-v2}

\end{document}